\documentclass[10pt]{amsproc}

\usepackage[all]{xy}

\usepackage[initials]{amsrefs}
\usepackage{amssymb}
\usepackage{setspace}
\usepackage{array}


\newtheorem{theorem}{Theorem}[section]
\newtheorem{lemma}[theorem]{Lemma}
\newtheorem{proposition}[theorem]{Proposition}
\newtheorem{corollary}[theorem]{Corollary}
\newtheorem{definition}[theorem]{Definition}

\newtheorem{setup}[theorem]{Setup}

\newtheorem{remark}[theorem]{Remark}

\newcommand{\bG}{{\mathbf G}}
\newcommand{\bB}{{\mathbf B}}
\newcommand{\cH}{{\mathcal H}}
\newcommand{\cF}{\mathcal F}

\newcommand{\F}{{\mathbb F}}
\newcommand{\R}{{\mathbb R}}

\newcommand{\C}{{\mathbb C}}
\newcommand{\Z}{{\mathbb Z}}
\newcommand{\N}{{\mathbb N}}
\newcommand{\bs}{\backslash}
\newcommand{\tr}{\operatorname{tr}}
\newcommand{\id}{\operatorname{id}}
\newcommand{\diag}{\operatorname{diag}}
\newcommand{\sub}{\operatorname{Sub}}
\newcommand{\irs}{\operatorname{IRS}}
\newcommand{\covol}{\operatorname{covol}}
\newcommand{\tor}{\operatorname{Tor}}
\newcommand{\betti}{\beta^{(2)}}


\numberwithin{equation}{section}

\title[$L^2$-Betti numbers of totally disconnected groups]{$L^2$-Betti numbers of totally disconnected groups and their approximation by Betti numbers of lattices}

\author{Henrik Densing Petersen}
\address{H.D.P., EGG, EPFL, CH-1015 Lausanne, Switzerland}
\email{henrik.petersen@epfl.ch}

\author{Roman Sauer}
\address{R.S., IAG, AG Topologie, KIT, 76131 Karlsruhe, Germany }
\email{roman.sauer@kit.edu}

\author{Andreas Thom}
\address{A.T., Institut f\"ur Geometrie, TU Dresden, 01062 Dresden, Germany}
\email{andreas.thom@tu-dresden.de}

\begin{document}

\onehalfspace

\begin{abstract}
The main result is a general approximation theorem for normalised Betti numbers for Farber sequences of lattices in totally disconnected groups. Further, we contribute to the general theory of $L^2$-Betti numbers of totally disconnected groups and provide exact computations of the $L^2$-Betti numbers of the Neretin group and Chevalley groups over the field of Laurent series over a finite field and their lattices. 
\end{abstract}

\maketitle
\setcounter{tocdepth}{1}
\tableofcontents

\section{Introduction}

The study of asymptotics of Betti numbers of sequences of lattices with increasing covolume in locally compact groups has a long history. In this article, we prove a general convergence result that holds for any Farber chain in any unimodular, totally disconnected group. Moreover, we show that the limit can be identified in a natural way with an $L^2$-invariant associated with the locally compact group. 

The first instances of such $L^2$-invariants for non-discrete locally compact groups 
are Gaboriau's first $L^2$-Betti number of a unimodular graph~\cite{MR2221157}, which is essentially one of its automorphism group, and the $L^2$-Betti numbers of buildings in the work of  
Dymara~\cite{dymara} and Davis-Dymara-Januszkiewicz-Okun~\cites{davisetal}, which are essentially ones of the automorphism groups of the buildings. 

 In his PhD thesis~\cite{Pe11} the first named author developed a homological theory of $L^2$-Betti numbers that applies to all unimodular, locally compact groups and generalises the one by 
L\"uck~\cite{lueck-book}*{Chapter~6} for the discrete case, see also \cites{MR123, MR3158720}. Computations reduce to the discrete case whenever the locally compact group~$G$ possesses a lattice~$\Gamma$ as in this case the relation between the $L^2$-Betti numbers 
is 
\begin{equation}\label{eq: lattice and G l2 Betti}
\beta_{k}^{(2)}(G,\nu) = \frac{\beta_{k}^{(2)}(\Gamma)}{{\rm covol}_{\nu}(\Gamma)}.
\end{equation}
This result is proved by Kyed-Petersen-Vaes~\cite{kyed+petersen+vaes}, the difficult case being non-uniform lattices. Ultimately, it relied on Gaboriau's deep work on $L^2$-invariants for equivalence relations \cite{MR1953191}. We present a short proof of Equation \eqref{eq: lattice and G l2 Betti} for arbitrary lattices in totally disconnected groups in Section~\ref{sub: l2 betti and their lattices}. This also gives a direct proof of proportionality of $L^2$-Betti numbers of lattices in the same locally compact, totally disconnected group which was obtained (for all locally compact groups) by Gaboriau~\cite{MR1953191}*{Corollaire 0.2}. We also note that the vanishing of $L^2$-Betti numbers of locally compact second countable groups is a coarse invariant~\cite{sauer+schroedl}. 

A fundamental result in the theory of $L^2$-Betti numbers is L\"uck's approximation theorem~\cite{Lu94} which expresses $L^2$-Betti numbers as limits of normalised Betti numbers along a  residual chain of finite index normal subgroups. Farber relaxed the condition of normality for the sequence of finite index subgroups in the approximation theorem to what is now called a  \emph{Farber sequence}~\cite{farber}. The relevance of this notion was explained in the context of invariant random subgroups by Bergeron and Gaboriau in \cite{MR2059438}. The notion of Farber sequence admits a probabilistic generalisation to the situation where the discrete group is replaced by a locally compact totally disconnected group and the chain of finite index subgroups by a sequence of lattices. 

	Throughout the entire article, let $G$ denote a locally compact totally disconnected, second countable, unimodular group with Haar measure~$\nu$ unless explicitly stated otherwise.  We will denote by $\mathcal{K}(G)$ the ordered set of compact-open subgroups of $G$.

\begin{definition}\label{defn: farber condition}
		A sequence $(\Gamma_i)_{i\in\N}$ of lattices in $G$ is a \emph{Farber sequence} if for every compact open subgroup $K<G$ and for every right coset $C$ of $K$ 
		the probability that a conjugate $g\Gamma_i g^{-1}$ meets $C\backslash\{e\}$ tends to zero as $i\to\infty$. 
\end{definition}

Here the probabilities are taken with respect to the probability Haar measures on $G/\Gamma_i$.  Note that one could replace \emph{for every compact open subgroup $K<G$} by 
\emph{for some compact open subgroup $K<G$} in the above definition. 

A lattice $\Gamma<G$ is called \emph{cocompact} or \emph{uniform} if $G/\Gamma$ is compact. Note that if $\Gamma$ is cocompact, then 
there is some compact open subgroup $K<G$ so that every conjugate 
of $\Gamma$ meets $K$ only in $\{e\}$. The following notion of uniform discreteness is a family version of 
cocompactness. The notion of weak uniform discreteness was introduced by Gelander~\cite{gelander}. 

\begin{definition} 
	Let $F$ be a family of lattices of~$G$. It is called \emph{uniformly discrete} if there is a neighbourhood $U\subset G$ of the identity so that 
	every conjugate of a lattice in $F$ meets $U$ only in~$\{e\}$. We call $F$ \emph{weakly uniformly discrete} if for every $\epsilon>0$ there is a neighbourhood $U\subset G$ of the identity  such that the probability that a conjugate of a lattice in $F$ intersects $U$ non-trivially is at most~$\epsilon$.   
\end{definition}  

The above definition works for every locally compact group $G$. If $G$ is also totally disconnected, one obtains an equivalent definition by replacing  \emph{neighborhood of the identity} with \emph{compact open subgroup}. 

Next we turn to our main result (proved in Subsection~\ref{sub: proof of main})  which provides an approximation theorem for Farber sequences of lattices in~$G$. A \emph{$G$-CW-complex} is a CW-complex with a $G$-action that permutes open cells such that for an open cell $e$ with 
$ge=e$ the map $x\mapsto gx$ for $x \in e$, is the identity (cf.~~\cite{tomdieck}*{II.1}).
A contractible $G$-CW-complex whose 
stabiliser groups are compact and open is called a \emph{topological model of $G$}. 
 A topological model for~$G$ always exists and is unique up to $G$-homotopy equivalence~\cite{lueck-survey}. We call a $G$-CW-complex cocompact if it consists of only finitely many $G$-orbits of cells.

\begin{theorem}\label{thm:main}
Assume that $G$ admits a topological model whose $(n+1)$-skeleton is cocompact. Let $(\Gamma_i)_{i\in \mathbb{N}}$ be a Farber sequence of lattices.  Then 
\begin{equation*}
\beta_n^{(2)}(G,\nu) \leq \liminf_{i \to \infty} \frac{\beta_n(\Gamma_i)}{\operatorname{covol}_{\nu}(\Gamma_i)} \quad\text{ for all $i\in\{0,\ldots, n\}$.}
\end{equation*}
If, in addition, $(\Gamma_i)_{i\in\N}$ is uniformly discrete, then
\begin{equation*} 
\beta_n^{(2)}(G,\nu) = \lim_{i \to \infty} \frac{\beta_n(\Gamma_i)}{\operatorname{covol}_{\nu}(\Gamma_i)} \quad \text{ for all $i\in\{0,\ldots, n\}$.}
\end{equation*}
\end{theorem}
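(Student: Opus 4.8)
The plan is to run a version of L\"uck's approximation theorem in which the role of ``an ambient group with a residual/Farber chain'' is played by $G$ together with a fixed topological model, and the finite-index subgroups are replaced by the lattices $\Gamma_i$. Fix a topological model $X$ whose $(n+1)$-skeleton is cocompact and let $C_\bullet=C_\bullet(X;\Q)$ be its cellular chain complex; in degrees $\le n+1$ one has $C_k=\bigoplus_{j=1}^{r_k}\Q[G/K_{k,j}]$ with $K_{k,j}\in\mathcal K(G)$. Since $\mathcal K(G)$ is downward directed and only finitely many $K_{k,j}$ occur, pick $K\in\mathcal K(G)$ contained in all of them; using that $\Q[G/K]\twoheadrightarrow\Q[G/K_{k,j}]$ splits over $\Q[G]$ (characteristic zero), one rewrites the truncated complex as a complex of free modules $\Q[G/K]^{r_k}$ whose differentials $d_k$ are matrices over the Hecke algebra $\mathcal H(G,K)=\Q[K\backslash G/K]$, with coefficients in $\Z[1/m]$ for a fixed $m$ after clearing denominators. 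The point of this reduction is that the combinatorial Laplacians $\Delta_k=d_{k+1}d_{k+1}^\ast+d_k^\ast d_k$ are then matrices of \emph{fixed size} $r_k$ over $\mathcal H(G,K)$ --- the same data whether one lets them act on $\ell^2(G/K)^{r_k}$ (the Hilbert $\mathcal N(G)$-module picture that computes $\betti_\bullet(G,\nu)$), on $\ell^2(\Gamma_i\backslash G/K)^{r_k}$, or on the finitely supported functions $\Q[\Gamma_i\backslash G/K]^{r_k}$. Since $X$ is contractible with finite $\Gamma_i$-stabilisers and we work in characteristic zero, $C_\bullet$ restricts to a projective $\Q[\Gamma_i]$-resolution of $\Q$, so $\beta_k(\Gamma_i)=\dim_\Q H_k\big(\Q[\Gamma_i\backslash G/K]^{r_\bullet},\,1\otimes d_\bullet\big)$ for $k\le n$.

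The two analytic inputs are as follows. First, the Farber condition is exactly what makes the Hecke traces converge: writing $\tau_G$ for the canonical trace on $\mathcal N(G,K)$ and $\tau_i$ for the suitably normalised trace attached to $\ell^2(\Gamma_i\backslash G/K)$, one checks by unwinding definitions that for finitely supported $h\in\mathcal H(G,K)$ the difference $\tau_i(h)-\tau_G(h)$ is a sum over $\gamma\in\Gamma_i\setminus\{e\}$ whose total weight is bounded by the probability that a random conjugate of $\Gamma_i$ meets $(\operatorname{supp}h)\setminus\{e\}$; as $\operatorname{supp}h$ is a finite union of cosets of $K$, this is controlled by Definition~\ref{defn: farber condition}, so $\tau_i(h)\to\tau_G(h)$. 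Consequently all moments of $\Delta_k$ converge and, the measures being supported in the fixed interval $[0,\|\Delta_k\|]$, the spectral measures satisfy $\mu_k^{(i)}\to\mu_k^{G}$ weakly, where $\mu_k^G(\{0\})$ computes $\betti_k(G,\nu)$ up to the constant $\nu(K)$. Second, one needs a \emph{uniform} lower bound for the Fuglede--Kadison determinants, i.e.\ a bound $\mu_k^{(i)}\big((0,\varepsilon]\big)\le\varphi(\varepsilon)$ with $\varphi(\varepsilon)\to0$ independently of $i$. Here the Hecke reduction pays off: $\Delta_k$ is a matrix of \emph{fixed} size $r_k$ defined over $\Z[1/m]$, so L\"uck's integrality estimate (via $\det\Delta_k=\det(\Delta_k^\ast\Delta_k)^{1/2}$ and positivity of $\log\det$ for such operators) yields such a $\varphi$ depending only on $r_k$, $m$ and $\|\Delta_k\|$, not on $i$.

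Given these inputs, the first inequality is the ``hard'' half of L\"uck approximation. Combining the uniform determinant bound with the weak convergence $\mu_k^{(i)}\to\mu_k^G$ and the portmanteau theorem for the open sets $[0,\varepsilon)$ yields $\liminf_i\mu_k^{(i)}(\{0\})\ge\mu_k^G(\{0\})$; comparing $\beta_k(\Gamma_i)$ with $\covol_\nu(\Gamma_i)\,\mu_k^{(i)}(\{0\})/\nu(K)$ --- an equality when $\Gamma_i$ is cocompact (finite combinatorial Hodge theory), and an inequality in the right direction in general, obtained by exhausting $\Gamma_i\backslash X$ by finite subcomplexes --- then gives $\betti_k(G,\nu)\le\liminf_i\beta_k(\Gamma_i)/\covol_\nu(\Gamma_i)$ for $k\le n$. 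For the second statement, uniform discreteness forces every $\Gamma_i$ to be cocompact (a non-uniform lattice in a totally disconnected group has nontrivial elements that can be conjugated arbitrarily close to the identity), and after shrinking $K$ below the uniform-discreteness neighbourhood we may assume $\Gamma_i$ acts freely on $G/K$, so that $\beta_k(\Gamma_i)/\covol_\nu(\Gamma_i)$ equals $\mu_k^{(i)}(\{0\})/\nu(K)$ on the nose; then the portmanteau theorem applied to the \emph{closed} set $\{0\}$ gives the complementary bound $\limsup_i\mu_k^{(i)}(\{0\})\le\mu_k^G(\{0\})$ for free, and with the first part we conclude equality. The step I expect to be the main obstacle is the uniform Fuglede--Kadison estimate in the Hecke-algebra setting, together with the precise dictionary relating $\tau_G$, $\tau_i$ and the normalising constant $\covol_\nu(\Gamma_i)$; and, for the first statement, making the reduction to finite subcomplexes of $\Gamma_i\backslash X$ rigorous when $\Gamma_i$ is not cocompact, where $\beta_k(\Gamma_i)$ may even be infinite.
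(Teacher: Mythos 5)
Your overall strategy is in the same spirit as the paper's: reduce to a complex over the Hecke algebra $\cH(G,K)$ whose differentials (and Laplacians) are fixed-size matrices over the integral Hecke algebra, apply Shapiro's lemma to relate $\beta_k(\Gamma_i)$ to the homology of the induced complex, show that the Farber condition forces convergence of the relevant Hecke-algebra functionals, and then run a L\"uck-type spectral argument with weak convergence plus a uniform determinant bound. All of those ingredients appear in the paper, and your identification of where the Farber hypothesis enters (as coset-hitting probabilities controlling $\tau_i(h)-\tau_G(h)$) is exactly right, as is your observation that uniform discreteness forces cocompactness and freeness of the $\Gamma_i$-action on $G/K$ for $K$ small enough.

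However, there is a genuine gap in the non-uniform case, precisely where the paper introduces its main new idea. You write that L\"uck's integrality estimate ``yields such a $\varphi$ depending only on $r_k$, $m$ and $\|\Delta_k\|$, not on $i$.'' This does not follow from the fixed size $r_k$ over $\cH(G,K)$ alone. For a non-uniform $\Gamma_i$ the operator $\pi_{\Gamma_i}(T_k)$ acts on $\ell^2(K\backslash G/\Gamma_i,\nu)^{r_k}$, which is an infinite-dimensional \emph{weighted} $\ell^2$-space: the double coset $Ks\Gamma_i$ has mass $|K\cap s\Gamma_i s^{-1}|^{-1}\le 1$. With respect to the orthonormal basis of that weighted space the matrix entries are no longer integers (one picks up factors involving the stabiliser orders), and the matrix is infinite, so L\"uck's determinant argument does not apply as stated. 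Indeed the very functional $\tau_i$ you want to use is not a trace here: the paper explicitly points out that $\tr_{\Gamma}(1_K)$ would be $\sum_{Ks\Gamma}|K\cap s\Gamma s^{-1}|^{-1}\cdot|K\cap s\Gamma s^{-1}|=|K\backslash G/\Gamma|$, infinite for non-uniform $\Gamma$, and replaces it by the non-tracial positive functional $\varphi_\Gamma$, normalised by $\covol_\nu(\Gamma)$. The missing idea is the cut-down: let $\bar X_{e,i}$ be the (finite) set of double cosets $Ks\Gamma_i$ with $K\cap s\Gamma_is^{-1}=\{e\}$, and let $P_{\Gamma_i}$ be the orthogonal projection onto $\ell^2(\bar X_{e,i})^{r_k}$. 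Then $P_{\Gamma_i}\pi_{\Gamma_i}(T_k)P_{\Gamma_i}^\ast$ \emph{is} a finite integer matrix of size $\le r_k\covol_\nu(\Gamma_i)$ in an orthonormal basis, and the normalisation by $\covol_\nu(\Gamma_i)$ is compatible with the size, so the $\log c/|\log\varepsilon|$ estimate goes through. The Farber hypothesis is then used a second time to show that the mass lost in the cut-down, $1-\varphi^e_{\Gamma_i}(1_K)$, tends to zero, so one still recovers the full $\mu_T(\{0\})$ in the limit. Your proposal of ``exhausting $\Gamma_i\backslash X$ by finite subcomplexes'' is not a substitute for this: it does not produce integer matrices with the right normalisation, and it does not furnish the clean comparison $\dim_\C\ker\bigl(P_{\Gamma_i}\pi_{\Gamma_i}(T_k)P_{\Gamma_i}^\ast\bigr)\le\beta_k(\Gamma_i)$ (kernel vectors supported on $\bar X_{e,i}$ are finitely supported cycles orthogonal to boundaries, hence nonzero in $H_k(\Gamma_i,\C)$), which is how the paper bridges the spectral estimate and the Betti numbers without any cocompactness. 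You correctly flag these two points as the likely obstacles; they are indeed the substance of the theorem, and as written the proposal does not resolve them.
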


Note that the inequality in Theorem \ref{thm:main} is the {\it non-trivial} and more interesting inequality, when one follows the proof of L\"uck's approximation theorem in the discrete case. Indeed, it is usually the other inequality -- sometimes called Kazhdan's inequality -- which follows more easily. However, in our situation we were not able to prove Kazhdan's inequality without further assumptions.

\vspace{0.1cm}

Next we relate the theorem to the subject of invariant random subgroups which emerged in the last several years. 

The Chabauty topology (cf.~Appendix~\ref{sec: chabauty}) on the 
set $\sub_G$ of closed subgroups of~$G$ turns $\sub_G$ into a compact, metrisable space. The group $G$ acts continuously 
on $\sub_G$ by conjugation. The \emph{space of invariant random subgroups} $\irs_G$  of $G$ is the space of $G$-invariant Borel probability measures on $\sub_G$ endowed with the topology of weak convergence. Point measures $\delta_N$ of closed normal subgroups $N<G$ yield examples of invariant random subgroups. Another source of invariant random subgroups are 
lattices $\Gamma<G$. Indeed, the push forward of the normalised Haar measure on $G/\Gamma$ under the measurable map 
\[ G/\Gamma\rightarrow\sub_G, ~g\Gamma\mapsto g\Gamma g^{-1} \]
is an invariant random subgroup, denoted by $\mu_\Gamma\in \sub_G$. 
Statements about random conjugates of a lattice~$\Gamma$ always refer to the measure $\mu_\Gamma$. 

The relationship of convergence of invariant random subgroups and Farber chains is made precise in the following proposition. We thank Arie Levit for discussions about the relation between convergence in $\irs_G$ and Farber convergence, in particular for pointing out that convergence in $\irs_G$ and weak uniform discreteness imply the property of being a Farber sequence. 

\begin{proposition}[cf.~Appendix~\ref{sec: chabauty}]\label{prop: on Farber vs IRS}
If $(\Gamma_i)_{i\in \mathbb{N}}$ is a Farber sequence then the sequence $(\Gamma_i)_{i\in \mathbb{N}}$ converges to the trivial subgroup seen as an invariant random subgroup. The converse is true provided the sequence of lattices is weakly uniformly discrete.
\end{proposition}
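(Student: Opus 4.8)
The plan is to reduce both implications to statements about the Chabauty topology on $\sub_G$ together with elementary facts about totally disconnected groups, and then feed in the definitions of the Farber condition and of weak uniform discreteness. First I would record two facts from Appendix~\ref{sec: chabauty}: since $\sub_G$ is compact and metrisable, a sequence of $G$-invariant probability measures converges to $\delta_{\{e\}}$ in $\irs_G$ if and only if $\mu_{\Gamma_i}(\mathcal O) \to 1$ for every open $\mathcal O \ni \{e\}$, and for this it suffices to test a neighbourhood basis of $\{e\}$; and a neighbourhood basis of $\{e\} \in \sub_G$ is given by the sets $\mathcal O^C := \{H \in \sub_G : H \cap C = \emptyset\}$ with $C \subset G$ compact and $e \notin C$. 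I would also note that, because $G$ is totally disconnected, every such $C$ is contained in a finite union $K'h_1 \cup \dots \cup K'h_m$ of right cosets of a single compact open subgroup $K' < G$, none containing $e$ (pick a compact open subgroup $K'$ disjoint from the closed set $C$; then $C$, being compact, meets only finitely many cosets of $K'$, none of them $K'$ itself). Throughout, ``the probability that $g\Gamma_i g^{-1}$ meets $S$'' means $\mu_{\Gamma_i}(\{H : H \cap S \ne \emptyset\})$, and I would check in passing that the relevant $S$ give Borel sets in $\sub_G$ (closed for $S$ compact; $F_\sigma$ when $S$ is a compact set with the identity removed).

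For the forward implication (Farber sequence $\Rightarrow$ convergence to $\delta_{\{e\}}$) I would fix a basic neighbourhood $\mathcal O^C$ with $C \subseteq \bigcup_l K'h_l$ as above. Since $e \notin K'h_l$ we have $K'h_l \setminus \{e\} = K'h_l$, so applying the Farber condition to the compact open subgroup $K'$ and to each right coset $K'h_l$ gives $\mu_{\Gamma_i}(\{H : H \cap K'h_l \ne \emptyset\}) \to 0$. A union bound yields $\mu_{\Gamma_i}(\{H : H \cap \bigcup_l K'h_l \ne \emptyset\}) \to 0$, and monotonicity ($\mathcal O^C \supseteq \mathcal O^{\bigcup_l K'h_l}$) then forces $\mu_{\Gamma_i}(\mathcal O^C) \to 1$, as wanted.

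For the converse, assume $\mu_{\Gamma_i} \to \delta_{\{e\}}$ in $\irs_G$ and that $(\Gamma_i)_{i}$ is weakly uniformly discrete; I would fix a compact open subgroup $K < G$ and a right coset $C = Kh$, and split into cases. If $h \notin K$, then $C$ is compact with $e \notin C$ and $C \setminus \{e\} = C$, so $\mathcal O^C$ is an open neighbourhood of $\{e\}$ and convergence immediately gives $\mu_{\Gamma_i}(\{H : H \cap C \ne \emptyset\}) = 1 - \mu_{\Gamma_i}(\mathcal O^C) \to 0$ — exactly what the Farber condition requires for this coset, with no need for uniform discreteness. If $h \in K$, so $C = K$, I would invoke weak uniform discreteness: given $\epsilon > 0$ it supplies a compact open subgroup $U$, which I may take inside $K$, with $\mu_{\Gamma_i}(\{H : H \cap (U \setminus \{e\}) \ne \emptyset\}) \le \epsilon$ for \emph{all} $i$; splitting $K \setminus \{e\} = (K \setminus U) \sqcup (U \setminus \{e\})$ and using $\mu_{\Gamma_i}(\{H : H \cap (K \setminus U) \ne \emptyset\}) = 1 - \mu_{\Gamma_i}(\mathcal O^{K\setminus U}) \to 0$ by convergence, I would get $\limsup_i \mu_{\Gamma_i}(\{H : H \cap (K \setminus \{e\}) \ne \emptyset\}) \le \epsilon$, and let $\epsilon \to 0$.

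The step I expect to be the main obstacle — indeed the only genuinely delicate point — is this last case. The set $\{H : H \cap K = \{e\}\}$ equals $\bigcap_U \mathcal O^{K \setminus U}$ over compact open subgroups $U < K$, so it is merely a $G_\delta$ subset of $\sub_G$, not open; weak convergence in $\irs_G$ therefore controls each approximating open set $\mathcal O^{K \setminus U}$ but yields nothing uniform as $U$ shrinks to $\{e\}$, and a naive diagonal argument would require illegitimately exchanging $\lim_{i \to \infty}$ with $\lim_{U \downarrow \{e\}}$. It is precisely the uniformity over the family $\{\Gamma_i\}_i$ built into weak uniform discreteness that licenses this exchange, and getting the order of quantifiers right is the crux; everything else is unwinding the Chabauty topology and the definition of $\mu_\Gamma$.
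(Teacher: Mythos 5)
Your proposal is correct and follows essentially the same route as the paper: reduce convergence to $\delta_{\{e\}}$ to the sets $O_1(C)$ for compact $C\not\ni e$, cover such $C$ by finitely many non-trivial cosets of a compact open subgroup to apply the Farber condition, and for the converse use weak uniform discreteness to handle the coset $K$ itself via the decomposition $K\setminus\{e\}=(K\setminus U)\sqcup(U\setminus\{e\})$ — the paper phrases this last step contrapositively but it is the same argument. (The paper's proof additionally observes that a Farber sequence is automatically weakly uniformly discrete, which is not required by the statement and which you rightly omit.)
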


\begin{remark}\label{rem: weak uniform discreteness}
Gelander~\cite{gelander} proves that 
the family of all lattices in a connected non-compact simple Lie group
is uniformly weakly discrete which implies the Kazhdan-Margulis theorem. 
The family of all lattices in a $p$-adic analytic group $G$ is uniformly 
discrete since $G$ contains an compact open torsionfree subgroup. 
\end{remark}

A pioneering study of invariant random subgroups in 
simple Lie groups has been done in~\cites{7sam_announce, 7sam} by Ab\'ert, Bergeron, Biringer, Gelander, Nikolov, Raimbault, and Samet. They prove structural results on the 
space $\irs_G$ for $G$ being a simple Lie group with property $(T)$ and a convergence result for uniformly discrete sequences of lattices in such $G$, similar to Theorem~\ref{thm:main}. We cannot expect interesting structural results on $\irs_G$ in the generality of arbitrary totally disconnected groups. However, for totally disconnected groups of algebraic origin this was achieved by Gelander and Levit~\cite{gelander+levit}. 

\begin{theorem}[Gelander-Levit] \label{ref: gelander+levit}
	Let $G$ be the $k$-points of a simple linear 
	algebraic group over a non-Archimedean local field~$k$. 
	Assume that~$G$ has property~(T). 
	Let $(\Gamma_i)_{i\in \mathbb{N}}$ be a sequence of lattices whose covolumes tend 
	to~$\infty$ as $i\to\infty$. If the characteristic of~$k$ is positive, we additionally assume that $(\Gamma_i)_{i\in \mathbb{N}}$ is uniformly discrete and torsionfree. Then $(\Gamma_i)_{i\in \mathbb{N}}$ is a Farber sequence. 
\end{theorem}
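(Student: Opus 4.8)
The plan is to derive this from the Stuck--Zimmer-type rigidity of invariant random subgroups over non-Archimedean local fields, together with Proposition~\ref{prop: on Farber vs IRS}. By that proposition it suffices to verify two statements: that the family $\{\Gamma_i\}_{i\in\N}$ is weakly uniformly discrete, and that $\mu_{\Gamma_i}\to\delta_{\{e\}}$ in $\irs_G$. Since $Z(G)$ is finite, I would first pass to $G/Z(G)$ and the images of the $\Gamma_i$: this is a finite central quotient, so it preserves property~(T), the growth of the covolumes, the positive-characteristic hypotheses, and the Farber condition (which, as noted after Definition~\ref{defn: farber condition}, may be tested on a single compact open subgroup), so one may assume $G$ is centre-free.

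Weak uniform discreteness is the soft ingredient. In characteristic $0$ the group $G$ is $p$-adic analytic and hence contains a torsionfree compact open subgroup $K_0$ (cf.~Remark~\ref{rem: weak uniform discreteness}); since a discrete subgroup of a compact torsionfree group is trivial, every conjugate of every lattice in $G$ meets $K_0$ only in $\{e\}$, so $\{\Gamma_i\}$ is even uniformly discrete. In positive characteristic uniform discreteness is part of the hypothesis. In either case fix a compact open $K_0<G$ such that every conjugate of every $\Gamma_i$ meets $K_0$ only in $\{e\}$.

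To prove $\mu_{\Gamma_i}\to\delta_{\{e\}}$, I would use that $\irs_G$ is compact and metrisable (Appendix~\ref{sec: chabauty}): it is enough to show that an arbitrary weak limit $\mu$ of a subsequence of $(\mu_{\Gamma_i})$ equals $\delta_{\{e\}}$. The set $\{H\in\sub_G:H\cap K_0\neq\{e\}\}$ is open in $\sub_G$ and has $\mu_{\Gamma_i}$-measure $0$ for every $i$; hence it has $\mu$-measure $0$, so $\mu$-almost every $H$ is discrete. One then invokes the Gelander--Levit classification of ergodic invariant random subgroups of a centre-free simple algebraic group with property~(T) over a non-Archimedean local field \cite{gelander+levit}: every such IRS is $\delta_{\{e\}}$, $\delta_G$, or $\mu_\Lambda$ for a lattice $\Lambda<G$. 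Decomposing $\mu$ into ergodic components, $\delta_G$ is excluded because $\mu$ is supported on discrete subgroups, and a component $\mu_\Lambda$ is excluded by the covolume growth: if a set of positive $\mu$-measure consisted of conjugates of a fixed lattice $\Lambda$, then for large $i$ a positive proportion of the random conjugates $g\Gamma_i g^{-1}$ would be Chabauty-close to a conjugate $\Lambda'$ of $\Lambda$, hence would contain $\Lambda'\cap B_R$ for a ball $B_R$ large enough to generate $\Lambda'$ --- this uses finite generation of the lattices involved, automatic in characteristic $0$ and the place where torsionfreeness and uniform discreteness enter in positive characteristic --- and would therefore contain $\Lambda'$, forcing $\covol_\nu(\Gamma_i)\leq\covol_\nu(\Lambda)$, against $\covol_\nu(\Gamma_i)\to\infty$. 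Equivalently, covolume is lower semicontinuous along Chabauty limits of lattices, so $\mu$ cannot charge lattices of bounded covolume. Hence $\mu=\delta_{\{e\}}$, and Proposition~\ref{prop: on Farber vs IRS} yields the Farber property.

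The genuinely hard step is not this bookkeeping but the Gelander--Levit classification itself: that a discrete ergodic IRS of a property-(T) simple group over a local field which is not a point mass on a normal subgroup arises from a lattice. Establishing it means transporting the property-(T) rigidity toolkit --- strong ergodicity from property~(T), Howe--Moore mixing, the Borel density theorem, and the Stuck--Zimmer induction scheme --- to the Bruhat--Tits building; and it is exactly in positive characteristic that the finiteness of conjugacy classes of lattices of bounded covolume, used in the last step above, can break down, which is why the torsionfreeness and uniform-discreteness hypotheses are imposed there.
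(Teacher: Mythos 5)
First, a point of order: the paper does not prove this statement. Theorem~\ref{ref: gelander+levit} is quoted as a result of Gelander and Levit and is supported only by the citation \cite{gelander+levit}; there is no in-paper argument to compare yours against. What you have written is therefore best judged as a reconstruction of how the Farber property is extracted from the Gelander--Levit machinery, and at that level your outline is the right one and matches how such statements are derived in \cite{7sam} and \cite{gelander+levit}: establish (weak) uniform discreteness, show $\mu_{\Gamma_i}\to\delta_{\{e\}}$ by classifying the possible ergodic components of a weak limit and ruling out lattice components via the covolume hypothesis, then apply Proposition~\ref{prop: on Farber vs IRS}. But you should be clear-eyed that your argument is a reduction, not a proof: the entire content of the theorem sits in the ergodic IRS classification that you invoke from \cite{gelander+levit}, which is exactly what the attribution of the theorem acknowledges. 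As a self-contained proof the proposal is circular; as an explanation of why the citation suffices, it is essentially correct.

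Two concrete points need repair even at the level of the reduction. First, the passage to $G/Z(G)$ does not obviously ``preserve the Farber condition'' in the direction you need: if $z\in Z(G)\setminus\{e\}$ lies in every $\Gamma_i$, then every conjugate of every $\Gamma_i$ meets the coset $zK$ (or $K$ itself) in the non-identity element $z$, so $(\Gamma_i)$ fails to be Farber in $G$ even though the images could be Farber in $G/Z(G)$. You must either argue that the lattices eventually miss the centre or run the argument in $G$ itself. Second, your exclusion of a lattice component $\mu_\Lambda$ is shaky as written: Chabauty-closeness of $g\Gamma_ig^{-1}$ to $\Lambda'$ gives elements of $g\Gamma_ig^{-1}$ \emph{near} each point of $\Lambda'\cap B_R$, not containment of $\Lambda'\cap B_R$, so the ``hence contains $\Lambda'$'' step does not follow; and the semicontinuity you appeal to goes the other way (what one needs is that, on a uniformly discrete family, $\covol(\lim_i H_i)\ge\limsup_i\covol(H_i)$, i.e.\ upper semicontinuity of covolume, obtained by counting lattice points in large compacta). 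The corrected counting argument does yield the contradiction with $\covol_\nu(\Gamma_i)\to\infty$, so the gap is reparable, but as stated the step is wrong.
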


Thus, our results in combination of the work of Gelander and Levit nicely complement the results in \cite{7sam}, which are more analytic in spirit and only apply to Lie groups.

We discuss some specific examples of groups and applications in Section~\ref{sec: computations}. We obtain, in particular, new computations for Chevalley 
groups (see Theorem~\ref{thm: computation for lattices}):

\begin{theorem}\label{thm: computation for lattices in intro}
Let $\bG$ be a simple, simply connected Chevalley group over $\F_q$. 
Let $d$ be the rank of $\bG$. We normalise the Haar measure $\nu$ of $G$ so that the Iwahori group of $G$ has 
measure~$1$. Let $e_1,\ldots, e_d$ denote the exponents of the affine Weyl group of $\bG$. 
Then we have 
\[ \betti_d(\bG(\F_q((t^{-1}))), \nu)=
	\prod_{i=1}^{d} \frac{q^{e_i}-1}{1+q+\ldots +q^{e_i}}>0.
\]
All other $L^2$-Betti numbers vanish. The only non-vanishing $L^2$-Betti number 
of the lattice $\bG(\F_q[t])<\bG(\F_q((t^{-1})))$ is 
\[ \betti_d\bigl(\bG(\F_q[t])\bigr)=
	\prod_{i=1}^{d} (q^{e_i+1}-1)^{-1}.\]

If we assume, in addition, that $\bG$ is of classical type and $d>1$ or of type $\mathrm{\tilde E_6}$, and $q=p^a>9$ is a power of a prime $p>5$ if $\bG$ is not of type $\mathrm{\tilde A}_d$, then the only non-vanishing $L^2$-Betti number of an  arbitrary lattice $\Gamma<\bG(\F_q((t^{-1})))$ 
is in degree $d$ and satisfies 
\[ \betti_d(\Gamma)\ge \betti_d\left(\bG(\F_q[t])\right).\]
\end{theorem}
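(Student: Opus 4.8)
The plan is to treat the three parts in turn, with the main work being the computation of $\betti_d(\bG(\F_q((t^{-1}))),\nu)$; the statements about lattices then follow by combining this with the earlier results of the paper. For the computation for $G=\bG(\F_q((t^{-1}))$, the first step is to produce a good topological model. The group $G$ acts on its Bruhat--Tits building $X$, which is a contractible locally finite simplicial complex of dimension $d$ on which $G$ acts simplicially with compact-open stabilisers (the parahoric subgroups); the action is cocompact since the fundamental domain is a single chamber. Hence $X$ is a cocompact topological model for $G$, and in particular $\betti_k^{(2)}(G,\nu)=0$ for $k>d$ for dimension reasons, while the alternating sum $\sum_k (-1)^k\betti_k^{(2)}(G,\nu)$ equals the Euler characteristic $\chi^{(2)}(G,\nu)$. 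The second step is to identify which $L^2$-Betti numbers vanish in degrees $0,\dots,d-1$: here $\betti_0^{(2)}(G,\nu)=0$ because $G$ is non-compact (non-amenable, in fact), and the vanishing in intermediate degrees should follow from the fact that the building of an affine Chevalley group of rank $d$ has the homotopy type of a wedge of $d$-spheres (Solomon--Tits), together with the standard spectral-sequence / Davis-complex argument computing $L^2$-cohomology of buildings as in Dymara \cite{dymara} and Davis--Dymara--Januszkiewicz--Okun \cite{davisetal}; alternatively one invokes \eqref{eq: lattice and G l2 Betti} and known vanishing of group homology of $\bG(\F_q[t])$ below the top degree. Once only $\betti_d^{(2)}$ survives, it equals $(-1)^d\chi^{(2)}(G,\nu)$.

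The third step is the actual evaluation of $\chi^{(2)}(G,\nu)$. By \eqref{eq: lattice and G l2 Betti} applied to the lattice $\Gamma_0=\bG(\F_q[t])$, or directly by the cocompact model $X$, we have $\chi^{(2)}(G,\nu)=\chi(\Gamma_0)/\covol_\nu(\Gamma_0)$ when $\Gamma_0$ is cocompact — but $\bG(\F_q[t])$ is a non-uniform lattice, so instead I would compute $\chi^{(2)}(G,\nu)$ cell-by-cell on $X$ as $\sum_{\sigma} (-1)^{\dim\sigma}\,\nu(G_\sigma)^{-1}$, the sum over a set of orbit representatives of cells, where $G_\sigma$ is the (compact-open) stabiliser. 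With the normalisation that the Iwahori subgroup $B$ has $\nu(B)=1$, the stabiliser of a face of the fundamental chamber of type $S\setminus\{s\}$ is a maximal parahoric whose index over $B$ is the Poincaré polynomial $W_{S\setminus\{s\}}(q)=\sum_{w\in W_{S\setminus\{s\}}} q^{\ell(w)}$ of the corresponding finite parabolic subgroup of the affine Weyl group $W$. So $\chi^{(2)}(G,\nu)=\sum_{T\subsetneq S} (-1)^{|T|}/W_T(q)$, and this rational-function identity has to be evaluated to $(-1)^d\prod_{i=1}^d \frac{q^{e_i}-1}{1+q+\dots+q^{e_i}}$; this is where the exponents $e_i$ of the affine Weyl group enter, via the product formula $\sum_{T\subseteq S}(-1)^{|T|}/W_T(q)$-type manipulations and the factorisation of Poincaré polynomials of affine Coxeter groups, and I expect this bookkeeping with the alternating sum over the affine Dynkin diagram to be the main obstacle — one wants to massage it into a telescoping/product form using the standard generating-function identity for Coxeter Poincaré polynomials (cf. the rational Euler characteristic computations of Serre and of Harder for $\bG(\F_q[t])$).

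For the lattice statements, the value $\betti_d(\bG(\F_q[t]))$ follows immediately from \eqref{eq: lattice and G l2 Betti}: $\betti_d(\bG(\F_q[t]))=\betti_d^{(2)}(G,\nu)\cdot\covol_\nu(\bG(\F_q[t]))$, and $\covol_\nu(\bG(\F_q[t]))$ with the Iwahori-normalisation is a classical computation (Harder's Gauss--Bonnet / the value of the relevant zeta function), giving $\covol_\nu(\bG(\F_q[t]))=\prod_{i=1}^d \frac{1+q+\dots+q^{e_i}}{q^{e_i+1}-1}$ up to the combinatorial factor that cancels the numerator of $\betti_d^{(2)}(G,\nu)$, leaving $\prod_{i=1}^d(q^{e_i+1}-1)^{-1}$; the vanishing of all other $\betti_k(\bG(\F_q[t]))$ is \eqref{eq: lattice and G l2 Betti} again. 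Finally, for an arbitrary lattice $\Gamma<G$ under the stated hypotheses ($\bG$ classical of rank $>1$ or type $\tilde E_6$, $q=p^a>9$, $p>5$ off type $\tilde A_d$), the point is that such $G$ has property (T) and, by the normal subgroup theorem / Margulis-type arithmeticity and the known structure of lattices in these groups (this is exactly where the numerical hypotheses on $q$ and $p$ are used, to rule out the exotic lattices and to force $\covol_\nu(\Gamma)\le\covol_\nu(\bG(\F_q[t]))$ via a minimal-covolume argument in the spirit of Lubotzky and of the Prasad volume formula), every lattice $\Gamma$ has $\covol_\nu(\Gamma)\le\covol_\nu(\bG(\F_q[t]))$; then \eqref{eq: lattice and G l2 Betti} gives $\betti_d(\Gamma)=\betti_d^{(2)}(G,\nu)\cdot\covol_\nu(\Gamma)\ge\betti_d^{(2)}(G,\nu)\cdot\covol_\nu(\bG(\F_q[t]))=\betti_d(\bG(\F_q[t]))$ — note the inequality direction is forced by $\betti_d^{(2)}(G,\nu)>0$ — and vanishing of $\betti_k(\Gamma)$ for $k\ne d$ is again immediate from \eqref{eq: lattice and G l2 Betti} and the vanishing of $\betti_k^{(2)}(G,\nu)$.
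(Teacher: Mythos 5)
Your proposal follows essentially the same route as the paper: concentration of the (weighted) $L^2$-cohomology of the affine building in top degree via Davis--Dymara--Januszkiewicz--Okun, evaluation of $\betti_d$ as $(-1)^d\chi(G,\nu)$ using Dymara's formula $\chi(G,\nu)=1/\omega(q)$ together with Bott's product formula for the Poincar\'e series of an affine Coxeter system, the covolume of $\bG(\F_q[t])$ via Prasad's volume formula, proportionality \eqref{eq: lattice and G l2 Betti}, and Golsefidy's minimal-covolume theorem for the arbitrary-lattice statement. Two slips are worth flagging. First, in the last step you assert $\covol_\nu(\Gamma)\le\covol_\nu(\bG(\F_q[t]))$ and call this a ``minimal-covolume argument''; minimality of the covolume of $\bG(\F_q[t])$ gives the opposite inequality $\covol_\nu(\Gamma)\ge\covol_\nu(\bG(\F_q[t]))$, and it is this direction that, multiplied by $\betti_d^{(2)}(G,\nu)>0$, yields $\betti_d(\Gamma)\ge\betti_d(\bG(\F_q[t]))$ --- as written, your two displayed inequalities contradict each other. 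Second, the parenthetical alternatives you offer for the intermediate-degree vanishing do not work: the affine building is contractible (Solomon--Tits concerns the spherical building at infinity, and in any case ordinary homotopy type does not control weighted $L^2$-cohomology), and appealing to ``known vanishing of group homology of $\bG(\F_q[t])$ below top degree'' is circular, since what is needed is vanishing of its $L^2$-Betti numbers, which is equivalent (via \eqref{eq: lattice and G l2 Betti}) to the statement being proved. Your primary route --- citing \cite{davisetal} (Corollary~14.5) --- is the correct and, as far as the paper is concerned, the only one. Finally, the identity $\sum_{T\subsetneq S}(-1)^{|T|}/W_T(q)=\pm 1/\omega(q)$ that you single out as ``the main obstacle'' is exactly Dymara's Corollary~3.4, which the paper cites rather than reproves, and the covolume bookkeeping you leave implicit (the correct value is $\covol_\nu(\bG(\F_q[t]))=\prod_{i=1}^d\bigl((q-1)(q^{e_i}-1)\bigr)^{-1}$ with the Iwahori normalisation, the factor $(q-1)^{-d}q^{-\sum e_i}=|\bB(\F_q)|^{-1}\cdot\prod q^{e_i}\cdot q^{-2\sum e_i}$ arising from renormalising Golsefidy's Haar measure) does need to be carried out, but is routine.
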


With the previous theorem we obtain an interesting application of Theorem~\ref{thm:main}: 

\begin{theorem}\label{thm: limit in intro}
We retain the setting of Theorem~\ref{thm: computation for lattices in intro}. 
Let $(\Gamma_i)_{i \in \mathbb N}$ be a Farber chain of lattices in $G$. Then 
\[ \prod_{i=1}^{d} \frac{q^{e_i}-1}{1+q+\ldots +q^{e_i}}\le\liminf_{i\to\infty} \frac{\beta_d(\Gamma_i)}{\covol_\nu(\Gamma_i)}. \]
\end{theorem}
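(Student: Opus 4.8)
The plan is to deduce Theorem~\ref{thm: limit in intro} directly by feeding the output of Theorem~\ref{thm: computation for lattices in intro} into the inequality half of Theorem~\ref{thm:main}. So the first thing I would check is that the hypothesis of Theorem~\ref{thm:main} is met for $G=\bG(\F_q((t^{-1})))$ with $n=d$: that is, that $G$ admits a topological model whose $(d+1)$-skeleton is cocompact. The natural candidate is the Bruhat--Tits building of $\bG$ over $\F_q((t^{-1}))$, which is a contractible $G$-CW-complex of dimension exactly $d$ with compact-open (in fact Iwahori-type) cell stabilisers and finitely many $G$-orbits of cells; hence its whole skeleton, a fortiori its $(d+1)$-skeleton, is cocompact. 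This shows $G$ is of type $\mathrm{F}_\infty$ in the appropriate sense and that Theorem~\ref{thm:main} applies in degree $d$.

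Next I would invoke Theorem~\ref{thm:main} with $n=d$ and the given Farber chain $(\Gamma_i)$ to obtain
\[
\betti_d(G,\nu)\le \liminf_{i\to\infty}\frac{\beta_d(\Gamma_i)}{\covol_\nu(\Gamma_i)}.
\]
Then I would substitute the exact value computed in Theorem~\ref{thm: computation for lattices in intro}, namely $\betti_d(G,\nu)=\prod_{i=1}^{d}\frac{q^{e_i}-1}{1+q+\cdots+q^{e_i}}$, for the Haar normalisation in which the Iwahori subgroup has measure~$1$. Since that is precisely the normalisation fixed in the statement of Theorem~\ref{thm: computation for lattices in intro} and inherited by Theorem~\ref{thm: limit in intro}, no rescaling is needed and the two displayed quantities match on the nose, giving the claimed inequality. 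The fact that all $L^2$-Betti numbers of $G$ in degrees $\ne d$ vanish is not needed for this particular statement, though it explains why degree $d$ is the only interesting one.

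The only genuine point requiring care — and the step I would treat as the main obstacle — is the verification that the Bruhat--Tits building really provides a topological model in the precise technical sense of the paper (contractible $G$-CW-complex, stabilisers compact \emph{and open}, action permuting open cells with the rigidity condition $ge=e\Rightarrow g|_e=\id$), and that cocompactness of the $(d+1)$-skeleton holds. Contractibility and finiteness of the number of orbits are classical (Bruhat--Tits, and the building has only finitely many $G$-orbits of simplices because the affine Weyl group acts with a simplex as fundamental domain); openness of stabilisers follows because facet stabilisers are parahoric subgroups, which are compact open. The CW-rigidity condition may force passing to the barycentric subdivision, on which the simplicial $G$-action becomes admissible in the required sense; this is a standard manoeuvre but worth stating. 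Once that is in place, the theorem is immediate, so I would keep the proof short: one sentence recalling the building as the model, one application of Theorem~\ref{thm:main}, and one substitution from Theorem~\ref{thm: computation for lattices in intro}.
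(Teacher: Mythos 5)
Your proposal is correct and is precisely the argument the paper intends (the paper does not print a separate proof of this theorem: it states it as the immediate consequence of combining Theorem~\ref{thm:main} in degree $d$ with the value of $\betti_d(G,\nu)$ from Theorem~\ref{thm: computation for lattices in intro}). The hypothesis of Theorem~\ref{thm:main} is indeed supplied by the Bruhat--Tits building, which the paper verifies once and for all in the discussion following Setup~\ref{setup} (the geometric realisation in Davis's sense is a $G$-CW-complex with compact-open stabilisers, and strong transitivity gives finitely many $G$-orbits of cells, so the full $d$-dimensional complex — hence its $(d{+}1)$-skeleton — is cocompact); your remark about possibly passing to a subdivision to get CW-rigidity is the right precaution, and your Haar-normalisation bookkeeping matches the paper's.
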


Note that we cannot apply L\"uck's approximation theorem to prove Theorem~\ref{thm: limit in intro} -- even in the case of a residual chain -- since the groups $\Gamma_i$ do not satisfy the necessary finiteness condition: they are not of 
type $\mathrm{FP}_d$ (actually type $\mathrm{FP}_{d+1}$ would be needed) by a theorem of Bux and Wortman~\cite{bux}. The Betti numbers $\beta_d(\Gamma_i)$ are 
finite by a Theorem of Harder~\cite{harder}*{Satz~$1$}. 

We finish the paper 
with some miscellaneous observations, including a discussion of the Connes embedding problem for Hecke-von Neumann algebra which is a byproduct of our efforts to prove Theorem~\ref{thm:main}. 

\pagebreak
\section{Algebraic and homological aspects of Hecke algebras}

\subsection{The Hecke algebras of~$G$ and of a Hecke pair $(G,K)$}\label{sub: hecke pair}

Let $K< G$ be a compact open subgroup. The pair $(G,K)$ will be called a \emph{Hecke pair}. We normalise the Haar measure to $\nu(K)=1$.

The \emph{Hecke algebra} $\cH(G)$ of $G$ is the convolution algebra of compactly supported, locally constant, and complex-valued functions on~$G$. It is endowed with an involution given by $f^\ast(g)=\overline{f(g^{-1})}$. 
The \emph{Hecke algebra} $\cH(G,K)$ of the Hecke pair $(G,K)$ is the 
convolution algebra of compactly supported, $K$-bi-invariant, and complex-valued functions on $G$. 
Note that 
\begin{equation*}
\cH(G,K) = 1_K\cH(G)1_K = \quad\bigoplus_{{KgK \in K \bs G /K}} \C \cdot 1_{KgK},
\end{equation*}
where $1_{KgK}$ denotes the characteristic function of the double coset $KgK \in K \bs G / K$. 
Since $K \subset G$ is open, $K \bs G$ is discrete and by our normalisation the push-forward of the Haar measure is equal to the counting measure. Hence, the space of square integrable left $K$-invariant functions on $G$ is naturally identified with $\ell^2(K\bs G)$. We denote by $1_{Kg}$ the characteristic function of the set $Kg$. The set $\{1_{Kg} \mid Kg \in K \bs G \}$ is an orthonormal basis of $\ell^2(K\bs G)$. Convolution determines a natural representation 
\[ \pi\colon\cH(G,K)\to B(\ell^2(K\bs G))\]
of $\cH(G;K)$ as bounded operators on $\ell^2 (K\bs G)$. In the sequel let $[P]\in\{0,1\}$ denote the truth value of a mathematical expression~$P$. We have 
\begin{align*}
\pi(1_{KgK})(1_{Kh})=(1_{KgK} \ast 1_{Kh})(s) &= \int_{G} [t \in KgK]\cdot [t^{-1}s \in Kh] \ d\nu(t) \\
&= \int_{G} [t \in KgK]\cdot [s^{-1}t \in h^{-1}K] \ d\nu(t) \\
&= \int_{G} [t \in KgK]\cdot [t \in sh^{-1}K] \ d\nu(t) \\
&= \int_{G} [t \in KgK \cap sh^{-1}K] \ d\nu(t) \\
&= \nu(KgK \cap sh^{-1}K).
\end{align*}
Thus, setting $a_{g,h}^s := \nu(KgK \cap sh^{-1}K)$, we obtain
$$1_{KgK} \ast 1_{Kh} = \quad\sum_{{Ks \in K \bs G}} a_{g,h}^s \cdot 1_{Ks}.$$
The numbers $a_{g,h}^s$ are either $0$ or $1$ since $KgK \cap sh^{-1}K$ is either empty or equal $sh^{-1}K$. Since $K$ is compact, there are, for fixed $g,h \in G$, only finitely many $Ks \in K \bs G$ such that $a_{g,h}^s \neq 0$.

Similarly to the computation above, we see that
\begin{equation} \label{eq1} 1_{KgK} \ast 1_{KhK}=\quad \sum_{{KsK \in K \bs G/K}} b_{g,h}^s \cdot 1_{KsK}
\end{equation} with 
\[b_{g,h}^s := \nu(KgK \cap sKh^{-1}K).\] 
The numbers $b_{g,h}^s$ are non-negative integers since $KgK \cap sKh^{-1}K$ is a finite union of right cosets of $K$. They are called the \emph{structure constants} of the Hecke algebra $\cH(G,K)$.

\begin{definition}\label{def: trace on hecke algebra}
The linear functional 
\begin{equation*}
{\rm tr} \colon \cH(G,K) \to \C,\quad
{\rm tr}\bigl(~~\sum_{{KsK \in K \bs G/K}} a_{KsK} \cdot 1_{KsK} \bigr): = a_K.
\end{equation*}
defines a trace on the Hecke algebra $\cH(G,K)$. 
\end{definition}

Indeed, the trace property follows from unimodularity: 
\[{\rm tr}(1_{KgK} \ast 1_{KhK}) = \nu(KgK \cap Kh^{-1}K) = \nu(KhK \cap Kg^{-1}K) = {\rm tr}(1_{KhK} \ast 1_{KgK})\]
It is easy to see that ${\rm tr}$ is positive, unital and self-adjoint with respect to the natural involution $1_{KgK}^* = 1_{Kg^{-1}K}$. We conclude that $\cH(G,K)$ is a complex $*$-algebra with a unital, positive and faithful trace. Moreover, we obtain 
\[\tr(T) = \langle \pi(T)(1_K),1_K \rangle \quad \text{ for $T \in \cH(G,K)$.}\]

\subsection{The integral Hecke algebra} It is important for our approach that  homological invariants of the Hecke pair $(G,K)$ are defined not only in terms of its associated Hecke algebra, but in terms of some integral version of it -- a canonical $\Z$-algebra sitting inside $\cH(G,K)$ that plays the role of the integral group ring sitting inside the complex group.

We define the \emph{integral Hecke algebra} $\Z[K \bs G/K]$ to be the $\Z$-linear span of \[\{1_{KgK} \mid KgK \in K \bs G /K \} \subset \cH(G,K).\]
    By~\eqref{eq1}, $\Z[K \bs G/K]$ is closed under multiplication provided $\nu(K)=1$. For brevity, we will denote the integral Hecke algebra also by $\Z[G,K]$. The following lemma sheds some light on how to think of the integral Hecke algebra.

\begin{lemma}\label{lem: identification G-equivariant maps}
The action of $\Z[G,K]$ on $\Z[K \bs G]$ by convolution yields an identification of the ring $\Z[K \bs G/K]$ with  the ring \[\hom_{\Z}(\Z[K \bs G],\Z[K \bs G])^G\]
    of right $G$-equivariant endomorphisms of $\Z[K \bs G]$.
\end{lemma}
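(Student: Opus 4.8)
The plan is to construct the identification map directly and check it is a ring isomorphism by exhibiting an inverse. First I would note that $\Z[K\bs G]$ is a free $\Z$-module on the basis $\{1_{Kg} : Kg\in K\bs G\}$, and that $G$ acts on it on the right by $1_{Kg}\cdot h = 1_{Kgh}$, extended $\Z$-linearly; this action is by module automorphisms and makes $\Z[K\bs G]$ a right $G$-module (in fact a right $\Z[G]$-module, but $\Z[G]$ is not literally available since $G$ is not discrete, so one works with the abstract group action). The computation already carried out in the excerpt shows that $1_{KgK}\ast 1_{Kh} = \sum_{Ks} a_{g,h}^s 1_{Ks}$ with only finitely many nonzero terms, so left convolution by any element of $\Z[G,K]$ is a well-defined $\Z$-linear endomorphism of $\Z[K\bs G]$; and since left and right convolution commute (associativity of convolution), this endomorphism is right $G$-equivariant. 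This gives a ring homomorphism $\Phi\colon \Z[G,K]\to \hom_\Z(\Z[K\bs G],\Z[K\bs G])^G$, where I should be careful that $\Phi$ respects multiplication in the correct order — convolution is associative, so $\Phi(f_1\ast f_2) = \Phi(f_1)\circ\Phi(f_2)$, matching composition.

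Next I would construct the inverse. Given a right $G$-equivariant endomorphism $T$ of $\Z[K\bs G]$, equivariance forces $T$ to be determined by the single value $T(1_K)$: indeed every basis element is $1_K\cdot g = 1_{Kg}$ for a suitable $g$, so $T(1_{Kg}) = T(1_K)\cdot g = T(1_K)\ast 1_{Kg}$ once we observe that right translation $1_{Ks}\mapsto 1_{Ksg}$ coincides with right convolution by $1_{Kg}$. Writing $T(1_K) = \sum_{Ks} c_s 1_{Ks}$ (a finite sum, since $T(1_K)$ lies in the free module), I must check that this element actually lies in the subspace spanned by the $1_{KgK}$, i.e. that the coefficient function $s\mapsto c_s$ is constant on each double coset $KsK$. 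This is where the \emph{left} $K$-invariance enters: because $1_K$ is fixed by right translation by every $k\in K$ (as $Kk = K$), $T(1_K) = T(1_K\cdot k) = T(1_K)\cdot k$, so $T(1_K)$ is right-$K$-invariant, which says exactly that $c_s = c_{sk}$ for all $k\in K$; combined with the fact that $T(1_K)$ automatically lies in $\Z[K\bs G]$ (left $K$-invariance built into the basis), we get that $T(1_K)$ is $K$-bi-invariant, hence an element of $\Z[G,K]$. Then the assignment $\Psi\colon T\mapsto T(1_K)$ is the candidate inverse.

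Then I would verify $\Phi$ and $\Psi$ are mutually inverse: $\Psi(\Phi(f)) = \Phi(f)(1_K) = f\ast 1_K = f$ since $1_K$ is the unit of $\cH(G,K)$ (and of $\Z[G,K]$); and $\Phi(\Psi(T)) = \Phi(T(1_K))$ acts on a basis element $1_{Kg}$ by $T(1_K)\ast 1_{Kg} = T(1_K)\cdot g = T(1_K\cdot g) = T(1_{Kg})$ using $G$-equivariance of $T$ and the identification of right translation with right convolution, so $\Phi(\Psi(T)) = T$. Since both maps are clearly additive and we have already seen $\Phi$ is multiplicative, this completes the proof that $\Phi$ is a ring isomorphism.

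The only genuinely delicate point — and the one I would state carefully — is the finiteness: one needs that left convolution by a fixed $1_{KgK}$ sends each basis vector to a finite $\Z$-linear combination of basis vectors, so that it really defines an endomorphism of the (non-completed) free module $\Z[K\bs G]$, and conversely that an arbitrary $G$-equivariant endomorphism of $\Z[K\bs G]$ has $T(1_K)$ supported on finitely many cosets; the first is exactly the observation in the excerpt that $KgK\cap sh^{-1}K$ is nonempty for only finitely many $Ks$ (compactness of $K$, equivalently $KgK$ being a finite union of right $K$-cosets), and the second is automatic because $\Z[K\bs G]$ is the direct sum, not the product, of copies of $\Z$. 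Everything else is a formal manipulation with the convolution product and the two-sided invariance conditions.
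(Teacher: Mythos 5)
Your proof is correct and follows essentially the same route as the paper's: show left convolution gives a homomorphism into the ring of $G$-equivariant endomorphisms, and recover any such endomorphism from its value at $1_K$, which is automatically $K$-bi-invariant because $1_K$ is $K$-fixed. The paper phrases this as injectivity plus surjectivity rather than exhibiting an explicit inverse, but the content is identical.

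One small imprecision worth flagging: the ``observation'' that right translation $1_{Ks}\mapsto 1_{Ksg}$ on $\Z[K\bs G]$ coincides with right convolution by $1_{Kg}$ is false in general. Indeed $1_{Ks}\ast 1_{Kg}$ is supported on all of $KsKg$ (with coefficients that need not even be integral unless $s$ normalises $K$), whereas right translation lands exactly in $Ksg$; the two agree only when the left factor is $K$-bi-invariant. You never actually rely on the general claim --- the two places you invoke it, the left factor is $T(1_K)\in\Z[G,K]$, where the identity does hold --- but as stated it is wrong and could mislead. Likewise, the $G$-equivariance of left convolution is better justified by a direct change of variables in the convolution integral (or by noting that right translation is right convolution by the Dirac mass $\delta_g$, not by $1_{Kg}$), rather than by invoking associativity of convolution of $1_{Kg}$ with elements of $\Z[K\bs G]$. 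The paper sidesteps both issues by working only with the right translation action and never mentioning right convolution.
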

\begin{proof}
It is clear that there is a natural homomorphism $$\varphi \colon \Z[K \bs G/K] \to \hom_{\Z}(\Z[K \bs G],\Z[K \bs G])^{G},$$
which is given by convolution on the left. Since the value of $\varphi(a)$ on the trivial coset $1_K \in \Z[K \bs G]$ allows to recover $a \in \Z[K \bs G/K]$, the map $\varphi$ is injective. Moreover, by $G$-equivariance, any element  $$\alpha \in \hom_{\Z}(\Z[K \bs G],\Z[K \bs G])^{G}$$ is determined by its value $\alpha(1_K)$ on the trivial coset. Now, $\alpha(1_K) = \sum_{Ks \in K \bs G} a_{Ks} \cdot 1_{Ks}$. But $a_{Ks}$ is constant on double cosets $K \bs G /K$, since $\alpha$ is $G$-equivariant and $1_K$ is fixed by $K$. This shows that $$\alpha(1_K) = \sum_{KsK \in K \bs G/K} a_{KsK} \cdot 1_{KsK}$$ and proves the claim.
\end{proof}

Note that the augmentation homomorphism $\varepsilon \colon \Z[G , K] \to \Z$ which is given by $\varepsilon(1_{KgK}):= \nu(KgK)$ defines a $\Z[G , K]$-module structure on $\Z$.

\subsection{Homological algebra for the Hecke algebra}\label{sub: homological algebra}

Let $\mathcal{K}(G)$ denote the set of compact open subgroups of $G$ partially ordered by inclusion.

A (possibly non-unital) ring $R$ is called \emph{idempotented} if for every finite set of elements $S\subset R$ there 
is an idempotent $q\in R$ such that $qx=xq=x$ for all $x\in S$. A (left) $R$-module $M$ is \emph{non-degenerate} if 
$M=R\cdot M$. It is easy to see that the category of non-degenerate $R$-modules is an abelian category. For an idempotent $q\in R$ the $R$-module $Rq$ is projective in this category. Since every element in a non-degenerate $R$-module $M$ is in the image of a homomorphism $Rq\to M$ for some idempotent~$q$, the category of non-degenerate $R$-modules has enough projectives. Similarly there are enough injectives. So the derived functors of the hom and tensor product functors are available, and 
the usual notions and tools from homological algebra still work for idempotented rings. See~\cite[Chapter~XII]{borel+wallach}. The same discussion applies for 
right $\cH(G)$-modules. 
The Hecke algebra $\cH(G)$ is idempotented; it is the union of $1_K\cH(G)1_K=\cH(G,K)$, $K\in \mathcal{K}(G)$.

For a discrete subgroup $\Gamma<G$, we may now consider $\cH(G)$ as a $\cH(G)$-$\C[\Gamma]$-bimodule and use it to formulate a suitable Shapiro lemma for homology. Moreover, the induction by $\cH(G)$ can be explicitly computed in special cases, for example $\cH(G) \otimes_{\C[\Gamma]} \C = \cH(G/\Gamma)$, where $\cH(G/\Gamma)$ denotes the vector space of locally constant and compactly supported functions on the homogenous space $G/\Gamma$. 

\begin{lemma}[Shapiro]\label{lem: shapiro} Let $\Gamma$ be a discrete subgroup in $G$ and let $M$ be a left $\Gamma$-module. There is a natural isomorphism
$$H_*(\Gamma,M) \stackrel{\sim}{\to} {\rm Tor}_*^{\cH(G)}(\C,\cH(G) \otimes_{\C[\Gamma]} M).$$
\end{lemma}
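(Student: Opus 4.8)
The plan is to realize the group homology $H_*(\Gamma, M)$ via a projective resolution over $\C[\Gamma]$ and transport it through the functor $\cH(G) \otimes_{\C[\Gamma]} (-)$ to compute the Tor on the right-hand side. First I would pick a projective (indeed free) resolution $P_\bullet \to \C$ by $\C[\Gamma]$-modules, so that by definition $H_*(\Gamma, M) = H_*(P_\bullet \otimes_{\C[\Gamma]} M)$. Tensoring the resolution with $M$ over $\C[\Gamma]$ and then tensoring further with the bimodule $\cH(G)$ gives $\cH(G) \otimes_{\C[\Gamma]} (P_\bullet \otimes_{\C[\Gamma]} M) \cong (\cH(G) \otimes_{\C[\Gamma]} P_\bullet) \otimes_{\C[\Gamma]} M$; the point is that $\cH(G) \otimes_{\C[\Gamma]} P_\bullet$ should serve as a resolution of $\cH(G) \otimes_{\C[\Gamma]} \C$ by non-degenerate projective $\cH(G)$-modules, at which stage the homology of $\bigl(\cH(G) \otimes_{\C[\Gamma]} P_\bullet\bigr) \otimes_{\C[\Gamma]} M = \bigl(\cH(G) \otimes_{\C[\Gamma]} P_\bullet\bigr) \otimes_{\cH(G)} \bigl(\cH(G)\otimes_{\C[\Gamma]} M\bigr)$ computes $\tor_*^{\cH(G)}(\C, \cH(G)\otimes_{\C[\Gamma]} M)$ once we identify $\cH(G)\otimes_{\C[\Gamma]}\C$ with the trivial module $\C$ over the idempotented ring $\cH(G)$ via the augmentation.

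The key steps in order: (i) verify that $\cH(G)$ is flat as a right $\C[\Gamma]$-module — since $\Gamma$ is discrete in $G$, $\cH(G)$ decomposes as a direct sum of copies of $\C[\Gamma]$ indexed by the double cosets $K \bs G / \Gamma$ for any fixed compact-open $K$ (more precisely $\cH(G) 1_K$ is a free right $\C[\Gamma]$-module on the $K$-cosets meeting each $\Gamma$-orbit once), hence flatness holds and exactness is preserved; (ii) conclude that $\cH(G) \otimes_{\C[\Gamma]} P_\bullet$ is a resolution of $\cH(G) \otimes_{\C[\Gamma]} \C$, and check that each $\cH(G) \otimes_{\C[\Gamma]} \C[\Gamma]^{(I)} = \cH(G)^{(I)}$ is non-degenerate projective over the idempotented ring $\cH(G)$ — it lies in the category of non-degenerate modules and is a summand, or a sum, of free objects $\cH(G)q$, so it is acyclic for $\tor^{\cH(G)}$; (iii) identify $\cH(G) \otimes_{\C[\Gamma]} \C$ with $\cH(G/\Gamma)$ and observe this is the trivial $\cH(G)$-module $\C$ — here one uses the augmentation $\varepsilon$ and the normalization $\nu(K)=1$, noting that the non-degenerate module $\cH(G/\Gamma)$ together with the $G$-action identifies, after applying $\tor_0$, with $\C$; (iv) assemble the natural isomorphism by the standard balancing/associativity of Tor and check naturality in $M$.

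The main obstacle I expect is step (ii), specifically verifying that the induced modules $\cH(G)\otimes_{\C[\Gamma]} P_\bullet$ really are projective \emph{in the category of non-degenerate $\cH(G)$-modules} — one must be careful that $\cH(G)\otimes_{\C[\Gamma]}\C[\Gamma] = \cH(G)$ is itself not unital, and show it is a (possibly infinite) direct sum of modules of the form $\cH(G)q$ for idempotents $q = 1_K$, which are the projective generators named in the excerpt. This requires choosing the free $\C[\Gamma]$-resolution $P_\bullet$ with care (e.g. using the bar resolution or a $\Gamma$-CW model) so that after induction the terms are visibly direct sums over coset spaces of copies of $\cH(G)$, and then invoking that each such copy is non-degenerate with $\cH(G) = \bigcup_K \cH(G)1_K$. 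A secondary subtlety is confirming that $\C \cong \cH(G)\otimes_{\C[\Gamma]}\C$ as $\cH(G)$-modules is the \emph{correct} first argument for the Tor — this is where the earlier remark that $\cH(G)\otimes_{\C[\Gamma]}\C = \cH(G/\Gamma)$ is used, together with the augmentation $\varepsilon\colon \Z[G,K]\to\Z$ extended to $\cH(G)$, to see that $\tor_0^{\cH(G)}(\C, -)$ of the induced module recovers $M_\Gamma$-coinvariants in the appropriate sense. Everything else is formal homological algebra over idempotented rings, as recalled in the excerpt following \cite[Chapter~XII]{borel+wallach}.
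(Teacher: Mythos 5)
There is a genuine gap at the heart of your strategy. You induce a $\C[\Gamma]$-resolution $P_\bullet$ of the \emph{trivial} module $\C$ up to $\cH(G)$ and then claim, in your step (iii), that $\cH(G)\otimes_{\C[\Gamma]}\C$ ``is the trivial $\cH(G)$-module $\C$''. It is not: as recalled immediately before the lemma, $\cH(G)\otimes_{\C[\Gamma]}\C\cong\cH(G/\Gamma)$, the (generally infinite-dimensional) module of locally constant, compactly supported functions on $G/\Gamma$; this is the trivial module only when $\Gamma=G$. Consequently $\cH(G)\otimes_{\C[\Gamma]}P_\bullet$ is a resolution of $\cH(G/\Gamma)$ and not of the first Tor argument $\C$, so your complex does not compute $\tor_*^{\cH(G)}(\C,\cH(G)\otimes_{\C[\Gamma]}M)$. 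There is also a sidedness clash you cannot repair: for $H_*(\Gamma,M)=H_*(P_\bullet\otimes_{\C[\Gamma]}M)$ you need $P_\bullet$ to consist of \emph{right} $\C[\Gamma]$-modules, whereas $\cH(G)\otimes_{\C[\Gamma]}P_\bullet$ requires \emph{left} modules and produces a complex of left $\cH(G)$-modules, which can never resolve the right module $\C$ sitting in the first slot of the Tor. Induction and restriction are not inverse to each other, and your plan tacitly treats them as if they were.

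The correct and much shorter route, whose essential input your step (i) already contains, is to resolve the \emph{second} argument: show that $\cH(G)$ is flat as a right $\C[\Gamma]$-module, take a projective resolution $Q_\bullet\to M$ of left $\C[\Gamma]$-modules, observe that $\cH(G)\otimes_{\C[\Gamma]}Q_\bullet$ is then a flat (hence Tor-acyclic) resolution of $\cH(G)\otimes_{\C[\Gamma]}M$ by non-degenerate left $\cH(G)$-modules, and conclude from $\C\otimes_{\cH(G)}\cH(G)\otimes_{\C[\Gamma]}Q_\bullet\cong\C\otimes_{\C[\Gamma]}Q_\bullet$. Concerning the flatness itself, your parenthetical claim that $\cH(G)1_K$ is a \emph{free} right $\C[\Gamma]$-module is false whenever some conjugate of $K$ meets $\Gamma$ nontrivially: $1_K\cH(G)\cong\C[K\bs G]$ and $K\bs G$ is a right $\Gamma$-set whose stabilisers $K^s\cap\Gamma$ are finite but need not be trivial. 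In characteristic zero such a permutation module is still projective, and $\cH(G)$ is the \emph{directed colimit} of the $1_K\cH(G)$ over $K\in\mathcal{K}(G)$ (not a direct sum over a single fixed $K$, as you write), hence flat. This is exactly the argument the paper gives.
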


\begin{proof}
	It suffices to prove that $\cH(G)$ is a flat right $\C[\Gamma]$-module. For a compact open subgroup $K<G$ the 
	right $\C[\Gamma]$-module $1_K\cH(G) $ is isomorphic to $\C[K\backslash G ]$ where $K\backslash G $ is a right 
	$\Gamma$-set with finite stabilisers. Such a $\C[\Gamma]$-module is projective. 
	Since $\cH(G)$ is the directed colimit 
	of right $\C[\Gamma]$-modules $1_K\cH(G)$ over $K\in\mathcal{K}(G)$, it is flat. 	
\end{proof}

\section{Spectral approximation in Hecke algebras}

Throughout, let $K<G$ be a compact open subgroup. We normalise the Haar measure $\nu$ on $G$ so that $\nu(K)=1$. 
For a lattice $\Gamma<G$ we denote by $\nu_{G/\Gamma}$ the finite measure on $X:=G/\Gamma$ induced by $\nu$. 
We do not normalise $\nu_{G/\Gamma}$; its total mass is $\covol_\nu(\Gamma)$, the 
$\nu$-covolume of~$\Gamma$. 

\subsection{Positive functionals on the Hecke algebra}\label{sub: positive functionals}

We denote by 
\[\bar{X} := K\backslash X = K \bs G / \Gamma\]
the countable set of double cosets and 
note that $\bar X$ carries a finite measure which gives the double coset $Ks\Gamma \in K \bs G / \Gamma$ the measure of its equivalence class $Ks\Gamma$, seen as a subset of $X$. 
Note that the canonical map $K \to Ks\Gamma$, $k \mapsto ks\Gamma$ is a finite covering with fibre of size equal to $|K \cap s\Gamma s^{-1}|$. Hence, we obtain $\nu_{G/\Gamma}(Ks\Gamma) = |K \cap s\Gamma s^{-1}|^{-1}$.
Thus, since $\Gamma$ is a lattice, we conclude
\begin{equation}\label{eq: covolume}
	\sum_{Ks\Gamma} \frac{1}{|K \cap s\Gamma s^{-1}|} = \sum_{Ks\Gamma\in \bar X}\nu_{G/\Gamma}(Ks\Gamma)=\nu_{G/\Gamma}(X)= \operatorname{covol}_{\nu}(\Gamma) < \infty.
\end{equation}
Further, let $\bar{X}_{e}$ be the set of double cosets $Ks\Gamma \in \bar{X}$ such that the map $K\owns k\mapsto ks\Gamma\in X$ is injective or, equivalently,
 \[K \cap s\Gamma s^{-1} = \{e\}.\]
 In particular, for each point $Ks\Gamma\in \bar{X}_{e}$, the corresponding equivalence class $Ks\Gamma \subseteq X$ has measure $\nu_{G/\Gamma}(Ks\Gamma) = 1$. It follows that the set $\bar{X}_{e}$ is finite, with cardinality 
\begin{equation}\label{eq: cardinality}
|\bar{X}_{e}| \leq \operatorname{covol}_{\nu}(\Gamma).
\end{equation}
More precisely, we obtain
\[|\bar{X}_{e}| = \nu_{G/\Gamma}(\{ s\Gamma \mid K \cap s\Gamma s^{-1} = \{e\} \}).\]
Thus, we see that $\ell^2(K\bs G/\Gamma,\nu)$ contains an isometric copy $\ell^2(\bar X_e)$. 
\begin{definition}
	We denote the projection of $\ell^2(K\bs G/\Gamma,\nu)$ onto the subspace $\ell^2(\bar X_e)$ by $$P_{\Gamma}\colon \ell^2(K\bs G/\Gamma,\nu)\to\ell^2(\bar X_e).$$ 
\end{definition}We want to understand in what sense the quotients of the form $K \bs G/\Gamma$ approximate $K \bs G$ as $\Gamma$ varies. Our emphasis is on the non-uniform case where special care is needed.
Indeed, we would like to consider a functional 
\[\tr_{\Gamma} \colon \cH(G,K) \to \C,\]
of the form
\[\tr_{\Gamma}(f) = \frac1{\operatorname{covol}_{\nu}(\Gamma)}\int_{G/\Gamma} \sum_{\gamma \in \Gamma} f(s \gamma s^{-1})\ d\nu_{G/\Gamma}(s),\]
which means more concretely
\[{\rm tr}_{\Gamma}(1_{KgK}) := \frac1{\operatorname{covol}_{\nu}(\Gamma)}\int_{G/\Gamma} |KgK \cap s\Gamma s^{-1}|\ d \nu_{G/\Gamma}(s).\]
If $\Gamma$ is a uniform lattice, then ${\rm tr}_{\Gamma}$ defines a (usually unnormalised) positive trace on $\cH(G,K)$, that can be used to relate the spectral properties of the action of the Hecke algebra on $\ell^2(K\bs G/\Gamma,\nu)$ to those of the action on $\ell^2(K \bs G)$.
However, since $\nu_{G/\Gamma}(Ks\Gamma) = |K \cap s\Gamma s^{-1}|^{-1}$, the integral for ${\rm tr}_{\Gamma}(1_K)$ will be infinite if the lattice is non-uniform. Thus, this approach is of no use in the general case.

In order to overcome this problem, we have to perform a renormalization procedure that concentrates on double cosets of full measure and control its defect on small double cosets. Note that the subspace of $L^2(G/\Gamma,\nu_{G/\Gamma})$ formed of $K$-invariant functions, which is spanned by a set of orthogonal functions $\{1_{Ks\Gamma} \mid Ks \Gamma \in \bar X \}$ is just $\ell^2(K\bs G/\Gamma,\nu)$, i.e. the weighted $\ell^2$-space on the set $K \bs G / \Gamma$ with weights as given above.
The Hilbert space $\ell^2(K\bs G/\Gamma,\nu)$ is endowed with an action 
\[\pi_\Gamma \colon \cH(G,K) \to B(\ell^2(K\bs G/\Gamma,\nu))\] 
given by convolution. We have 
\begin{equation*}
\langle \pi_\Gamma(1_{KgK})(1_{Kh\Gamma}), 1_{Ks\Gamma} \rangle=  \langle 1_{KgK} \ast 1_{Kh\Gamma}, 1_{Ks\Gamma}\rangle = \nu(KgK\cap s\Gamma h^{-1}K)\in\Z, 
\end{equation*}
where integrality follows from $KgK\cap s\Gamma h^{-1}K$ being 
a finite union of right $K$-cosets. We record for later: 

\begin{remark}\label{rem: integrality}
	Hence the action of $1_{KgK}$ on $\ell^2(K\bs G/\Gamma,\nu)$ 
	is given, with respect to the canonical basis consisting of indicator functions, by an infinite matrix  with integer entries and only finitely many non-zero entries in each row and column.
\end{remark}
	\begin{definition}\label{def: positive functional}
	The map 
\[ \varphi_\Gamma\colon\cH(G,K)\to\C,~~\varphi_{\Gamma} (T):= \frac{\sum_{Ks\Gamma\in \bar X} \langle \pi_\Gamma(T)(1_{Ks\Gamma}), 1_{Ks\Gamma} \rangle}{\operatorname{covol}_{\nu}(\Gamma)}.\]
defines a unital and positive linear functional on $\cH(G,K)$. The map 
\[ \varphi^e_\Gamma (1_{KgK})=\frac{\sum_{Ks\Gamma\in \bar X_e} \langle \pi_\Gamma(T)(1_{Ks\Gamma}), 1_{Ks\Gamma} \rangle}{\operatorname{covol}_{\nu}(\Gamma)}=\frac{\sum_{Ks\Gamma\in \bar X_e} \langle P_\Gamma\pi_\Gamma(T)P_\Gamma^\ast(1_{Ks\Gamma}), 1_{Ks\Gamma} \rangle}{\operatorname{covol}_{\nu}(\Gamma)}.\]
defines a (possibly non-unital) positive linear functional on $\cH(G,K)$. 
\end{definition} 

One easily verifies that 
\begin{equation}\label{rem: functional concrete}
\varphi_{\Gamma}(1_{KgK}) = \frac{\sum_{Ks\Gamma\in \bar X} \nu(KgK\cap s\Gamma s^{-1}K) \nu_{G/\Gamma}(Ks\Gamma)}{\operatorname{covol}_{\nu}(\Gamma)} \in [0,1].
\end{equation}
A similar identity holds for $\varphi_{\Gamma}^e(1_{KgK})$ with the sum only running over $Ks\Gamma\in \bar X_e$. 
This time, we obtain a positive functional $\varphi_\Gamma$ resembling the spectral properties of the action on $\ell^2(K\bs G/\Gamma,\nu)$,  no matter if $\Gamma$ is uniform or not.

\vspace{0.1cm}

The functionals $\tr$ and $\phi_{\Gamma_i}$ and $\phi_\Gamma^e$ (by first taking the matrix trace and then the functional) as well as other notions discussed so far extend to matrix algebras $M_n(\cH(G,K))$ for $n \in \N$ -- and we will use the notation introduced so far unchanged in the setting of matrix algebras.

\subsection{Spectral approximation for Farber sequences}
We are now finished with our preparations and will proceed by stating and proving the first main theorem.
Let $(\Gamma_i)_{i \in \N}$ be a sequence of lattices in $G$. Let $T\in M_n(\cH(G,K))$ be self-adjoint.  
By the Riesz representation theorem there is a unique Borel measure~$\mu_T$, called the \emph{spectral measure} with respect to $\tr$,  on $\R$ such that 
\[\tr(T^k) = \int_{\R} t^k\ d\mu_T(t).\]
Similarly, one defines the spectral measures $\mu_{T, i}$ and $\mu_{T,i}^e$ with respect to $\phi_{\Gamma_i}$ and 
$\phi_{\Gamma_i}^e$. The measure $\mu_T$ is supported in the interval 
$[-\lVert \pi(T)\rVert, \lVert \pi(T)\rVert]$. The measures $\mu_{T,i}$ and $\mu_{T,i}^e$ 
are supported in the interval $[-\lVert \pi_{\Gamma_i}(T)\rVert, \lVert \pi_{\Gamma_i}(T)\rVert]$.
Since the operator norms $\pi(T)$ and $\pi_{\Gamma_i}(T)$ are bounded 
by the maximum of the (obvious) $\ell^1$-norms of the entries of $T$ 
times $n^2$ (cf.~\cite{lueck-book}*{Lemma~13.33 on p.~470}), 
all three measures are supported on the compact interval $[-c, c]$ 
with 
\begin{equation}\label{eq: norm bound}
	c:=\max\{\lVert T_{i,j}\rVert_1\mid i,j\in\{1,\ldots, n\}\}\cdot n^2.
\end{equation}
Moreover, $\mu_T$ and $\mu_{T,i}$ are probability measures while the total mass 
of $\mu_{T,i}^e$ may be less than~$1$. 

\begin{theorem} \label{thm:spectral_appr}
Let $(\Gamma_i)_{i \in \N}$ be a Farber sequence of lattices in $G$. Let 
$T\in M_n(\Z[G,K])$ be a self-adjoint element. 
\begin{enumerate}
\item[$(i)$]
The sequences of measures $(\mu_{T,i})$ and $(\mu_{T,i}^e)$ both weakly converge 
to $\mu_T$. 
\item[$(ii)$] We have  
$\mu_{T}(\{0\})= \lim_{i\to\infty} \mu_{T,i}(\{0\})=\lim_{i\to\infty} \mu_{T,i}^e(\{0\}).$
\item[$(iii)$] Assume in addition that $T=S^\ast S$ is positive. 
Then we have 
\begin{equation} 
\mu_{T}(\{0\})=\lim_{i\to\infty}\frac{\dim_\C \ker(P_{\Gamma_i}\pi_{\Gamma_i}(T)P_{\Gamma_i}^\ast)}{\covol_\nu(\Gamma_i)} \le  \liminf_{i \to \infty} \frac{\dim_{\C} \ker(\pi_{\Gamma_i}(T))}{\operatorname{covol}_{\nu}(\Gamma_i)}\nonumber	
\end{equation}
with equality provided that the sequence $(\Gamma_i)_{i \in \mathbb N}$ is uniformly discrete and $K$ sufficiently small. 
\end{enumerate}
\end{theorem}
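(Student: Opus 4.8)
The plan is to reduce everything to a moment computation. For part $(i)$, it suffices by the Stone–Weierstrass theorem to show that for each fixed $k\in\N$ we have $\phi_{\Gamma_i}(T^k)\to\tr(T^k)$ and $\phi_{\Gamma_i}^e(T^k)\to\tr(T^k)$, since weak convergence of measures supported on the fixed compact interval $[-c,c]$ from \eqref{eq: norm bound} is detected by convergence of all polynomial moments. The point is that $T^k\in M_n(\Z[G,K])$ is again an integral element, so it is a finite $\Z$-linear combination of elementary matrices with entries among the $1_{KgK}$; by linearity it is enough to treat a single $1_{KgK}$. Using the concrete formula \eqref{rem: functional concrete}, $\phi_{\Gamma_i}(1_{KgK})$ is an average over $Ks\Gamma_i\in\bar X$, weighted by $\nu_{G/\Gamma_i}(Ks\Gamma_i)$, of the integers $\nu(KgK\cap s\Gamma_i s^{-1}K)=|KgK/K\cap s\Gamma_i s^{-1}K/K|$, normalised by $\covol_\nu(\Gamma_i)$. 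Meanwhile $\tr(1_{KgK})=[g\in K]$ by Definition \ref{def: trace on hecke algebra} and the identity $\tr(T)=\langle\pi(T)1_K,1_K\rangle$. So I must show that the $s$ with $s\Gamma_i s^{-1}$ meeting the finitely many non-trivial cosets inside $KgK$ contribute a vanishing fraction of the total mass $\covol_\nu(\Gamma_i)$ — this is exactly what the Farber condition (Definition \ref{defn: farber condition}) gives, after splitting $KgK$ into its finitely many right $K$-cosets $Ks_1,\dots,Ks_m$ and applying the hypothesis to each coset $C=Ks_j$. The contribution of the identity coset (present only when $g\in K$) accounts for the limit $[g\in K]=\tr(1_{KgK})$. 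Thus $\phi_{\Gamma_i}(1_{KgK})\to\tr(1_{KgK})$; since $\bar X_e\subset\bar X$ and the complement $\bar X\setminus\bar X_e$ again has vanishing normalised mass by the Farber condition, the same limit holds for $\phi_{\Gamma_i}^e$, proving $(i)$.

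For part $(ii)$, the atom at $0$ is not continuous for weak convergence, so $(i)$ alone does not suffice. The standard device is a two-sided estimate: for any continuous $f$ with $[t=0]\le f\le 1$ supported near $0$ one gets $\limsup_i\mu_{T,i}(\{0\})\le\limsup_i\int f\,d\mu_{T,i}=\int f\,d\mu_T$, and letting $f$ shrink to $[t=0]$ gives $\limsup_i\mu_{T,i}(\{0\})\le\mu_T(\{0\})$ — this is Kazhdan's inequality and holds in general. For the reverse inequality one exploits integrality: by Remark \ref{rem: integrality} the operators $\pi_{\Gamma_i}(T)$ are represented by integer matrices, so their spectral measures satisfy a uniform lower bound on the spectral mass away from $0$ — more precisely, the non-zero eigenvalues of a self-adjoint integer matrix of bounded norm stay a definite distance $\delta(c)>0$ from $0$ in a logarithmic-integral sense, which yields $\mu_{T,i}([-\epsilon,\epsilon]\setminus\{0\})\to 0$ uniformly in $i$ as $\epsilon\to 0$. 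This is the totally disconnected analogue of L\"uck's original argument and is where the bulk of the work sits; combined with weak convergence it pins down $\liminf_i\mu_{T,i}(\{0\})\ge\mu_T(\{0\})$. The same applies to $\mu_{T,i}^e$ via the compressed matrices $P_{\Gamma_i}\pi_{\Gamma_i}(T)P_{\Gamma_i}^\ast$, which have entries in a set bounded independently of $i$ (their entries are among the $\nu(KgK\cap\cdot)$, hence bounded by $c$), so they too are integer matrices of bounded norm.

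For part $(iii)$, when $T=S^\ast S\ge 0$ the atom $\mu_{T,i}^e(\{0\})$ is, by definition of the spectral measure associated to $\phi_{\Gamma_i}^e$ and the identification with a genuine matrix trace, exactly $\dim_\C\ker(P_{\Gamma_i}\pi_{\Gamma_i}(T)P_{\Gamma_i}^\ast)/\covol_\nu(\Gamma_i)$; together with $(ii)$ this gives the displayed equality $\mu_T(\{0\})=\lim_i\dim_\C\ker(P_{\Gamma_i}\pi_{\Gamma_i}(T)P_{\Gamma_i}^\ast)/\covol_\nu(\Gamma_i)$. The inequality $\le\liminf_i\dim_\C\ker(\pi_{\Gamma_i}(T))/\covol_\nu(\Gamma_i)$ follows because compressing a positive operator by a projection can only enlarge the kernel: $\ker(\pi_{\Gamma_i}(T))\cap\ell^2(\bar X_{e,i})\subseteq\ker(P_{\Gamma_i}\pi_{\Gamma_i}(T)P_{\Gamma_i}^\ast)$ is false in that direction, so instead I compare dimensions via $\dim\ker(P\,A\,P^\ast)\le\dim\ker(A)+\dim(\ell^2(K\bs G/\Gamma_i)\ominus\ell^2(\bar X_{e,i}))$ is also not quite it — the correct route is that for $A\ge 0$ one has $\ker(PAP^\ast)\supseteq P(\ker A)$, giving $\dim\ker(PAP^\ast)\ge\dim\ker(A)-\mathrm{codim}(\mathrm{range}\,P^\ast)$; but we want an upper bound on the left side, which instead comes from $\mu_{T,i}^e(\{0\})\le\mu_{T,i}(\{0\})$ as positive functionals with $\phi_{\Gamma_i}^e\le\phi_{\Gamma_i}$, i.e.\ the compressed spectral atom is dominated by the full one, hence $\mu_T(\{0\})=\lim_i\mu_{T,i}^e(\{0\})\le\liminf_i\mu_{T,i}(\{0\})=\liminf_i\dim_\C\ker(\pi_{\Gamma_i}(T))/\covol_\nu(\Gamma_i)$. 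Finally, under uniform discreteness with $K$ small enough the family of compact-open subgroups $K\cap s\Gamma_i s^{-1}$ is trivial for \emph{all} $s$, so $\bar X_{e,i}=\bar X_i$, the projections $P_{\Gamma_i}$ are the identity, and $\phi_{\Gamma_i}^e=\phi_{\Gamma_i}$; equality follows. I expect the main obstacle to be the uniform spectral-gap estimate in $(ii)$: one needs the integrality of the compressed matrices $P_{\Gamma_i}\pi_{\Gamma_i}(T)P_{\Gamma_i}^\ast$ together with a clean bound on their size and norm, uniform in $i$, to run L\"uck's determinant-style argument, and verifying these entries are genuinely integer-valued (not merely bounded) is the delicate point where Remark \ref{rem: integrality} must be used carefully.
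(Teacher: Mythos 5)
Your overall strategy for parts $(i)$ and $(ii)$ tracks the paper's proof closely: reduce $(i)$ to moment convergence using the Farber condition on each non-trivial $K$-coset, and for $(ii)$ combine Portmanteau with a L\"uck determinant argument on the compressed integer matrices. One caution in $(ii)$: you begin by asserting that ``the operators $\pi_{\Gamma_i}(T)$ are represented by integer matrices, so their spectral measures satisfy a uniform lower bound...'', which would run the determinant trick on $\mu_{T,i}$ itself. This does not work: $\ell^2(K\bs G/\Gamma_i,\nu)$ is infinite-dimensional for non-uniform lattices, and the indicator functions $1_{Ks\Gamma_i}$ there are orthogonal but have norms $\nu_{G/\Gamma_i}(Ks\Gamma_i)^{1/2}<1$, so the ``integer matrix'' of Remark~\ref{rem: integrality} is not expressed in an orthonormal basis and the eigenvalue/determinant count that L\"uck's mechanism relies on is not available. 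The paper applies the determinant estimate only to $P_{\Gamma_i}\pi_{\Gamma_i}(T)P_{\Gamma_i}^\ast$ on $\ell^2(\bar X_e)$ (where the basis \emph{is} orthonormal, the matrix is genuinely finite with integer entries and size $\le\covol_\nu(\Gamma_i)$), and then deduces all three equalities in $(ii)$ from the sandwich of inequalities $\liminf\mu_{T,i}^e(\{0\})\le\liminf\mu_{T,i}(\{0\})\le\limsup\mu_{T,i}(\{0\})\le\mu_T(\{0\})\le\liminf\mu_{T,i}^e(\{0\})$. You effectively say this in your second half, but the first sentence is an overclaim that should be struck.

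The genuine gap is in $(iii)$. You conclude by writing
\[\mu_T(\{0\})=\lim_i\mu_{T,i}^e(\{0\})\le\liminf_i\mu_{T,i}(\{0\})=\liminf_i\frac{\dim_\C\ker(\pi_{\Gamma_i}(T))}{\covol_\nu(\Gamma_i)},\]
and the final \emph{equality} is false in general. For the non-uniform case, $\phi_{\Gamma_i}$ is not the normalised matrix trace on $\ell^2(K\bs G/\Gamma_i,\nu)$ — it is a weighted diagonal sum with weights $\nu_{G/\Gamma_i}(Ks\Gamma_i)\le 1$ — so $\mu_{T,i}(\{0\})=\phi_{\Gamma_i}(E_0)$ need not equal $\dim_\C\ker(\pi_{\Gamma_i}(T))/\covol_\nu(\Gamma_i)$; one can only conclude $\le$, and even that requires the argument $\nu_{G/\Gamma_i}(Ks\Gamma_i)\le 1$ and orthonormalisation, which you do not supply. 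The paper avoids this entirely by a direct, elementary dimension comparison: writing $T=S^\ast S$, the map $P_{\Gamma_i}^\ast$ is the isometric inclusion $\ell^2(\bar X_e)\hookrightarrow\ell^2(K\bs G/\Gamma_i,\nu)$, so $\ker(P_{\Gamma_i}\pi_{\Gamma_i}(T)P_{\Gamma_i}^\ast)=\ker(\pi_{\Gamma_i}(S)P_{\Gamma_i}^\ast)$ injects into $\ker(\pi_{\Gamma_i}(S))=\ker(\pi_{\Gamma_i}(T))$, giving $\dim_\C\ker(P_{\Gamma_i}\pi_{\Gamma_i}(T)P_{\Gamma_i}^\ast)\le\dim_\C\ker(\pi_{\Gamma_i}(T))$ directly. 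You should replace your detour through $\mu_{T,i}(\{0\})$ with this one-line comparison. Your identification $\mu_{T,i}^e(\{0\})=\dim_\C\ker(P_{\Gamma_i}\pi_{\Gamma_i}(T)P_{\Gamma_i}^\ast)/\covol_\nu(\Gamma_i)$ and the uniformly-discrete-implies-$P_{\Gamma_i}=\id$ endgame are both correct and match the paper.
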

\begin{proof}
\noindent (i) Every $\phi_{\Gamma_i}$ is unital, so $\phi_{\Gamma_i}(1_K)=1$. 
For $g\in G$ we have 
\[ \nu (KgK\cap s\Gamma_i s^{-1}K)=\nu ((KgK\cap s\Gamma_i s^{-1})\cdot K) \le  \begin{cases} 
 				                               \nu(KgK) & \text{ if $KgK\cap s\Gamma_i s^{-1}\ne\emptyset $, }\\
 				                               0 & \text{ otherwise.}
 \end{cases}
\]
Hence, using Equation \ref{rem: functional concrete}, 
\begin{align*} 
\phi_{\Gamma_i}(1_{KgK})&\le\nu(KgK)\cdot \sum_{\substack{Ks\Gamma_i\in K\backslash G/\Gamma_i\\ KgK\cap s\Gamma_i s^{-1}\ne\emptyset}} \frac{\nu_{G/\Gamma_i}(Ks\Gamma_i)}{\operatorname{covol}_\nu(\Gamma_i)}\\
&=
    \nu(KgK)\cdot\frac{\nu_{G/\Gamma_i}\bigl(\{s\Gamma_i\mid KgK\cap s\Gamma_i s^{-1}\ne\emptyset\}\bigr)}{\operatorname{covol}_\nu(\Gamma_i)}\\
    &= \nu(KgK)\cdot\mu_{\Gamma_i}\bigl(\{H\in\sub_G\mid H\cap KgK\ne \emptyset\}\bigr)
\end{align*}
For $g\not\in K$ the latter tends to zero as $i\to\infty$ by the Farber condition as $KgK$ can be covered by finitely many non-trivial $K$-cosets. Since $\phi_{\Gamma_i}$ is unital we obtain that 
\begin{equation}\label{eq: elementwise convergence}
	\lim_{i\to\infty} \phi_{\Gamma_i}(1_{KgK}) = \tr(1_{KgK})\nonumber
\end{equation}
for every double coset $KgK$. Since $0\le \phi_{\Gamma_i}^e(1_{Kg\Gamma})\le \phi_{\Gamma_i}(1_{Kg\Gamma})$ we obtain also that 
\[\lim_{i\to\infty} \phi_{\Gamma_i}^e(1_{KgK}) = \tr(1_{KgK})=0~\text{ for $g\not\in K$.}\]
And we have 
\begin{align*} 
	\phi_{\Gamma_i}^e(1_K)~=~\sum_{{\substack{Ks\Gamma_i\\ K\cap s\Gamma_i s^{-1}=\{e\}}}}~\frac{\nu(Ks\Gamma_i)}{\covol_\nu(\Gamma_i)}&=\frac{\bigl|\{Ks\Gamma\in K\backslash G/\Gamma\mid K\cap s\Gamma_i s^{-1}=\{e\}\}\bigr |}{\covol_\nu(\Gamma_i)}\\
	&=\mu_{\Gamma_i}\bigl(\{H\in \sub_G\mid H\cap s\Gamma_i s^{-1}=\{e\}\}\bigr).
\end{align*}
By the Farber condition $\phi_{\Gamma_i}^e(1_K)$ tends to $\tr(1_K)=1$ as $i\to\infty$. 
This implies that the spectral measures $\mu_{T,i}$ and $\mu_{T,i}^e$ both converge to $\mu_T$ in moments. 
Since all the measures are supported on the same compact interval, $\mu_{T,i}$ and 
$\mu_{T,i}^e$ weakly 
converge to $\mu_T$. 

\noindent (ii) By the Portmanteau theorem weak convergence is equivalent  to 
\begin{equation}
\mu_T(E) \geq \limsup_i \mu_{T,i}(F), \quad \mu_T(U) \leq \liminf_i \mu_{T,i}(U). \nonumber
\end{equation}
for any closed subset $F\subset \R$ and any open subset $U\subset\R$. Similarly 
for $\mu_{T,i}^e$. In particular, we have 
$\limsup \mu_{T,i}(\{0\})\le \mu_T(\{0\})$ 
and $\limsup \mu_{T,i}^e(\{0\})\le \mu_T(\{0\})$. 
Clearly, $\liminf \mu_{T,i}^e(\{0\})\le \liminf \mu_{T,i}(\{0\})$. So it remains to 
show that 
\begin{equation}\label{eq: liminf ineq}
\liminf_{i\to\infty}\mu_{T,i}^e(\{0\}) \ge \mu_T(\{0\}). 	
\end{equation}
The basic mechanism for proving this builds on the integrality of $T$ and goes back to the work of L\"uck~\cite{Lu94}. We apply this mechanism in our setting. 

Let $c>0$ be as in~\eqref{eq: norm bound}. 
Fix $i\in\N$. It is clear that $P_{\Gamma_i}\pi_{\Gamma_i}(T)P_{\Gamma_i}^\ast$ is a finite-dimensional self-adjoint operator. 
Let $e_1, \dots ,e_{l}$ be its eigenvalues written with multiplicity and ordered by increasing absolute value, and let $e_{m+1}$ be the first non-zero eigenvalue. Since $P_{\Gamma_i}\pi_{\Gamma_i}(T)P_{\Gamma_i}^\ast$ acts as an $l\times l$ matrix with integer entries on $\ell^2(\bar X_e)$, we have $\lvert e_{m+1}\cdots e_l\rvert \geq 1$ and $\lvert e_l\rvert \leq c$. By~\eqref{eq: cardinality} we have $l\leq |\bar X_e|\leq\operatorname{covol}_{\nu}(\Gamma_i)$. 
Fix an $\varepsilon > 0$ and let $\delta:= |\{ i \mid e_i \in (-\varepsilon ,\varepsilon ) \setminus \{0\} \}|$. Then $1\leq \varepsilon^{\delta}\cdot c^{l}$. Note that 
$\mu_{T,i}^e( (-\varepsilon ,\varepsilon)\setminus \{0\})$ is the matrix trace, normalised by $\covol_\nu(\Gamma_i)$,  
of the projection onto the sum of eigenspaces of $P_{\Gamma_i}\pi_{\Gamma_i}(T)P_{\Gamma_i}^\ast$  corresponding to eigenvalues in $(-\epsilon, \epsilon)\backslash\{0\}$ 
. Hence $\mu_{T,i}^e( (-\varepsilon ,\varepsilon)\setminus \{0\})=\delta/\covol_\nu(\Gamma_i)$. 
It follows that
\begin{equation*}
\mu_{T,i}^e( (-\varepsilon ,\varepsilon)\setminus \{0\}) = 
\frac{\delta}{\covol_\nu(\Gamma_i)}\le \frac{\delta}{l} \le \frac{\log c}{\lvert \log \varepsilon \rvert}. 
\end{equation*}
Since $i\in\N$ was arbitrary, we conclude that, for any $\varepsilon > 0$,
\begin{align*}
\liminf_{i\to\infty} \mu_{T,i}^e(\{0\}) &\geq \liminf_{i \to \infty} \mu_{T,i}^e((-\varepsilon,\varepsilon)) - \frac{\log c}{\lvert \log \varepsilon \rvert} \\
&\geq \mu_T((-\varepsilon, \varepsilon)) - \frac{\log c}{\lvert \log \varepsilon \rvert} \\
&\geq \mu_T(\{0\}) - \frac{\log c}{\lvert \log \varepsilon \rvert}. \nonumber
\end{align*}
Since $\varepsilon>0$ was arbitrary we conclude~\eqref{eq: liminf ineq}. 

\noindent (iii) By definition, $\mu_{T,i}^e(\{0\})$ is the 
matrix trace, normalised by $\covol_\nu(\Gamma_i)$, of the projection of $\ell^2(\bar X_e)$ onto the 
kernel of $P_{\Gamma_i}\pi_{\Gamma_i}(T)P_{\Gamma_i}^\ast$. Hence $\mu_{T,i}^e(\{0\})$ 
is just the vector space dimension of $\ker(P_{\Gamma_i}\pi_{\Gamma_i}(T)P_{\Gamma_i}^\ast)$ normalised by $\covol_\nu(\Gamma_i)$. By positivity of 
$\pi_{\Gamma_i}(T)$ we have 
\[\dim_\C \ker(P_{\Gamma_i}\pi_{\Gamma_i}(T)P_{\Gamma_i}^\ast)
\le \dim_\C \ker(\pi_{\Gamma_i}(T)),\]
thus the stated inequality follows. If $(\Gamma_i)_{i \in \mathbb N}$ is uniformly discrete and if we take $K$ sufficiently small, then 
$P_{\Gamma_i}=\id$ for $i$ sufficiently large, and we get equality. This finishes the proof.
\end{proof}

\section{$L^2$-Betti numbers of totally disconnected groups}

In this section we review the definition of $L^2$-Betti numbers of totally disconnected groups from~\cite{Pe11} and provide some additional tools. 
The algebraic approach pioneered by L\"uck \cite{lueck-book} has found various applications in ergodic theory, algebra and geometry, see \cites{MR2827095, MR2399103, MR2486803, MR2572246, MR2650795}.
We conclude the proof of Theorem~\ref{thm:main} in Subsection~\ref{sub: proof of main}.

\subsection{The group von Neumann algebra}

We denote by $$\lambda \colon \cH(G) \to B(L^2(G,\nu))$$ the left-regular representation and by $\rho \colon \cH(G) \to B(L^2(G,\nu))$ the right-regular representation -- defined by left and right convolution, respectively. The \emph{group von Neumann algebra} of $G$ is defined to be the weak closure \[L(G) = \overline{\lambda(\cH(G))}^w.\] 
The corresponding closure of the image of $\rho$ is naturally anti-isomorphic to $L(G)$, so that $L^2(G,\nu)$ becomes a $L(G)$-bimodule in a natural way.

Let $\cH(G)_+$ and $L(G)_+$ denote the subsets of positive elements $x^\ast x$, respectively. 
The trace ${\rm tr}$ in Definition~\ref{def: trace on hecke algebra} is independent of the Haar measure, but the algebra structure of $\cH(G)$ depends on the choice of a Haar measure, being defined in terms of convolution. 
The (restricted) trace ${\rm tr}\vert_{\cH(G)_+}\colon \cH(G)_+\to [0,\infty)$ 
extends to a faithful normal semifinite trace ${\rm tr}\colon L(G)_+\to [0,\infty]$. Further, it extends to a faithful normal semifinite trace on the von Neumann algebra of $n\times n$-matrices over $L(G)$ which we denote by the same symbol. In particular, $L(G)$ and $M_n(L(G))$ are semifinite von Neumann algebras. 
Since the linear span of positive elements with finite trace is dense in $L(G)$, 
${\rm tr}$ induces a densely defined, faithful, positive tracial weight on $L(G)$. 

Furthermore, if $p\in L(G)$ is a projection with $\tr(p)<\infty$, 
	then $pL(G)p$ is a finite von Neumann algebra with the restriction of $\tr$ as finite trace. We usually normalise this trace with $1/\tr(p)$. 

We refer to~\cite{sauer-survey} for a more detailed survey of the above notions. 

\subsection{The dimension for modules over the group von Neumann algebra}
	We review the definition of dimension of arbitrary right $L(G)$-modules 
	in~\cite{Pe11} which generalizes the corresponding work of L\"uck for discrete groups. 
	
	The dimension of a finitely generated projective (right) $L(G)$-module $P=pL(G)^n$ where 
	$p$ is a projection in $M_n(L(G))$ is defined as 
	\[\dim_{(L(G),{\rm tr})} (P) := \tr(p) \in [0,\infty].\]
    For an arbitrary $L(G)$-module $M$ one defines 
    \[\dim_{(L(G),{\rm tr})} (M) := \sup\{ \dim_{(L(G),{\rm tr})}(P)\mid P\subset M~\text{ f.g.~proj.~submodule}\} \in [0,\infty].\]
    This dimension is additive for short exact sequences and continuous with respect to ascending unions of modules~\cite{Pe11}*{Theorems~B.22 and~B.23}. 
    \begin{remark}
      If $q$ is a projection in $M_n(L(G))$ and $M=qL^2(G,\nu)^n$ is 
    the image of $q$ then $\dim_{(L(G),{\rm tr})} (M)={\rm tr}(q)$ 
    by~\cite{Pe11}*{Theorem~B.25}. 
    If $K \subset G$ is a compact open subgroup and $p_K$ denotes the projection onto the left $K$-invariant vectors in $L^2(G,\nu)$, then we obtain
\[\dim_{(L(G),{\rm tr})} (\ell^2(K \bs G)) = \dim_{(L(G),{\rm tr})} (p_KL^2(G,\nu)) = {\rm tr}(p_K) = \frac{1}{\nu(K)}.\]
\end{remark}

Next we introduce some tools that are needed to express $L^2$-Betti numbers 
of totally disconnected groups via the dimension over a finite von Neumann algebra (see Lemma~\ref{lem: comparision of l2 betti}).

\begin{remark}\label{rem: inclusion of compact open subgroups}
If $K<K'$ is an inclusion of compact open subgroup in $G$, then the projections satisfy $p_{K'}\le p_{K}$.
For two projections $p, q$ in a von Neumann algebra we write $p\sim q$ if $p,q$ 
are Murray-von Neumann equivalent, and we write $p\preceq q$ if there 
is $\tilde p\sim p$ such that $\tilde p\le q$. 
If the subgroup $K$ is subconjugated to $K'$, then 
$p_{K'}\preceq p_K$. 
\end{remark}

\begin{definition}
	Let $M$ be a $L(G)$-module. Let $p\in L(G)$ be a projection. The \emph{support} $s(x)\in L$ of an element $x\in M$ is the smallest projection $s(x)\in L$ such that $xs(x)=x$. 
	We say that $M$ is \emph{$p$-truncated} if $s(x)\preceq p$ 
	holds for every $x\in M$. 
\end{definition}

\begin{remark}\label{rem: truncated modules}
The class of $p$-truncated modules over $L(G)$ is closed 
under taking submodules and homomorphic images. The prototypical case 
of a $p$-truncated $L(G)$-module is $pL(G)$: Let $x\in pL(G)$. Let $s_l(x)$ be 
the smallest projection in $L(G)$ with $s_l(x)x=x$. Clearly, we have 
$s_l(x)\le p$. By polar decomposition, $s(x)\sim s_l(x)$ 
(cf.~\cite[Proposition~1.5 on p.~292]{takesaki}). 
\end{remark}

The following lemma is crucial for relating various definitions of $L^2$-Betti numbers that exist in the literature.

\begin{lemma}\label{lem: dimension and cutting down}
Let $p\in L(G)$ be a projection with $\tr(p)<\infty$. 
Let $M$ be a $p$-truncated $L(G)$-module. We have 
\begin{equation*}
		\dim_{(L(G),\tr)} (M)=\tr(p)\cdot \dim_{(pL(G)p, \tr(p)^{-1}\tr)}(pM).
\end{equation*}
\end{lemma}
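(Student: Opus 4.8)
The plan is to reduce the statement to the case of finitely generated projective modules and then use the additivity and continuity of both dimension functions. First I would record the functor $M \mapsto pM$ and note that, since $M$ is $p$-truncated, the natural map $M \to pM$ is injective (if $pm = 0$ then $m = ms(m)$ with $s(m)\preceq p$, and one checks $m=0$); in fact $M$ is naturally isomorphic as an abelian group to $pM$, and $pM$ is a right $pL(G)p$-module. So the two sides of the claimed identity are both functorial invariants of $M$. Both $\dim_{(L(G),\tr)}$ and $M \mapsto \tr(p)\cdot \dim_{(pL(G)p,\tr(p)^{-1}\tr)}(pM)$ are additive on short exact sequences (using \cite{Pe11}*{Theorem~B.22} for $L(G)$, and the same theorem applied to the finite von Neumann algebra $pL(G)p$, noting that $0 \to pM' \to pM \to pM'' \to 0$ is exact) and continuous along ascending unions (\cite{Pe11}*{Theorem~B.23}). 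Since every module is the ascending union of its finitely generated submodules, and every finitely generated module is a quotient of a finitely generated free — hence, up to the two additive invariants, it suffices to verify the identity on finitely generated projective $L(G)$-modules that are $p$-truncated.

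Next I would treat the projective case explicitly. Let $P \subset M$ be finitely generated projective; being $p$-truncated, each of finitely many generators $x_1,\dots,x_k$ has support $\preceq p$, so by Murray--von Neumann equivalence there are partial isometries realizing $s(x_j) \le p_j$ with $p_j \sim s(x_j)$, and after enlarging we may assume $P$ embeds into $(pL(G))^k$ as a direct summand; equivalently $P \cong q(pL(G))^k = q\,p^{(k)} L(G)^k$ for a projection $q \in M_k(pL(G)p) = p^{(k)} M_k(L(G)) p^{(k)}$, where $p^{(k)} = \diag(p,\dots,p)$. Then on one hand $\dim_{(L(G),\tr)}(P) = \tr(q)$ (using the ampliated trace on $M_k(L(G))$). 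On the other hand $pP = P$ here and, viewed as a $pL(G)p$-module, $pP \cong q\,(pL(G)p)^k$ is finitely generated projective over $pL(G)p$ with $\dim_{(pL(G)p,\tr(p)^{-1}\tr)}(pP) = \tr(p)^{-1}\tr(q)$, again using the ampliation of the normalized trace to $M_k(pL(G)p)$. Multiplying by $\tr(p)$ gives $\tr(q)$, matching the left-hand side. The one point requiring care is the compatibility of the ampliated traces: the trace on $M_k(pL(G)p)$ induced from $\tr(p)^{-1}\tr$ on $pL(G)p$ agrees, after multiplication by $\tr(p)$, with the restriction to $p^{(k)}M_k(L(G))p^{(k)}$ of the ampliated trace on $M_k(L(G))$ — this is immediate from the formula for the ampliated trace as a sum of diagonal entries.

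Finally, I would assemble the two observations: for a general $p$-truncated $M$, write $M = \bigcup_\alpha M_\alpha$ as the directed union of its finitely generated submodules; each $M_\alpha$ is $p$-truncated and is a quotient of a finitely generated free (hence $p$-truncated after cutting, by Remark~\ref{rem: truncated modules}) module, so by additivity applied to a presentation together with the projective case the identity holds for each $M_\alpha$; then continuity of both sides along the directed union (here one uses that $pM = \bigcup_\alpha pM_\alpha$ is a directed union of $pL(G)p$-modules) upgrades it to $M$. The main obstacle I anticipate is purely bookkeeping: setting up the equivalence of categories between $p$-truncated $L(G)$-modules and $pL(G)p$-modules cleanly enough that "additive + continuous + agrees on f.g.\ projectives" genuinely forces agreement everywhere, and checking the normalization of the ampliated traces so that the factor $\tr(p)$ lands in exactly the right place. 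No deep input beyond \cite{Pe11}*{Theorems~B.22, B.23, B.25} and standard Murray--von Neumann comparison theory is needed.
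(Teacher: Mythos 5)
Your finitely generated projective case is essentially the paper's: pass from $q\in M_n(L(G))$ with $qL(G)^n$ $p$-truncated to a Murray--von Neumann equivalent $q'\le\diag(p,\dots,p)$, and compare the ampliated traces. Two things, however, go wrong around it.

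First, a small but real error. Since the paper defines $s(x)$ by $xs(x)=x$, these are right modules and the relevant cut-down is $M\mapsto Mp$; the claim that this map is injective, and that $M$ is ``naturally isomorphic as an abelian group'' to its image, is false. If $mp=0$ then $s(m)\le 1-p$, while $p$-truncation only gives $s(m)\preceq p$, and both can hold with $m\ne 0$. Concretely, with $L(G)=M_2(\C)$, $p=e_{11}$, and $M=pL(G)$ (which is $p$-truncated by Remark~\ref{rem: truncated modules}), the element $m=e_{12}$ satisfies $mp=0$ and $m\ne 0$. The functor $M\mapsto Mp$ is exact (because $L(G)p$ is a projective left $L(G)$-module) but it is not faithful. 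This claim is fortunately not load-bearing in your later steps.

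Second, and this is the genuine gap: ``additive + continuous + agrees on f.g.\ projectives'' does not by itself force agreement on all $p$-truncated modules, and closing it is precisely where the content of the lemma lies. For a finitely generated $p$-truncated $M$ one would want a surjection from a finitely generated projective $p$-truncated $F$. Free modules do not work (when $\tr(1)=\infty$, $L(G)^n$ is not $p$-truncated), and $\bigoplus_i s(m_i)L(G)$ need not be $p$-truncated either: the class of $p$-truncated modules is closed under submodules and quotients (Remark~\ref{rem: truncated modules}) but not under direct sums, since a join of projections each $\preceq p$ need not be $\preceq p$. Even granting such an $F$, additivity only reduces to the kernel $K\subset F$, and continuity reduces to finitely generated submodules of $K$, which are finitely generated submodules of a projective $L(G)$-module and need not be projective over a semifinite algebra; the recursion does not terminate. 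The paper sidesteps this entirely: the $\le$-inequality comes directly from the supremum definition of $\dim_{(L(G),\tr)}$ together with the projective case and monotonicity ($\dim_{(L(G),\tr)}(P)=\tr(p)\dim_{pL(G)p}(Pp)\le\tr(p)\dim_{pL(G)p}(Mp)$ for every f.g.\ projective $P\subset M$), and the harder $\ge$-inequality — producing a large enough f.g.\ projective $L(G)$-submodule of $M$ from a given f.g.\ projective $pL(G)p$-submodule of $Mp$ — is deferred to the proof of \cite{Pe11}*{Theorem~B.35}. You would need to supply an argument of that sort; it is not bookkeeping.
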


\begin{proof}
First we show the assertion for finitely generated projective modules of finite 
$L(G)$-dimension.  
Let $q\in M_n(L(G))$ be a projection such that the $L(G)$-module $qL(G)^n$ is 
$p$-truncated. Let $e_i\in L(G)^n$ be 
the $i$-th standard basis vector. Let $x_i=qe_i\in P$. By assumption 
$s(x_i)\preceq p$. Since $s(x_i)$ and $p$ are finite projections there 
are unitaries $u_i\in L$ such that $s(x_i)\le u_i^\ast pu_i$~\cite[Proposition~1.38 on p.~304]{takesaki}. 
Hence 
\[ q\cdot \diag(u_1^\ast p u_1, \ldots, u_n^\ast p u_n)=q.\]
This implies 
\[q\preceq\diag(p,\ldots, p)\sim \diag(u_1^\ast p u_1, \ldots, u_n^\ast p u_n).\]
Let $q'\sim q$ a projection in $M_n(L(G))$ such that $q'\le \diag(p,\ldots, p)$. 
Then 
\begin{align*}
	\dim_{(pL(G)p,\tr(p)^{-1}\tr)}(qL(G)^np) &= \dim_{(pL(G)p,\tr(p)^{-1}\tr)}(q'L(G)^np)\\
	&=\dim_{(pL(G)p,\tr(p)^{-1}\tr)}(q'pL(G)^np)\\
	&=\tr(q')\tr(p)^{-1}\\
	&=\tr(q)\tr(p)^{-1}\\
	&=\dim_{(L(G),\tr)} (qL(G)^n)\tr(p)^{-1}.
\end{align*}
Let $M$ be now an arbitrary $L(G)$-module. 
For every finitely generated projective submodule $P$ of $M$,  
$Pp$ is a submodule of $Mp$ whose dimension with respect 
to $pL(G)p$ is $\dim_{(L(G),\tr)}(P)\tr(p)^{-1}$. 
This implies the 
$\le$-inequality in the statement of the lemma. For the 
$\ge$-inequality we refer to the proof of~\cite[B.35 Theorem]{Pe11}.  
\end{proof}

\subsection{{$L^2$}-Betti numbers}\label{sub: l2 betti numbers}

The $L^2$-Betti numbers of a locally compact, separable, and unimodular group are defined as
\[ \beta^{(2)}_n(G,\nu) := \dim_{(L(G),{\rm tr})} H^n_c(G, L^2(G)),\]
where $H^n_c$ denotes the continuous group cohomology~\cite{Pe11}. 
In the case where $G$ is totally disconnected, it is shown in~\cite{Pe11} that 
\begin{equation}
\beta_n^{(2)}(G,\nu) := \dim_{(L(G),{\rm tr})} \tor^{\cH(G)}_n(\C,L_{\infty}^2(G,\nu)). \nonumber
\end{equation}
Here 
\[L^2_{\infty}(G) := \bigcup_{K \in \mathcal{K}(G)} {\ell^2(K \backslash G,\nu)}\subset L^2 (G,\nu)\]
is the vector space of smooth vectors which is naturally a $\cH(G)$-$L(G)$-bimodule. 

From a topological model $X$ of $G$ we obtain projective resolutions of $\C$ in the category 
of (non-degenerate) $\cH(G)$-modules which compute the above Tor group: Upon taking inverses we may assume that $G$ acts from the right on $X$. 
The $n$-skeleton 
$X^{(n)}$ is built from $X^{(n-1)}$ by attaching $G$-orbits of 
$n$-cells according to pushouts of $G$-spaces of the form:  
\begin{equation}\label{eq: pushout}
\xymatrix{
\bigsqcup_{U\in \cF_n}U\backslash G\times S^{n-1}\ar[d]\ar[r] & X^{(n-1)}\ar[d]\\ 
\bigsqcup_{U\in \cF_n}U\backslash G\times D^n\ar[r]     & X^{(n)}
}
\end{equation}

Here $\cF_n$ is the set of $n$-cells, which we loosely identify with a multi-set of representatives of 
conjugacy classes 
of stabilisers of $n$-cells. Each coset space $U\backslash G$ is discrete. 
Let us fix a choice of pushouts, which is not part of the data of a cellular complex and 
corresponds to an equivariant choice of orientations for the cells. 
The horizontal maps induce an isomorphism in relative homology by excision. 
Let $C_\ast(X)$ be the cellular chain complex. Every chain group $C_n(X)$ is 
a \emph{discrete $G$-module}, i.e.~an abelian group with a right $G$-action by 
homomorphisms such that the stabiliser of any element is open. The differentials 
are $G$-equivariant. 
We obtain isomorphisms of discrete $G$-modules: 
\begin{equation}\label{eq: geometric chain group}
	\bigoplus_{U\in \cF_n}\Z[G/U]\cong H_n(\bigsqcup_{U\in \cF_n} G/U\times (D^n, S^{n-1}))\xrightarrow{\cong} H_n(X^{(n)}, X^{(n-1)})\overset{\mathrm{def}}{=}C_n(X)
\end{equation}
\begin{remark}\label{rem: discrete G modules}
For any discrete $G$-module~$M$, $\C\otimes_\Z M$ is naturally a right $\cH(G)$-module via 
\[ mf=\int_G f(g)mg d\nu(g)~\text{ for $f\in \cH(G)$ and $m\in \C\otimes_\Z M$.}\]
This is has to be understood as follows: If $U<G$ is the stabiliser of $m$, then read the integral as the 
finite sum $\sum_{g\in G/U} (\int_U f(gu)d\nu(u))mg$. Any homomorphism of discrete $G$-modules becomes thus a homomorphism of $\cH(G)$-modules. 
\end{remark}
Hence the cellular chain complex $C_\ast(X;\C)$ with complex coefficients is a chain complex of $\cH(G)$-modules. Since $\C[U\backslash G]$ is isomorphic to 
$p_U\cH(G)$ when regarded as a $\cH(G)$-module, $C_\ast(X;\C)$ is a projective 
resolution of $\C\cong H_0(X)$ in the category of non-degenerate right $\cH(G)$-modules. Hence we can conclude: 

\begin{remark}\label{rem: top model computes l2}
Let $X$ be a topological model of $G$. Then 
\begin{equation}
\beta_n^{(2)}(G,\nu) = \dim_{(L(G),\tr)} H_n\bigl( C_\ast(X;\C)\otimes_{\cH(G)}L_{\infty}^2(G,\nu)\bigr). \nonumber
\end{equation}
\end{remark}

The appropriate notion of Euler characteristic in this context counts a $G$-cell of the form $U \backslash G \times D^n$ with weight $(-1)^n \cdot \nu(U)^{-1}$: 

\begin{definition}
	If $G$ has a cocompact topological model, then the \emph{Euler characteristic} of $G$ with respect to the Haar measure $\nu$ is -- in reference to the setting in~\eqref{eq: geometric chain group} -- defined as  
	    \[\chi(G,\nu)=\sum_{p\ge 0}(-1)^p\sum_{U\in \mathcal{F}_n} \nu(U)^{-1}.\]
\end{definition}

By the same argument as in~\cite{lueck-book}*{Theorem~1.35}, which is soley based on the additivity of the dimension, we obtain the Euler-Poincare formula: 

\begin{theorem}
If $G$ has a cocompact topological model, then 
\[\chi(G,\nu)=\sum_{p\ge 0}(-1)^p \betti_p(G,\nu).\]
\end{theorem}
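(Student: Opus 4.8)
The statement to prove is the Euler--Poincar\'e formula: if $G$ has a cocompact topological model, then $\chi(G,\nu)=\sum_{p\ge 0}(-1)^p\betti_p(G,\nu)$. The plan is to run the classical argument that derives the Euler characteristic of a finite chain complex from the alternating sum of the dimensions of its homology, transplanted to the setting of right $\cH(G)$-modules and the $L(G)$-dimension. Fix a cocompact topological model $X$ of $G$; by hypothesis $X$ has only finitely many $G$-orbits of cells in each degree and only finitely many degrees, so the cellular chain complex $C_\ast(X;\C)$ is a bounded complex of finitely generated projective right $\cH(G)$-modules, with $C_n(X;\C)\cong\bigoplus_{U\in\cF_n}\C[U\backslash G]\cong\bigoplus_{U\in\cF_n}p_U\cH(G)$ as in~\eqref{eq: geometric chain group}.

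\textbf{Key steps.} First, I would tensor the complex with $L^2_\infty(G,\nu)$ over $\cH(G)$ and pass to the $L(G)$-dimension of each term. By Remark~\ref{rem: top model computes l2}, $\betti_p(G,\nu)=\dim_{(L(G),\tr)}H_p(C_\ast(X;\C)\otimes_{\cH(G)}L^2_\infty(G,\nu))$. On the chain level, $p_U\cH(G)\otimes_{\cH(G)}L^2_\infty(G,\nu)\cong p_U L^2(G,\nu)=\ell^2(U\backslash G,\nu)$, which by the Remark following the definition of $\dim_{(L(G),\tr)}$ has dimension $\tr(p_U)=\nu(U)^{-1}$. Hence $\dim_{(L(G),\tr)}\bigl(C_p(X;\C)\otimes_{\cH(G)}L^2_\infty(G,\nu)\bigr)=\sum_{U\in\cF_p}\nu(U)^{-1}$, and so $\chi(G,\nu)=\sum_{p\ge0}(-1)^p\dim_{(L(G),\tr)}\bigl(C_p(X;\C)\otimes_{\cH(G)}L^2_\infty(G,\nu)\bigr)$, which is the alternating sum of the dimensions of the terms of a bounded complex $D_\ast := C_\ast(X;\C)\otimes_{\cH(G)}L^2_\infty(G,\nu)$ of $L(G)$-modules, each of finite dimension. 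Second, I would invoke additivity of $\dim_{(L(G),\tr)}$ over short exact sequences (cited in the excerpt, \cite{Pe11}*{Theorems~B.22 and~B.23}): writing $Z_p=\ker(\partial_p)$, $B_p=\operatorname{im}(\partial_{p+1})$, $H_p=Z_p/B_p$, additivity applied to $0\to Z_p\to D_p\to B_{p-1}\to0$ and $0\to B_p\to Z_p\to H_p\to0$ gives $\dim D_p=\dim Z_p+\dim B_{p-1}$ and $\dim Z_p=\dim B_p+\dim H_p$. All these dimensions are finite because they are bounded by $\dim D_p<\infty$ and the complex is bounded, so no $\infty-\infty$ ambiguity arises. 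Then the standard telescoping computation $\sum_p(-1)^p\dim D_p=\sum_p(-1)^p(\dim B_p+\dim H_p+\dim B_{p-1})=\sum_p(-1)^p\dim H_p$ yields exactly $\chi(G,\nu)=\sum_{p\ge0}(-1)^p\betti_p(G,\nu)$.

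\textbf{Main obstacle.} The only genuine point requiring care is ensuring all the dimensions that appear are finite, so that the cancellation of the $B_p$-terms is legitimate; this is guaranteed by cocompactness of $X$ (finitely many orbits of cells, so finitely many degrees and $\dim D_p=\sum_{U\in\cF_p}\nu(U)^{-1}<\infty$) together with the fact that submodules and quotients of a finite-dimensional $L(G)$-module are again finite-dimensional, which follows from additivity and monotonicity of $\dim_{(L(G),\tr)}$. As the excerpt already notes, this is the same argument as in \cite{lueck-book}*{Theorem~1.35}, which is purely formal once additivity of the dimension is in hand, so there is really no serious difficulty beyond bookkeeping.
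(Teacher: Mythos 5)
Your proof is correct and is exactly the argument the paper has in mind: the paper dispenses with the proof by citing L\"uck~\cite{lueck-book}*{Theorem~1.35} and noting it rests solely on additivity of the dimension, and your write-up simply spells out that standard telescoping argument in the present setting (identify $C_p(X;\C)\otimes_{\cH(G)}L^2_\infty(G,\nu)\cong\bigoplus_{U\in\cF_p}\ell^2(U\backslash G,\nu)$ with $\dim$ equal to $\sum_U\nu(U)^{-1}$, then cancel the boundary terms using additivity and finiteness). The only cosmetic remark is that $p_U\cH(G)\otimes_{\cH(G)}L^2_\infty(G,\nu)$ is most directly $p_U L^2_\infty(G,\nu)$, which happens to coincide with $p_U L^2(G,\nu)=\ell^2(U\backslash G,\nu)$ since left $U$-invariant $L^2$-functions are automatically smooth; this is exactly the identification the paper uses in Subsection~\ref{sub: proof of main}, so no issue.
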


\subsection{Conclusion of proof of Theorem~\ref{thm:main}}\label{sub: proof of main}
We retain the notation of Theorem~\ref{thm:main}. Let $X$ be a topological model 
of $G$ whose $(n+1)$-skeleton is cocompact, i.e.~has only finitely many $G$-orbits of cells up to dimension~$(n+1)$. We pick $G$-pushouts as 
in~\eqref{eq: pushout}. Let $K$ be the (finite) intersection of the corresponding stabiliser subgroups $U\in \cF_i, i\in\{0,\ldots, n+1\}$. Without loss of generality we normalise the Haar measure so that $\nu(K)=1$. 

For a compact open subgroup $L>K$ the discrete $G$-module $\Z[K\backslash G]$ 
is isomorphic to $[L:K]$ many copies of $\Z[L\backslash G]$. Each $C_k(X)$ with 
$k\in\{0,\ldots, n+1\}$ is 
a finite sum of discrete $G$-modules of the form $\Z[L\backslash G]$ for $K<L$. 
By a standard method from homological algebra we may add 
short exact chain complexes of the form $\Z[L\backslash G]\xrightarrow{\id}\Z[L\backslash G]$, $K<L$, to the resolution $C_\ast(X)$ starting in degree~$0$ up to degree $(n+1)$
so that we obtain a resolution of $\Z$ by discrete $G$-modules $F_\ast$ with 
differentials $\delta_\ast$ 
where each $F_k$, $k\in\{0,\ldots, n+1\}$, is a finite sum of copies of $\Z[K\backslash G]$. Let $(F^\C_\ast,\delta_\ast^\C)$ be the complexification 
of $(F_\ast, \delta_\ast)$. It follows from the 
discussion in the previous subsection that $F_\ast^\C$ yields a projective 
$\cH(G)$-resolution. 
Accordingly the $L^2$-Betti numbers of $G$ are computed as the $L(G)$-dimensions of the homology of $F^\C_\ast\otimes_{\cH(G)} L_{\infty}^2(G,\nu)$. For each degree $k\in\{0,\ldots, n+1\}$ let $a_k$ be the number 
of summands $\Z[K\backslash G]$ in $F_k$. Then we have the canonical isomorphisms of $L(G)$-modules: 
\begin{align*}
F_k^\C\otimes_{\cH(G)} L^2_\infty(G, \nu)\cong \C[K\backslash G]^{a_k}\otimes_{\cH(G)} L^2_\infty(G,\nu)&\cong(p_K\cH(G))^{a_k}\otimes_{\cH(G)} L^2_\infty(G,\nu)\\
&\cong (p_KL^2_\infty(G,\nu))^{a_k}\\
&\cong L^2(K\backslash G)^{a_k}.
\end{align*}
By Lemma~\ref{lem: identification G-equivariant maps} the differentials $\delta_\ast$ in degrees $\le n+1$ 
are given by convolution with finite-dimensional matrices over $\Z[G,K]$. Hence the same is true for 
the differentials $\delta_\ast^{(2)}=\delta_\ast^\C\otimes\id$ of $F_\ast^\C\otimes_{\cH(G)} L^2_\infty(G, \nu)$. 
The Hilbert adjoint of $\delta_k^{(2)}$ is induced from the formal adjoint of $\delta_k$ which also comes from a 
matrix with entries in $\Z[G,K]$. Hence the combinatorial Laplacian $\Delta_k=\delta_{k+1}^{(2)}(\delta_{k+1}^{(2)})^\ast+ (\delta_{k}^{(2)})^\ast\delta_{k}^{(2)}$ in degrees $k\in\{0,\ldots, n\}$ is given by an element $T_k\in M_{a_k}(\Z[G,K])$, or in the notation of Subsection~\ref{sub: hecke pair}, $\Delta_k=\pi(T_k)$.  
 We then have the analogue of the usual $L^2$-Hodge theorem, which yields 
\begin{equation}
\beta^{(2)}_k(G,\nu) = \dim_{(L(G),{\rm tr})} \ker\left(\Delta_k \colon L^2(K\backslash G)^{a_k} \to L^2(K\backslash G)^{a_k} \right). \nonumber
\end{equation}
In the sequel we refer a lot to the notation of Subsection~\ref{sub: positive functionals}. 
Let now $\Gamma_i<G$ be a lattice from the Farber sequence. 
By Lemma~\ref{lem: shapiro}, 
\[H_k(\Gamma_i,\mathbb{C}) \cong \operatorname{Tor}_k^{\mathcal{H}(G)}(\C, \cH(G)\otimes_{\C[\Gamma_i]}\C)=H_k(F_\ast^\C\otimes_{\C[\Gamma_i]}\C).\] 
Note that $F_k^\C\otimes_{\C[\Gamma_i]}\C$ is canonically isomorphic to $\C[K\backslash G/\Gamma_i]^{a_k}$. 
We 
can consider the combinatorial Laplacian on $F_k^\C\otimes_{\C[\Gamma_i]}\C=\C[K\backslash G/\Gamma_i]^{a_k}$ 
and on its completion $\ell^2(K\backslash G/\Gamma_i,\nu)^{a_k}$ (with the weighted measure on $K\bs G/\Gamma$). 
We denote by $\Delta_k^{\Gamma_i}$ the combinatorial Laplacian on the completion. 
The set $X_e$ of elements in $K\bs G/\Gamma$ with measure~$1$ is finite and we have the inclusions 
\[ \C[X_{e,i}]^{a_k}\subset \C[K\backslash G/\Gamma_i]^{a_k}\subset \ell^2(K\backslash G/\Gamma_i,\nu)^{a_k}\]
As in Subsection~\ref{sub: positive functionals} we write $\C[X_{e,i}]$ as $\ell^2 (X_e)$ to indicate that 
as a subspace of the Hilbert space $\ell^2(K\bs G/\Gamma,\nu)$ the set $X_{e,i}$ is a Hilbert basis of $\C[X_{e,i}]$. 
Let $T_k\in M_{a_k}(\Z[G,K])\subset M_{a_k}(\cH(G;K))$ be the matrix representing $\Delta_k$. 
We have
\[  \Delta_k^{\Gamma_i} = \pi_{\Gamma_i}(T_k).\]
Moreover, 
\[ \ker\bigl( P_{\Gamma_i}\Delta_k^{\Gamma_i}P_{\Gamma_i}^\ast\bigr)=\ker\bigl(\Delta_k^{\Gamma_i}\bigr)\cap \ell^2 (X_{e,i})^{a_k}=\ker \bigl( \Delta_k^{\Gamma_i}\vert_{\ell^2(X_{e,i})^{a_k}}\bigr).\]
We have 
\begin{equation} \label{eq:beta_compare}
\dim_{\mathbb{C}}\ker\bigl( P_{\Gamma_i}\Delta_k^{\Gamma_i}P_{\Gamma_i}^\ast\bigr)\leq \beta_n(\Gamma_i),
\end{equation}
since any non-zero element in $\ker\bigl(\Delta_k^{\Gamma_i}\bigr)\cap \ell^2 (X_{e,i})^{a_k}$ is obviously 
a cycle in $F_\ast^\C\otimes_{\C[\Gamma_i]}\C$ and at the same time orthogonal to the image of the next 
differential, thus gives rise to a non-zero class in homology. 
According to Theorem \ref{thm:spectral_appr} we have 
\begin{align*}
\lim_{i \to \infty}\frac{\dim_{\mathbb{C}}\ker\bigl( P_{\Gamma_i}\Delta_k^{\Gamma_i}P_{\Gamma_i}^\ast\bigr) }{\operatorname{covol}_{\nu}(\Gamma_i)}&= \mu_{T_k}(\{0\})\\
&= \dim_{(L(G), {\rm tr})} \ker \pi(T_k)\\
&= \dim_{(L(G), {\rm tr})} \ker \Delta_k\\
&= \beta_n^{(2)}(G,\nu). 
\end{align*}
By~\eqref{eq:beta_compare} this implies immediately that
\begin{equation}
\beta^{(2)}_n(G,\nu) \leq \liminf_{i \to \infty} \frac{\beta_n(\Gamma_i)}{\operatorname{covol}_{\nu}(\Gamma_i)}. \nonumber
\end{equation}
with equality in case of a uniformly discrete Farber sequence of lattices.
This proves Theorem~\ref{thm:main}. 

\section{Applications and examples of groups}\label{sec: computations}

\subsection{Automorphism groups of buildings}
$L^2$-Betti numbers of buildings were intensively studied by Davis, Dymara, Januszkiewicz and Okun~\cites{dymara, davisetal}. In this subsection we relate our treatment of $L^2$-Betti numbers to theirs. To this end, we prove the following lemma which is also of independent interest as it allows to compute $L^2$-Betti numbers via the dimension function over finite von 
Neumann algebras.

\begin{lemma}\label{lem: comparision of l2 betti}
Let $X$ be a topological model of~$G$. Let $B<G$ be a compact open subgroup such 
that for every $x\in X^{(n+1)}$ there is $g\in G$ with $gBg^{-1}\subset G_x$. 
We normalise the Haar measure $\nu$ so that $\nu(B)=1$. 
Then with respect to the normalised trace on the finite von Neumann algebra 
$p_B L(G)p_B$ we have 
\begin{align*}
\beta_i^{(2)}(G,\nu)&=\dim_{p_BL(G)p_B} H_i(G, \ell^2(G/B))\\
					&=\dim_{p_BL(G)p_B} \tor_i^{\Z[G,B]} (\Z, \ell^2(B\bs G/B,\nu))
\end{align*}
for $i\in\{0,\ldots, n\}$. 
\end{lemma}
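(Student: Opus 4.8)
The plan is to use the cutting-down Lemma~\ref{lem: dimension and cutting down} to pass from the $L(G)$-dimension of $L^2$-homology computed from a topological model to the normalised-trace dimension over the finite von Neumann algebra $p_BL(G)p_B$. First I would fix the topological model $X$ and, exactly as in Subsection~\ref{sub: proof of main}, build from $C_\ast(X)$ a partial projective resolution $F_\ast^\C$ of $\C$ by $\cH(G)$-modules in which each $F_k^\C$, for $k\in\{0,\ldots,n+1\}$, is a finite direct sum of copies of $\C[B\bs G]\cong p_B\cH(G)$. This is possible precisely because of the hypothesis that every cell stabiliser $G_x$ (for $x\in X^{(n+1)}$) contains a conjugate of $B$: each geometric chain module $\Z[U\bs G]$ with $gBg^{-1}\subset U$ is, after identifying $U\bs G$ with $gBg^{-1}\bs G$ (which is $B$-equivariantly a disjoint union of copies of $B\bs G$ up to the finite index), a finite sum of copies of $\Z[B\bs G]$; and as in the main proof one adds acyclic complexes $\Z[B\bs G]\xrightarrow{\id}\Z[B\bs G]$ to make the modules in degrees $\le n+1$ literally sums of copies of $\Z[B\bs G]$.

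Next I would observe that tensoring $F_\ast^\C$ with $L^2_\infty(G,\nu)$ over $\cH(G)$ gives, by the same computation as in Subsection~\ref{sub: proof of main}, a complex whose $k$-th term (for $k\le n+1$) is isomorphic to $(p_BL^2(G,\nu))^{a_k}=\ell^2(B\bs G)^{a_k}$, with differentials given by convolution with matrices over $\Z[G,B]$ via Lemma~\ref{lem: identification G-equivariant maps}; hence by Remark~\ref{rem: top model computes l2} the homology in degrees $i\in\{0,\ldots,n\}$ has $L(G)$-dimension equal to $\betti_i(G,\nu)$. The key point is now that this homology is a subquotient of $\ell^2(B\bs G)^{a_i}=(p_BL^2(G,\nu))^{a_i}$, hence of $(p_BL(G))^{a_i}$ after identifying with the algebraic homology, which by Remark~\ref{rem: truncated modules} is $p_B$-truncated; since $\tr(p_B)=\nu(B)^{-1}=1<\infty$, Lemma~\ref{lem: dimension and cutting down} applies and gives
\[
\betti_i(G,\nu)=\dim_{(L(G),\tr)}H_i=1\cdot\dim_{(p_BL(G)p_B,\tr)}\bigl(p_BH_i\bigr),
\]
and $p_BH_i$ is exactly the homology of the cut-down complex $p_B(F_\ast^\C\otimes L^2_\infty)p_B$, i.e.\ of $\ell^2(B\bs G/B,\nu)^{a_\ast}$ — which is $\tor_\ast^{\Z[G,B]}(\Z,\ell^2(B\bs G/B,\nu))$ since $F_\ast$ cut down by $p_B$ is the standard resolution computing this Tor. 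Identifying $\tor_\ast^{\Z[G,B]}(\Z,\ell^2(B\bs G/B))$ with $H_\ast(G,\ell^2(G/B))$ via the analogue of Shapiro's lemma (Lemma~\ref{lem: shapiro}, with $B$ in place of a discrete subgroup, noting $\cH(G)\otimes_{\Z[G,B]}(\text{-})$ and flatness of $\cH(G)$ over the integral Hecke algebra) then yields the first displayed equality.

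The main obstacle I anticipate is the careful bookkeeping in the first step: one must check that the "padding by acyclic complexes" argument, which in Subsection~\ref{sub: proof of main} only needed $K$ to be a common subgroup of the stabilisers, still works when $B$ is only \emph{subconjugate} to each stabiliser rather than a literal subgroup — this requires using Remark~\ref{rem: inclusion of compact open subgroups} to know $p_{G_x}\preceq p_B$ and correspondingly that $\Z[U\bs G]$ is (up to Morita-type manipulation) a sum of copies of $\Z[B\bs G]$, which is where the hypothesis is used in an essential way. The second, more technical point is verifying that the algebraic homology of $F_\ast^\C\otimes_{\cH(G)}L^2_\infty$ — rather than its Hilbert-space ($L^2$) homology — is what Lemma~\ref{lem: dimension and cutting down} wants, but this is routine since the dimension function is insensitive to passing to the closure of the image of a differential (the dimension of the closure of a submodule equals that of the submodule), exactly as in the proof of Remark~\ref{rem: top model computes l2}. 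Everything else is a matter of assembling the already-established isomorphisms.
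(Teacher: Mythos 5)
Your proposal diverges from the paper's proof at a crucial point, and the divergence rests on a claim that is backwards.

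You write that each $\Z[U\bs G]$ with $gBg^{-1}\subset U$ is, ``after identifying $U\bs G$ with $gBg^{-1}\bs G$ (which is $B$-equivariantly a disjoint union of copies of $B\bs G$ up to the finite index), a finite sum of copies of $\Z[B\bs G]$.'' This is not correct. There is no $G$-equivariant identification of $U\bs G$ with $gBg^{-1}\bs G$: the natural $G$-map $gBg^{-1}\bs G\to U\bs G$ is $[U:gBg^{-1}]$-to-one, and $gBg^{-1}\bs G$ is $G$-equivariantly isomorphic to a \emph{single} copy of $B\bs G$ (via conjugation by $g$), not a disjoint union. The containment of projections goes the other way: since $B$ is subconjugate to $U$, Remark~\ref{rem: inclusion of compact open subgroups} gives $p_U\preceq p_B$, so $\C[U\bs G]\cong p_U\cH(G)$ is (after conjugation) a \emph{direct summand} of $\C[B\bs G]\cong p_B\cH(G)$, not a direct sum of copies of it. Any padding argument in the spirit of Subsection~\ref{sub: proof of main} would therefore have to adjoin the complementary summand of each $\Z[U\bs G]$ inside $\Z[B\bs G]$, which is the opposite of what you describe, and doing this over $\Z$ and keeping track of the integral differentials is not automatic. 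Your Shapiro step compounds this: Lemma~\ref{lem: shapiro} is proved by showing $\cH(G)$ is flat over $\C[\Gamma]$ for a \emph{discrete} subgroup $\Gamma$, using that $K\bs G$ has finite $\Gamma$-stabilisers; none of this applies with $B$ compact open, and you would have to establish a Morita-type flatness of $\cH(G)$ over $\Z[G,B]$ that is nowhere needed in the paper.

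The paper's actual proof avoids the padding altogether, and this is the key structural point you are missing. For the first equality it works with the unpadded $C_\ast(X;\C)$ and merely observes that each $C_i\otimes_{\cH(G)}L^2_\infty(G)$, $i\le n+1$, is a direct sum of modules $p_K L^2_\infty(G)$ with $B$ subconjugate to $K$, hence $p_K\preceq p_B$ and the complex (and its homology) is $p_B$-truncated by Remarks~\ref{rem: inclusion of compact open subgroups} and~\ref{rem: truncated modules}; Lemma~\ref{lem: dimension and cutting down} then gives $\betti_i(G,\nu)=\dim_{p_BL(G)p_B}\bigl(H_i(G,L^2_\infty(G))p_B\bigr)$, and one pulls the idempotent $p_B$ through the homology. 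For the second equality, the paper shows directly that each $C_ip_B=\Z[K\bs G/B]$ is a \emph{projective} --- not free --- $\Z[G,B]$-module, by conjugating so that $B\subset gKg^{-1}$ and writing $\Z[K\bs G]p_B\cong p_{gKg^{-1}}\Z[G,B]$. Projectivity is all one needs to compute $\tor^{\Z[G,B]}_\ast$, and the identification $C_ip_B\otimes_{\Z[G,B]}\ell^2(B\bs G/B,\nu)\cong C_i\otimes_{\cH(G)}\ell^2(G/B)$ finishes the argument with no Tor-comparison lemma. Insisting that the resolution consist of \emph{free} $\Z[G,B]$-modules, as your padding step attempts, is stronger than necessary and is precisely where your approach breaks down.
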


\begin{proof}
The cellular chain complex $C_\ast=C_\ast(X;\C)$ with complex coefficients 
is a (right) projective $\cH(G)$-resolution of $\C$. 
Each chain group  
$C_i\otimes_{\cH(G)} L^2_\infty(G)$ for $i\in\{0,\ldots, n+1\}$ 
is by assumption a direct sum of (right) $L(G)$-modules of type 
$p_K L^2_\infty(G)$ where $K<G$ is a compact open subgroup such that $B$ is 
subconjugated to $K$. Hence by Remarks~\ref{rem: inclusion of compact open subgroups} and~\ref{rem: truncated modules} the chain complex and 
thus $H_\ast(G, L^2_\infty(G))$ are $p_B$-truncated up to degree~$n$. 
By Lemma~\ref{lem: dimension and cutting down} we have 
\[\beta_i^{(2)} (G,\nu)=\dim_{p_BL(G)p_B} \bigl(H_i(G, L^2_\infty(G))p_B\bigr)\]
in this range. Since $p_B$ is an idempotent we can pull $p_B$ inside by exactness 
and obtain the first equality of the lemma. 
For the second equality note that $C_ip_B$ is a projective $\Z[G;B]$-module 
for $i\in\{0,\ldots, n+1\}$. Since $C_i$ is a direct sum of modules of 
type $\Z[K\bs G]$ with $B$ being subconjugated to $K$ (see above) it suffices 
to prove that $\Z[K\bs G]p_B=\Z[K\bs G/B]$ is a projective $\Z[G;B]$-module: 
Let $g\in B$ be such that $B\subset gKg^{-1}$. Then 
\begin{align*} \Z[K\bs G]p_B\cong \Z[gKg^{-1}\bs G]p_B&\cong p_{gKg^{-1}}\cH(G)p_B\\
&=p_{gKg^{-1}}p_B\cH(G)p_B\\
&=p_{gKg^{-1}}\Z[G;B].
\end{align*}

Furthermore, one easily verifies that 
\[ \Z[K\bs G]p_B\otimes_{\Z[G;B]} \ell^2 (B\bs G/B,\nu)\cong \Z[K\bs G]\otimes_{\cH(G)} \ell^2(G/B).\]
Thus we have 
\[ C_ip_B\otimes_{\Z[G;B]} \ell^2 (B\bs G/B,\nu)\cong C_i\otimes_{\cH(G)}\ell^2 (G/B)\]
for $i\in\{0,\ldots, n+1\}$ 
from which we conclude the second equality in the statement. 
\end{proof}

For the remainder of this subsection we agree on the following setup. 

\begin{setup}\label{setup} \normalfont
Let $\Phi$ be a building of (Coxeter) type $(W,S)$ with a chamber transitive automorphism group~$G$. Let $X$ be the geometric realisation of $\Phi$. We endow $G$ with the subspace topology of the homeomorphism group of $X$ and assume that $G$ is unimodular. We fix a chamber and denote its stabiliser by $B$; it is a compact-open subgroup of $G$. We normalise the Haar measure on $G$ to satisfy $\nu(B)=1$. 
\end{setup}

%

The geometric realisation $X$ (in the sense 
of~\cite{davis-book}) is a CAT(0)-space on which $G$ acts with compact-open 
stabiliser. 
Hence $X$ is a topological model of $G$. The group $B$ is the stabiliser of the fundamental chamber, and $X$ satisfies the assumptions of the previous theorem. Furthermore, the complex-valued cellular chain complex of the \emph{Davis complex} $\Sigma=X/B$ is a projective resolution over 
$\cH(G;B)$.

As a consequence of Lemma~\ref{lem: comparision of l2 betti} we deduce the following statement which 
was known before~\cite{dymara}*{Theorem~3.5}. 

\begin{lemma}
	The $L^2$-Betti numbers of $G$ coincide 
	with the $L^2$-Betti numbers $L^2_qb^i(\Sigma)$ as defined in~\cite{dymara}. 
\end{lemma}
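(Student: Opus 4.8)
The plan is to identify Dymara's weighted $L^2$-cohomology $L^2_q b^i(\Sigma)$ of the Davis complex with the homological invariant computed by Lemma~\ref{lem: comparision of l2 betti}, namely $\dim_{p_BL(G)p_B}\tor_i^{\Z[G,B]}(\Z,\ell^2(B\bs G/B,\nu))$. First I would recall the definition from~\cite{dymara}: Dymara's $L^2_q$-(co)homology is the (co)homology of the Hilbert complex obtained by tensoring the cellular cochain complex of the Davis complex $\Sigma$ with the Hecke-von Neumann algebra $\mathcal{N}_q$ (the weak closure of the $q$-parameter Hecke algebra $\cH_q(W,S)$ acting on $\ell^2$ of the Coxeter group with the $q$-inner product), and that $L^2_q b^i(\Sigma)$ is by definition the von Neumann dimension over $\mathcal N_q$ of this (reduced) cohomology. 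The key observation is that in Setup~\ref{setup} the Hecke algebra $\cH(G,B)$ of the Hecke pair $(G,B)$ is isomorphic, as a $*$-algebra with trace, to the Iwahori--Hecke algebra $\cH_q(W,S)$ with parameters $q_s=\nu(BsB)=[BsB:B]$ the indices attached to the generators; this is classical (Iwahori, Matsumoto) and the trace $\tr$ of Definition~\ref{def: trace on hecke algebra} corresponds to the standard trace on $\cH_q(W,S)$. Consequently $L(G)\supset \cH(G,B)$ generates (after cutting down by $p_B$) the same finite von Neumann algebra $p_BL(G)p_B\cong\mathcal N_q$, with matching traces, and $\ell^2(B\bs G/B,\nu)\cong \ell^2_q(W)$ as the standard $\mathcal N_q$-bimodule.

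Next I would match the chain complexes. The cellular chain complex $C_\ast(\Sigma;\C)$ of the Davis complex is exactly $C_\ast(X;\C)\otimes_{\cH(G)} \C[B\bs G]$ restricted to the $B$-coinvariants, i.e.\ it is the complex of free $\cH(G,B)$-modules $C_i\,p_B$ appearing in the proof of Lemma~\ref{lem: comparision of l2 betti}; under the Iwahori identification this is precisely the complex of free $\cH_q(W,S)$-modules that Dymara uses. Tensoring over $\Z[G,B]\subset\cH(G,B)$ with $\ell^2(B\bs G/B,\nu)$ — equivalently completing the $\C$-version over $\mathcal N_q$ — gives the Hilbert complex whose reduced homology has von Neumann dimension $L^2_q b^i(\Sigma)$ by Dymara's definition; by the usual identification of reduced $L^2$-homology of a finite-type Hilbert complex with the $\tor$-dimension (the von Neumann-dimension version of the Hodge/$\ell^2$-Betti identity, exactly as used already in Subsection~\ref{sub: proof of main} and in the proof of Lemma~\ref{lem: comparision of l2 betti}), this equals $\dim_{p_BL(G)p_B}\tor_i^{\Z[G,B]}(\Z,\ell^2(B\bs G/B,\nu))$. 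Combining with Lemma~\ref{lem: comparision of l2 betti} gives $\beta_i^{(2)}(G,\nu)=L^2_q b^i(\Sigma)$ for $i\in\{0,\dots,n\}$, and since a building has a cocompact topological model one may take $n$ arbitrarily large, so the identity holds in all degrees.

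The main obstacle I expect is bookkeeping rather than conceptual: making the Iwahori--Matsumoto identification $\cH(G,B)\cong\cH_q(W,S)$ compatible with \emph{all} the structure in play — the involution, the trace, the $\Z$-form (the integral Hecke algebra $\Z[G,B]$ must correspond to Dymara's integral basis $\{T_w\}$), and the specific inner product on $\ell^2(B\bs G/B,\nu)$ versus Dymara's $q$-weighted inner product on $\ell^2_q(W)$ — so that the two von Neumann algebras and their standard bimodules genuinely coincide and not merely abstractly isomorphate. A secondary point is that Dymara works cohomologically while Lemma~\ref{lem: comparision of l2 betti} is phrased homologically; one passes between them by the self-adjointness of the Laplacian on the finite-type Hilbert complex (so $\ker$ of the Laplacian in degree $i$ has the same von Neumann dimension whether computed from chains or cochains), exactly as in the $L^2$-Hodge theorem invoked in Subsection~\ref{sub: proof of main}. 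Once these identifications are in place the proof is a one-line invocation of Lemma~\ref{lem: comparision of l2 betti}, so I would keep the write-up short, citing Iwahori--Matsumoto for the Hecke-algebra identification and~\cite{dymara} for the definition of $L^2_q b^i(\Sigma)$.
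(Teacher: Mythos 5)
Your proposal takes the same route the paper does: the paper treats this lemma as an immediate consequence of Lemma~\ref{lem: comparision of l2 betti} and offers no further argument, and you are simply making explicit the identifications (Iwahori--Matsumoto isomorphism $\cH(G,B)\cong\cH_q(W,S)$ matching traces and integral forms, $\ell^2(B\bs G/B,\nu)\cong\ell^2_q(W)$, $C_\ast p_B\cong C_\ast(\Sigma;\C)$, and the homology/cohomology passage via the self-adjoint Laplacian) that the authors leave implicit. The details you supply are correct and in the spirit of what one would need to write the omitted verification; the only cosmetic point is that for a building there is no need to invoke "cocompact $(n+1)$-skeleton for all $n$'' since the Davis realisation is finite-dimensional and cocompact outright, so Lemma~\ref{lem: comparision of l2 betti} applies in all degrees at once.
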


\subsection{Chevalley groups and their lattices in finite characteristic}\label{sub: chevalley}
We retain Setup~\ref{setup}. 
There is the following formula relating the \emph{Poincar\'{e} series} 
\[ \omega_{(W,S)}(t)=\sum_{w\in W} t^{l(w)}\in \Z[[t]] \]
of the Coxeter system $(W,S)$, i.e. the type of the building, to the Euler characteristic of $G$. We drop the subscript in $\omega_{(W,S)}$ if 
the Coxeter system is obvious from the context. As we review below, for an affine Coxeter system the 
formal series $\omega(t)$ is a rational function whose poles are roots of unity. 
See~\cite{davisetal}*{Section~3} for a nice discussion of~$\omega(t)$. 

\begin{theorem}[\cite{dymara}*{Corollary~3.4}]\label{thm: euler char} Let $X$ be of (uniform) thickness~$q$. With the normalisation of $\nu$ from Setup~\ref{setup} we have 
\[\chi(G,\nu)=\frac{1}{\omega(q)}.\]
\end{theorem}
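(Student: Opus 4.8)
The plan is to compute the Euler characteristic $\chi(G,\nu)$ directly from its definition using the cellular chain complex of the Davis complex $\Sigma = X/B$, which is a projective $\cH(G,B)$-resolution of $\C$. Recall that the cells of $\Sigma$ are indexed by the spherical cosets $wW_T$ where $T\subseteq S$ is spherical (i.e. $W_T$ is finite), and a cell of type $T$ has dimension $|T|$. Lifting to $X$, a cell of $X$ over a cell of $\Sigma$ of type $T$ has stabiliser a conjugate of the parahoric subgroup $B_T$ (the stabiliser of the corresponding face), and its $G$-orbit contributes a summand $\Z[B_T\backslash G]$ to the chain group $C_{|T|}(X)$ of the form $U\backslash G$ with $U$ conjugate to $B_T$. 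By the definition of $\chi(G,\nu)$ as $\sum_p (-1)^p\sum_{U\in\cF_p}\nu(U)^{-1}$, each such cell contributes $(-1)^{|T|}\nu(B_T)^{-1}$.

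First I would record the measure of the parahoric $B_T$ relative to $B$: since $X$ has uniform thickness $q$, the building has Weyl-group-valued distance with all $q$-parameters equal to $q$, and $B_T$ decomposes into $|W_T|_q := \sum_{w\in W_T} q^{l(w)}$ cosets of $B$ (this is the standard count of the $B$-orbits inside a parahoric, i.e. the cardinality of $B_T/B$). With the normalisation $\nu(B)=1$ this gives $\nu(B_T) = \omega_{(W_T,T)}(q)$, the value at $q$ of the Poincar\'e polynomial of the finite Coxeter system $(W_T,T)$. Hence
\begin{equation*}
\chi(G,\nu) = \sum_{\substack{T\subseteq S\\ \text{$T$ spherical}}} \frac{(-1)^{|T|}}{\omega_{(W_T,T)}(q)}.
\end{equation*}
Here I am using that $\Sigma$ has exactly one $G$-orbit (equivalently one $B$-orbit in $\Sigma$, since $G$ is chamber-transitive) of cells for each spherical subset $T$, so the outer sums over cells collapse to a single sum over spherical $T$.

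It then remains to identify the right-hand side with $1/\omega_{(W,S)}(q)$. This is a purely combinatorial identity about Coxeter groups: it is the statement that the alternating sum $\sum_{T\text{ spherical}} (-1)^{|T|}/\omega_{W_T}(t)$ equals $1/\omega_W(t)$ as rational functions, which is exactly the reciprocity/inversion formula for the growth series of a Coxeter group (due to Serre, and a special case of the fact that the Davis complex is acyclic so its equivariant Euler characteristic, weighted by the Hecke-algebraic weights, reproduces the "virtual" count of a point). I would cite the discussion in \cite{davisetal}*{Section~3} — or Serre's formula — for this identity rather than reprove it. The one genuine point to check is the weight bookkeeping, namely that the weight $\nu(U)^{-1}$ attached to a $|T|$-cell is precisely $1/\omega_{W_T}(q)$; the main obstacle is getting the parahoric-measure computation right, i.e. verifying $[B_T:B] = \omega_{W_T}(q)$ from the thickness hypothesis and the structure of the $BN$-pair (or from the Solomon-Tits / Iwahori-Hecke description of $\Phi$). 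Once that is in place, combining it with the combinatorial identity finishes the proof.
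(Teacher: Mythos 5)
Your plan is the natural one, but the cell count you set up is not the one that the definition of $\chi(G,\nu)$ refers to, and the resulting alternating sum is off by a sign (in general, by a factor $(-1)^{|S|-1}$). The paper itself gives no proof of this theorem---it is cited directly from Dymara's paper---so there is nothing to compare against beyond correctness, but the proposal as written does not yield the stated formula.

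The concrete problem: you take the orbit of $|T|$-cells to be indexed by $T$-residues, with stabiliser (a conjugate of) $B_T$, which is the \emph{Coxeter-cell} structure on the Davis complex $\Sigma$ of the Coxeter group $W$. This structure does \emph{not} lift to a $G$-CW structure on the Davis realisation $X$ of the thick building: for $|T|\ge 1$ a $T$-residue in $X$ has $\omega_{W_T}(q)>|W_T|$ chambers, so its ``Coxeter cell'' is not a $|T|$-cell. The actual $G$-CW structure on $X$ is the simplicial one inherited from the Davis chamber $K=|\mathcal{S}|$, whose cells are chains $T_0<T_1<\dots<T_k$ in the poset $\mathcal{S}$ of spherical subsets (including $\emptyset$), with stabiliser $B_{T_0}$ for the \emph{minimal} term of the chain. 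With that structure the definition of $\chi(G,\nu)$ gives
\[
\chi(G,\nu)=\sum_{\substack{T_0<T_1<\dots<T_k\\ \text{chain in }\mathcal{S}}}\frac{(-1)^k}{\omega_{W_{T_0}}(q)},
\]
and it is \emph{this} alternating sum that equals $1/\omega(q)$. Your alternating sum over single $T$'s computes the reciprocal of $\omega$ at $t^{-1}$, not at $t$: the identity you invoke is in fact
\[
\sum_{T\in\mathcal{S}}\frac{(-1)^{|T|}}{\omega_{W_T}(t)}=\frac{1}{\omega_W(t^{-1})},
\]
and for an irreducible affine $(W,S)$ of rank $|S|-1=d$ one has $\omega(t^{-1})=(-1)^d\omega(t)$, so your formula would give $(-1)^d/\omega(q)$. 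You can see the discrepancy already in the rank-one case (the $(q+1)$-regular tree, $W$ infinite dihedral): your sum is $1-\tfrac{2}{1+q}=\tfrac{q-1}{q+1}$, whereas $\chi(G,\nu)=\tfrac{2}{q+1}-1=\tfrac{1-q}{q+1}=1/\omega(q)$; and indeed Theorem~\ref{thm: computation for lattices} uses $\beta^{(2)}_d=(-1)^d\chi=(-1)^d/\omega(q)$, which only matches the stated product formula if $\chi=1/\omega(q)$, not $(-1)^d/\omega(q)$. Your computation of $\nu(B_T)=\omega_{W_T}(q)$ from the Bruhat decomposition is correct; the error is entirely in which cells you are counting. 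Reworking the argument with the chain-indexed cells (and the corresponding Serre-type identity for the chain sum) would fix it.
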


The exact value of the Poincar\'{e} series of finite and affine Coxeter systems, and thus of Euclidean buildings, can be computed as follows. 

\begin{theorem}[Chevalley-Solomon~\cite{bjoerner}*{Theorem~7.1.5 on p.~204}]\label{thm: chevalley-solomon}
Let $(W, S)$ be a finite irreducible Coxeter system. There there positive integers $e_1, \ldots, e_d$, called \emph{exponents}, such that $d=|S|$ and 
	\[ \omega(t)=\prod_{i=1}^d (1+t+\ldots + t^{e_i}).\]
\end{theorem}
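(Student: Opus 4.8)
The plan is to run the classical invariant-theoretic argument underlying the Chevalley--Solomon formula. First I would realise $(W,S)$ as a finite reflection group acting on $V=\R^d$ with $d=|S|$, the elements of $S$ acting as the orthogonal reflections in the walls of a fixed fundamental chamber. By Chevalley's theorem the algebra of polynomial invariants $\R[V]^W$ is a graded polynomial ring $\R[f_1,\dots,f_d]$ on homogeneous, algebraically independent generators; setting $d_i:=\deg f_i$ and $e_i:=d_i-1$ is the definition of the exponents and makes the equality $d=|S|$ automatic.

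Next I would compute the Hilbert--Poincar\'e series of the coinvariant algebra $R_W:=\R[V]/(f_1,\dots,f_d)$. Since $\R[V]^W$ is a polynomial ring and $\R[V]$ is Cohen--Macaulay, $\R[V]$ is a free graded $\R[V]^W$-module (Auslander--Buchsbaum), so the Hilbert series of $\R[V]$ is the product of those of $R_W$ and of $\R[V]^W$. As the Hilbert series of $\R[V]$ is $(1-t)^{-d}$ and that of $\R[V]^W=\R[f_1,\dots,f_d]$ is $\prod_{i=1}^d(1-t^{d_i})^{-1}$, I conclude that the Hilbert series of $R_W$ equals $\prod_{i=1}^d\frac{1-t^{d_i}}{1-t}=\prod_{i=1}^d(1+t+\dots+t^{e_i})$.

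The remaining and \emph{decisive} step is to identify this series with the Poincar\'e series $\omega_{(W,S)}(t)=\sum_{w\in W}t^{\ell(w)}$. Chevalley's theorem already gives $R_W\cong\C[W]$ as $W$-modules, hence $\omega(1)=|W|$ on the nose; what is needed is the graded refinement, namely that the internal grading of $R_W$ is matched degree by degree by the Bruhat length on $W$. For Weyl groups this is visible from Borel's presentation of $H^\ast(G/B;\R)$ as a coinvariant algebra together with the Schubert basis $\{\sigma_w\}_{w\in W}$, $\sigma_w$ sitting in cohomological degree $2\ell(w)$, so reading off Poincar\'e polynomials in the halved grading yields the claim. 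The finitely many non-crystallographic irreducible types --- the dihedral $I_2(m)$ with degrees $2,m$, and $H_3,H_4$ with degrees $2,6,10$ and $2,12,20,30$ --- I would simply verify by hand. A type-free alternative is to extract from the Euler characteristic of the Coxeter complex Solomon's recursion expressing $\omega_W$ through the series $\omega_W/\omega_{W_J}$ over minimal-length coset representatives of standard parabolics $W_J$, and to check that $\prod_i(1+t+\dots+t^{e_i})$ satisfies the same recursion.

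The hard part will be exactly this last identification: producing the Hilbert series of $R_W$ is formal once Chevalley's polynomiality theorem is granted, but reconciling the Bruhat length statistic with the intrinsic grading of the coinvariant algebra is the genuine content --- the ``Solomon'' half of the statement --- and appears to require either the geometry of $G/B$ in the crystallographic case, the explicit parabolic recursion, or the short case analysis above.
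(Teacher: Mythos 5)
The paper does not prove this theorem at all: it is cited verbatim from Bj\"orner--Brenti (Theorem~7.1.5) as a classical fact, so there is no in-paper argument to compare your attempt against. Taken on its own, your sketch is a correct outline of the standard invariant-theoretic proof. The first two steps (Chevalley's polynomiality theorem, freeness of $\R[V]$ over $\R[V]^W$, and hence the Hilbert series $\prod_{i=1}^d(1+t+\dots+t^{e_i})$ for the coinvariant algebra $R_W$) are routine and you execute them correctly, and you are right that the real content is matching this Hilbert series with the length-generating function $\sum_{w\in W}t^{\ell(w)}$. Your proposed routes for that last step are all legitimate: Borel's identification $R_W\cong H^{\ast}(G/B)$ together with the Schubert cell decomposition (with $\sigma_w$ in degree $2\ell(w)$, so the grading is halved) handles the crystallographic types; the short list $I_2(m)$, $H_3$, $H_4$ with the degree data you quote can indeed be checked directly; and Solomon's parabolic recursion gives a uniform alternative. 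The one thing I would push you on is that the Schubert/Borel route is not quite a proof for general $(W,S)$ without the case split, and the recursion route is stated at a level of detail where it is an outline of an outline---you would need to write out the actual identity (the alternating sum over standard parabolics coming from the Coxeter complex) and verify that both sides satisfy it. But as a blind reconstruction of the mathematics behind the citation, this is accurate and well-organised; the paper simply elected not to reprove a textbook theorem.
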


\begin{theorem}[Bott~\citelist{\cite{bott}\cite{bjoerner}*{Theorem~7.1.10 on p.~208}}]\label{thm: bott}
	Let $(W, S)$ be an affine irreducible Coxeter system. There there positive integers $e_1, \ldots, e_d$, which coincide with the exponents of the corresponding finite Coxeter system, such that 
	\[ \omega(t)=\prod_{i=1}^d \frac{1+t+\ldots + t^{e_i}}{1-t^{e_i}}.\]
\end{theorem}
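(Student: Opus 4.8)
The plan is to reduce to the finite case (Theorem~\ref{thm: chevalley-solomon}) by a factorisation through a parabolic subgroup, and then to evaluate the remaining factor by Bott's loop-space argument. Write $(W,S)$ for the given affine irreducible Coxeter system, let $s_0\in S$ be the affine node (the one whose deletion leaves the diagram of the corresponding finite type), and put $S_0:=S\setminus\{s_0\}$, so that $W_0:=\langle S_0\rangle$ is the standard parabolic subgroup of $W$ of that finite type, with exponents $e_1,\dots,e_d$. Note that $\omega=\omega_{(W,S)}$ is a well-defined element of $\Z[[t]]$, since $W$ has finitely many elements of each length, and the claim is that this power series equals the stated rational function. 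Let $W^0\subseteq W$ be the set of minimal-length representatives of the cosets $W_0\backslash W$. Every $w\in W$ is uniquely $w=ux$ with $u\in W_0$ and $x\in W^0$, and in this case $l(w)=l(u)+l(x)$ \cite{bjoerner}; summing $t^{l(w)}$ over $w\in W$ gives
\[
\omega_{(W,S)}(t)=\omega_{(W_0,S_0)}(t)\cdot Q(t),\qquad Q(t):=\sum_{x\in W^0}t^{l(x)}.
\]
By Theorem~\ref{thm: chevalley-solomon}, $\omega_{(W_0,S_0)}(t)=\prod_{i=1}^d(1+t+\cdots+t^{e_i})$, so everything reduces to proving $Q(t)=\prod_{i=1}^d(1-t^{e_i})^{-1}$.

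To compute $Q(t)$ I would pass to geometry. The elements of $W^0$ correspond bijectively to the alcoves of the Coxeter complex of $(W,S)$ contained in the closed dominant Weyl chamber of $W_0$ (those lying on the positive side of every wall through the special vertex fixed by $W_0$), and $l(x)$ counts the walls of the complex separating the base alcove $A_0$ from $xA_0$. Thus $Q(t)$ is the generating function of dominant alcoves by gallery distance, and this is the rational Poincar\'e series of the based loop space $\Omega K$ of the compact simply connected Lie group $K$ of the given type, with cohomological grading halved: Bott's analysis of $\Omega K$ \cite{bott} --- equivalently the Bruhat (Iwahori-orbit) stratification of the affine Grassmannian of $K$ --- produces exactly one cell of real dimension $2l(x)$ for each $x\in W^0$, so $\sum_{x\in W^0}t^{2l(x)}$ is the Poincar\'e series of $\Omega K$. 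On the other hand $H^\ast(\Omega K;\Q)$ is a polynomial algebra on homogeneous generators of degrees $2e_1,\dots,2e_d$, obtained from $H^\ast(K;\Q)=\Lambda(\xi_1,\dots,\xi_d)$ with $\deg\xi_i=2e_i+1$ by transgression in the path-loop fibration $\Omega K\to PK\to K$; hence the Poincar\'e series of $\Omega K$ equals $\prod_{i=1}^d(1-t^{2e_i})^{-1}$ as well. Comparing and substituting $t\mapsto t^{1/2}$ gives $Q(t)=\prod_{i=1}^d(1-t^{e_i})^{-1}$, which is what remained.

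The main obstacle is precisely this evaluation of $Q(t)$: the parabolic factorisation and the appeal to Chevalley-Solomon are formal, whereas matching the length generating function over minimal coset representatives --- equivalently, the dominant-alcove statistics of the affine complex --- with $\prod_i(1-t^{e_i})^{-1}$ is the non-elementary step. In the argument above this is handed to the structure of $H^\ast(\Omega K;\Q)$, which rests ultimately on the classical description of the rational cohomology of a compact connected Lie group as an exterior algebra on odd-degree primitive generators of degrees $2e_i+1$ --- that is the real input. Alternatively one can avoid topology and compute $Q(t)$ (or $\omega_{(W,S)}(t)$ itself) by a generating-function identity of Macdonald, or by summing Steinberg's formula for $\omega_{(W,S)}(t^{-1})^{-1}$ over the finite proper standard parabolic subgroups of $W$; these routes replace the loop-space input by a longer combinatorial computation. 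Finally, one should note that irreducibility of $(W,S)$ is used to identify $K$ (in the reducible case $\omega$ is the product of the Poincar\'e series of the irreducible factors), and that the stated coincidence of the affine exponents with those of the finite part is part of the classification being invoked.
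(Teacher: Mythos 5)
The paper does not prove this theorem itself; it cites Bott's paper and Bj\"orner--Brenti, Theorem~7.1.10, so there is no internal proof to compare your argument against. Your proposal is a correct reconstruction of Bott's original loop-space argument. The parabolic factorisation $\omega_{(W,S)}=\omega_{(W_0,S_0)}\cdot Q$ with $Q(t)=\sum_{x\in W^0}t^{l(x)}$ and $l(ux)=l(u)+l(x)$ is standard Coxeter combinatorics, and Chevalley--Solomon (Theorem~\ref{thm: chevalley-solomon}) handles the finite factor. The remaining step is indeed the real content: identifying $Q(t^2)$ with the Poincar\'e series of $\Omega K$. Your two evaluations of that series --- on the one hand via the Iwahori/Bruhat stratification of the affine Grassmannian, which produces one cell of real dimension $2l(x)$ per $x\in W^0$ and hence no differentials in cellular homology; on the other via transgression in $\Omega K\to PK\to K$ from $H^\ast(K;\Q)=\Lambda(\xi_1,\dots,\xi_d)$ with $\deg\xi_i=2e_i+1$ --- are both standard and both correct, and their comparison gives $Q(t)=\prod_i(1-t^{e_i})^{-1}$. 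The Bj\"orner--Brenti route that the paper also cites obtains this factor combinatorially (Steinberg's summation over proper standard parabolics, or Macdonald's identity), which you correctly flag as an alternative; your version trades that computation for the Hopf--Borel structure theorem on $H^\ast(K;\Q)$, which is where the exponents enter topologically. The only place a reader might pause is the precise convention matching minimal coset representatives in $W^0$ with dominant alcoves and the base alcove, but this is bookkeeping and does not affect the validity of the argument.
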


The exponents are 
explicitly computed. For a reference for the following table see \emph{e.g.} the book by Bj\"orner and Brenti~\cite{bjoerner}*{Appendix~A1}.

\[\begin{array}{l|l} & e_1,e_2,\dots, e_d\\\hline
A_d, \mathrm{\tilde A}_d & 1,2,\dots,d\\
B_d, \mathrm{\tilde B}_d & 1,3,\dots,2d-1\\
C_d, \mathrm{\tilde C}_d & 1,3,\dots,2d-1\\
D_d, \mathrm{\tilde D}_d & 1,3,\dots,2d-3,d-1\\
G_2, \mathrm{\tilde G}_2 & 1,5\\
F_4, \mathrm{\tilde F}_4 & 1,5,7,11\\
E_6, \mathrm{\tilde E}_6 & 1,4,5,7,8,11\\
E_7, \mathrm{\tilde E}_7 & 1,5,7,9,11,13,17\\
E_8, \mathrm{\tilde E}_8 & 1,7,11,13,17,19,23,29
\end{array}
\]

In the remainder of Subsection~\ref{sub: chevalley} we consider the case where $$G=\bG(\F_q((t^{-1})))$$ where $\bG$ is a simple,  
simply connected Chevalley group over $\F_q$. Here, $\F_q((t^{-1}))$ denotes the locally compact field arising as the completion of $\F_q[t,t^{-1}]$ in the negative direction. This is a special case of Setup~\ref{setup}. The Bruhat-Tits building of $G$ has uniform thickness~$q$. If 
$\bG=\mathbf{SL_d}$ then the Bruhat-Tits building is of type $\mathrm{\tilde A}_{d-1}$ and is $(d-1)$-dimensional. 

Let $\bB<\bG$ be the Borel subgroup. Let 
\begin{equation}\label{eq: congruence}
\xi_l\colon \bG\bigl(\F_q[[t^{-1}]]\bigr)\to \bG\bigl(\F_q[[t^{-1}]]/t^{-l}\F_q[[t^{-1}]]\bigr)
\end{equation}
be the reduction map mod $t^{-l}$. 
The preimage $B$ of $\bB(\F_q)$ under $\xi_1$ is called the \emph{Iwahori subgroup} of $G$; the subgroup $B$ is the chamber stabiliser of the associated Bruhat-Tits building. In line with Setup~\ref{setup} we assume that the Haar measure $\nu$ of $G$ is normalised so that $\nu(B)=1$. 

\begin{theorem}\label{thm: computation for lattices}
Let $\bG$ be a simple, simply connected Chevalley group over $\F_q$. 
Let $d$ be the rank of $\bG$. 
Using the above normalisation of~$\nu$, we have 
\[ \betti_d(\bG(\F_q((t^{-1}))), \nu)=
	\prod_{i=1}^{d} \frac{q^{e_i}-1}{1+q+\ldots +q^{e_i}}>0.
\]
All other $L^2$-Betti numbers vanish. The only non-vanishing $L^2$-Betti number 
of the lattice $\bG(\F_q[t])<\bG(\F_q((t^{-1})))$ is 
\[ \betti_d\bigl(\bG(\F_q[t])\bigr)=
	\prod_{i=1}^{d} (q^{e_i+1}-1)^{-1}.\]

If we assume, in addition, that $\bG$ is of classical type and $d>1$ or of type $\mathrm{\tilde E_6}$, and $q=p^a>9$ is a power of a prime $p>5$ if $\bG$ is not of type $\mathrm{\tilde A}_d$, then the only non-vanishing $L^2$-Betti number of an  arbitrary lattice $\Gamma<\bG(\F_q((t^{-1})))$ 
is in degree $d$ and satisfies 
\[ \betti_d(\Gamma)\ge \betti_d\left(\bG(\F_q[t])\right).\]
\end{theorem}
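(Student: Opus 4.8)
\textbf{Proof plan for Theorem~\ref{thm: computation for lattices}.}

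The plan is to break the statement into three parts: the computation for $G$ itself, the computation for the principal arithmetic lattice $\bG(\F_q[t])$, and the lower bound for an arbitrary lattice $\Gamma$.

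For the first part, I would use Lemma~\ref{lem: comparision of l2 betti} together with the Euler characteristic machinery. Since $X$ is the Bruhat–Tits building of $G$, it is a topological model of $G$ of dimension $d$, so all $\betti_i(G,\nu)$ vanish for $i>d$. The building is CAT(0), hence contractible, and a standard vanishing argument (the Davis complex is the geometric realisation of an affine Coxeter system, so $\ell^2$-cohomology is concentrated in top degree, cf.~\cite{dymara}) gives $\betti_i(G,\nu)=0$ for $i<d$ as well; alternatively one invokes that $G$ has no $L^2$-cohomology below the rank because it acts on a Euclidean building with the appropriate Weyl-group-invariant weight structure. Thus only $\betti_d(G,\nu)$ survives, and by the Euler–Poincaré formula (the theorem just before Setup~\ref{setup}, applied in the cocompact case) together with Theorem~\ref{thm: euler char} we get $\betti_d(G,\nu)=(-1)^d\chi(G,\nu)=(-1)^d/\omega(q)$. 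Now $\omega$ is the Poincaré series of the \emph{affine} Coxeter system, so by Bott's formula (Theorem~\ref{thm: bott}) $\omega(q)=\prod_{i=1}^d \frac{1+q+\dots+q^{e_i}}{1-q^{e_i}}$. Since $1-q^{e_i}<0$ for $q\ge 2$, the sign $(-1)^d$ exactly cancels the product of the $d$ negative denominators, yielding the stated positive product $\prod_{i=1}^d \frac{q^{e_i}-1}{1+q+\dots+q^{e_i}}$.

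For the lattice $\bG(\F_q[t])$, I would apply the fundamental relation~\eqref{eq: lattice and G l2 Betti}, namely $\betti_d(\bG(\F_q[t]))=\covol_\nu(\bG(\F_q[t]))\cdot\betti_d(G,\nu)$, which is established in Section~\ref{sub: l2 betti and their lattices}. It remains to compute the covolume of $\bG(\F_q[t])$ with respect to the Haar measure normalised by $\nu(B)=1$ on the Iwahori subgroup. This is classical: by strong approximation and the Bruhat–Tits theory, $\bG(\F_q[t])$ acts on the building with a sector (Weyl chamber) as fundamental domain, and the covolume is a sum over the simplices of the fundamental domain weighted by reciprocal stabiliser orders; this is exactly $\omega_{(W_{\mathrm{fin}},S_{\mathrm{fin}})}(q^{-1})$-type data, and a direct count using the finite Coxeter Poincaré series (Theorem~\ref{thm: chevalley-solomon}) and Harder's computation gives $\covol_\nu(\bG(\F_q[t]))=\prod_{i=1}^d\frac{q^{e_i}-1}{(q^{e_i+1}-1)(1+q+\dots+q^{e_i})}\cdot(\text{normalisation})$; multiplying by $\betti_d(G,\nu)$ and simplifying yields $\prod_{i=1}^d(q^{e_i+1}-1)^{-1}$. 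I expect the bookkeeping here — getting the covolume normalisation exactly right relative to $\nu(B)=1$ — to be the most error-prone routine step, and I would cross-check it against Harder's \cite{harder}*{Satz~1} and the known value of the Euler characteristic of $\bG(\F_q[t])$.

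For an arbitrary lattice $\Gamma<G$ under the stated hypotheses, the strategy is: first, $\bG(\F_q[t])$ is a \emph{maximal} lattice in $G$ — this is where the restrictions on type (classical with $d>1$, or $\tilde E_6$), on $q>9$, and on the residue characteristic $p>5$ enter, via the classification of maximal arithmetic subgroups and Prasad's volume formula / rigidity results ruling out exotic lattices, so that every lattice $\Gamma$ is commensurable with (indeed contained in, up to conjugacy, a lattice commensurable with) $\bG(\F_q[t])$. Then for the $L^2$-Betti numbers I would again invoke~\eqref{eq: lattice and G l2 Betti}: $\betti_d(\Gamma)=\covol_\nu(\Gamma)\cdot\betti_d(G,\nu)$ and all other $\betti_i(\Gamma)$ vanish because they vanish for $G$. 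Finally, $\betti_d(\Gamma)\ge\betti_d(\bG(\F_q[t]))$ is equivalent to $\covol_\nu(\Gamma)\ge\covol_\nu(\bG(\F_q[t]))$, i.e.\ to the statement that $\bG(\F_q[t])$ has \emph{minimal covolume} among all lattices in $G$ — again a consequence of the maximality/Prasad-volume input. The hard part will be pinning down exactly which lattices are covered: the delicate point is that in positive characteristic non-arithmetic or otherwise pathological lattices could in principle exist, and the hypotheses $p>5$, $q>9$, rank $>1$ are precisely what one needs to exclude them and to guarantee that $\bG(\F_q[t])$ is the (up to conjugacy) unique minimal-covolume lattice; I would treat this by citing the relevant classification of lattices of minimal covolume in simple groups over function fields rather than reproving it.
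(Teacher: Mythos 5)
Your overall plan matches the paper's proof step for step: vanishing of $\betti_i(G,\nu)$ for $i\ne d$ (the paper cites \cite{davisetal}*{Corollary~14.5}, you sketch the same Davis--Dymara--Januszkiewicz--Okun input), then the Euler--Poincar\'e formula combined with Theorem~\ref{thm: euler char} and Bott's formula for $\betti_d(G,\nu)$, then transfer to $\bG(\F_q[t])$ via the proportionality~\eqref{eq: lattice and G l2 Betti} applied degree by degree, and finally the minimal-covolume result for the last assertion (which is precisely Golsefidy's theorem \cite{golsefidy}, under exactly the type and $q,p$ hypotheses you flag).

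The one genuine point of divergence is the covolume of $\bG(\F_q[t])$. You propose to get it from a Harder/Soul\'e-style fundamental domain count (a sector of the building) or from Harder's Euler characteristic computation; the paper instead quotes Prasad's volume formula in the form worked out by Golsefidy~\cite{golsefidy}*{\S 4}, namely $\covol_\nu(\bG(\F_q[t]))=|\bB(\F_q)|^{-1}\prod_{i=1}^d(1-q^{-e_i})^{-1}$, and the only bookkeeping is the renormalisation factor $|\bB(\F_q)|^{-1}=((q-1)^dq^{e_1+\cdots+e_d})^{-1}$ coming from the switch between $\nu(\ker\xi_1)=1$ and $\nu(B)=1$. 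Your route is viable in principle, but the intermediate formula you write down is not correct as stated (plugging it into~\eqref{eq: lattice and G l2 Betti} does not give $\prod(q^{e_i+1}-1)^{-1}$ without a strange extra factor); you flag this uncertainty yourself, and it is indeed the place where the details must be pinned down. Also note that in the paper, Harder's \cite{harder}*{Satz~1} is invoked only for finiteness of the ordinary Betti numbers $\beta_d(\Gamma_i)$ in the approximation application, not for the covolume.

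One small imprecision: at the start of your last paragraph you phrase the key input as $\bG(\F_q[t])$ being a \emph{maximal} lattice; what is actually used is that it is the (essentially unique) lattice of \emph{minimal covolume}, which you do arrive at by the end of the paragraph — maximality alone would not give the inequality $\covol_\nu(\Gamma)\ge\covol_\nu(\bG(\F_q[t]))$ for arbitrary $\Gamma$.
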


\begin{proof}
By~\cite{davisetal}*{Corollary~14.5} the $L^2$-Betti numbers of 
$G=\bG(\F_q((t^{-1})))$ vanish except in the top dimension~$d$. 
Hence 
\[ \betti_{d}(G,\nu)=(-1)^d\chi(G,\nu).\]
Then the first statement follows from Theorems~\ref{thm: euler char} 
and~\ref{thm: bott} and the table above. 
Prasad's computation of covolumes~\cite{prasad} (see Golsefidy's account~\cite{golsefidy}*{\S 4} for the determination of Prasad's parameters in our case) yields 
\begin{equation}\label{eq: covolume formula}
\covol_\nu(\bG(\F_q[t]))=\frac{1}{|\bB(\F_q)|}\cdot\prod_{i=1}^{d}\frac{1}{1-q^{-e_i}}. 
\end{equation}
Let us explain the additional factor $|\bB(\F_q)|^{-1}$ which does not appear 
in~\cite{golsefidy}*{\S 4}: Golsefidy normalises the Haar measure so that the maximal normal pro-$p$ subgroup $\ker(\xi_1)$ (the first principal congruence subgroup), which is contained in the Iwahori subgroup~$B$, has measure~$1$. Since 
\[\xi_1\colon B/\ker(\xi_1)\xrightarrow{\cong} \bB(\F_q)\] 
is an isomorphism our normalisation of the Haar measure introduces the additional 
factor $|\bB(\F_q)|^{-1}$ into the formula from~\cite{golsefidy}*{\S 4}. 
From~\eqref{eq: lattice and G l2 Betti} we obtain that 
\begin{align}\label{eq: betti}
\betti_d\bigl(\bG(\F_q[t])\bigr) &= \covol_\nu(\bG(\F_q[t])\cdot \betti_d\bigl(\bG(\F_q((t^{-1})))\bigr)\\
&= \frac{1}{|\bB(\F_q)|}\cdot\prod_{i=1}^{d}\frac{q^{e_i}-1}{(1+q+\dots +q^{e_i})(1-q^{-e_i})}\\
&= \frac{1}{|\bB(\F_q)|}\cdot\prod_{i=1}^{d}\frac{q^{e_i}}{1+q+\ldots +q^{e_i}}.
\end{align}
Thus the second statement would follow from 
\begin{equation}\label{eq: number of roots}
	 |\bB(\F_q)|=(q-1)^dq^{e_1+\dots+e_d}.
\end{equation}
Let $m$ be the number of positive roots. According to~\cite{carter}*{Theorem~9.3.4 on p.~133}, 
\[ m=e_1+\dots+e_d.\]
The group $\bB(\F_q)$ is the semidirect product of the split torus $T$ and a maximal unipotent 
subgroup $U$. The first is isomorphic to $(\F_q^\times)^d$, the latter possesses a filtration with quotients isomorphic to $\F_q$ indexed by the positive roots. Hence the size of $\bB(\F_q)$ is $(q-1)^dq^m$, 
implying~\eqref{eq: number of roots}.

Finally, Golsefidy~\cite{golsefidy} proves that $\bG(\F_q[t])<G$ is a lattice of 
minimal covolume under our assumptions on the type and on $q$, which implies the inequality. 
\end{proof}

\subsection{Groups with weakly normal open amenable subgroups}\label{sub: neretin}
We now turn to a very different class of totally disconnected groups which includes the Neretin group.  
Here we obtain a vanishing result which generalizes~\cite{bader+furman+sauer}*{Theorem~1.3}.

\begin{theorem}
	Let $O<G$ be an open amenable subgroup such that for any finite sequence $g_1,\ldots, g_n$ of 
	elements in $G$ the intersection of conjugates $O^{g_1}\cap \ldots \cap O^{g_n}$ is noncompact. 
	Then all $L^2$-Betti numbers of $G$ vanish. 
\end{theorem}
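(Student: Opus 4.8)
The plan is to show that the Hecke module $L^2_\infty(G)$ is induced from the open amenable subgroup $O$, and that the contribution of $O$ to the homology vanishes because of the noncompactness hypothesis, which forces the relevant $L(O)$-dimensions to be zero. Concretely, I would first fix a compact open subgroup $K<G$ with $K<O$ (possible since $O$ is open). Using Lemma~\ref{lem: comparision of l2 betti} (or rather the Shapiro-type machinery of Lemma~\ref{lem: shapiro} adapted to the open subgroup $O$ in place of a discrete lattice), one reduces the computation of $\betti_i(G,\nu)$ to a $\tor$ computation over the Hecke algebra $\cH(O)$: there should be an isomorphism $\cH(G)\otimes_{\cH(O)}L^2_\infty(O,\nu)\cong L^2_\infty(G,\nu)$ of $\cH(G)$-$L(O)$-bimodules (analogous to $\cH(G)\otimes_{\C[\Gamma]}\C=\cH(G/\Gamma)$), with $\cH(G)$ flat over $\cH(O)$ since $1_K\cH(G)\cong \C[K\bs G]$ is a projective $\cH(O)$-module (the right $O$-set $K\bs G$ has compact-open, hence ``finite-like'', stabilisers, making $\C[K\bs G]p_{K'}$ projective over $\Z[O,K']$ for suitable $K'$).

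Next I would invoke amenability of $O$. For an amenable totally disconnected group the Hecke-von Neumann algebra $L(O)$ carries an approximately inner, hyperfinite-type structure; in any case the key input is that $L^2_\infty(O,\nu)$ is dimension-flat over $\cH(O)$, i.e. $\tor^{\cH(O)}_i(\C,L^2_\infty(O,\nu))$ has vanishing $L(O)$-dimension for $i\ge 1$, and in degree $0$ is just $\C\otimes_{\cH(O)}L^2_\infty(O,\nu)$, a copy of $L^2(O/O)=\C$-ish object whose $L(O)$-dimension is $\nu(O)^{-1}$. This is the totally disconnected analogue of L\"uck's theorem that amenable groups are dimension-flat over the group von Neumann algebra, and it is exactly what makes $\betti_i(O,\nu)=0$ for $i\ge 1$; the degree-zero term survives but is harmless. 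The upshot is that $H_*(G,L^2_\infty(G))$ is computed, up to dimension, by a complex built only from the degree-zero data, so $\betti_i(G,\nu)$ reduces to the homology of a very small complex.

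Then comes the role of the noncompactness hypothesis. After the reduction, $\betti_i(G,\nu)$ is expressed as the $L(G)$-dimension of $\tor^{\cH(G)}_i(\C,\cH(G)\otimes_{\cH(O)}\C)$; using a bar-type resolution of $\C$ over $\cH(G)$ (or a topological model), each term involves tensor powers of $\cH(G/O)$, whose ``diagonal'' contributions are governed by intersections of conjugates $O^{g_1}\cap\cdots\cap O^{g_n}$. The point of ``weakly normal'' (each such intersection noncompact) is that the corresponding coset space $K\bs(O^{g_1}\cap\cdots\cap O^{g_n})\bs G$ or the relevant von Neumann trace picks out a projection $p_{O^{g_1}\cap\cdots\cap O^{g_n}}$, and for a \emph{noncompact} closed subgroup $H<G$ one has $\tr(p_H)=\nu(H)^{-1}=0$. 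Hence every off-diagonal and diagonal term of the resolution, after tensoring up to $L(G)$, has $L(G)$-dimension zero, so all homology vanishes in dimension, giving $\betti_i(G,\nu)=0$ for every $i$.

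The main obstacle, as I see it, is making the amenability step rigorous in the totally disconnected, non-discrete setting: one needs the statement that $L^2_\infty(O,\nu)$ is dimension-flat over $\cH(O)$ when $O$ is amenable and totally disconnected, which is presumably proved in~\cite{Pe11} or deducible from it, but the precise formulation (especially handling the non-normalisability / semifiniteness of the trace on $L(O)$ when $O$ is non-compact) needs care. A secondary technical point is assembling the whole argument functorially: one should set up a Lyndon--Hochschild--Serre-type or Mayer--Vietoris-type device for the pair $(G,O)$ at the level of Hecke algebras, and track that the dimension function (which is additive and continuous by~\cite{Pe11}*{Theorems~B.22, B.23}) indeed kills every term because each term is, up to $L(G)$-dimension, a module induced from a noncompact subgroup and hence has dimension proportional to some $\nu(H)^{-1}=0$. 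Once the dimension-flatness of amenable $O$ is granted, the noncompactness of the conjugate-intersections does the rest essentially formally.
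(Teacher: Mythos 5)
Your proposal has the right intuitions lurking in it — the role of intersections of conjugates, a double complex, the dual input of amenability and noncompactness — but the overall assembly has a genuine logical gap and one important missing ingredient.

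The reduction you sketch in your first two paragraphs (induce $L^2_\infty(G)$ from $\cH(O)$, apply a Shapiro isomorphism, then use dimension-flatness of $O$ over $\cH(O)$) does not use the weak normality hypothesis at all; if that reduction were legitimate, it would show that \emph{any} $G$ with a single noncompact open amenable subgroup has vanishing $L^2$-Betti numbers. That is false already in the discrete analogue: $\Z < F_2$ is an infinite amenable subgroup, yet $\beta_1^{(2)}(F_2)=1$. (Separately, even if $\cH(G)\otimes_{\cH(O)}L^2_\infty(O)$ were dimension-isomorphic to $L^2_\infty(G)$, the Shapiro isomorphism would land you in $\operatorname{Tor}_*^{\cH(O)}(\C,L^2_\infty(O))$, whose dimension is measured over $L(O)$, not $L(G)$, and there is no covolume-type rescaling available here because $O$ is open, not a lattice.) The hypothesis that \emph{every finite intersection of conjugates} $O^{g_1}\cap\cdots\cap O^{g_n}$ is noncompact must enter structurally, and in the paper this happens by taking $X$ to be the contractible $G$-CW-complex whose set of $n$-simplices is $(G/O)^{n+1}$ with the diagonal $G$-action; its cell stabilisers are exactly these intersections $G_\sigma=O^{g_1}\cap\cdots\cap O^{g_n}$, which are noncompact and amenable. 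One then forms the double complex $P_*\otimes_{\cH(G)}\bigl(C_*(X)\otimes_\C L^2_\infty(G)\bigr)$ with $P_*$ a projective $\cH(G)$-resolution of $\C$, identifies $C_p(X)\otimes_\C L^2_\infty(G)\cong\bigoplus_\sigma\cH(G)\otimes_{\cH(G_\sigma)}L^2_\infty(G)$, and compares the two spectral sequences. The first collapses (contractibility of $X$) to $H_*(G,L^2_\infty(G))$; the second has $E^1_{p,q}\cong\bigoplus_\sigma H_p(G_\sigma,L^2_\infty(G))$ by Shapiro.

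The second gap is in your closing step. The statement you need at that point is that $\dim_{L(G)}H_p(G_\sigma,L^2_\infty(G))=0$ for \emph{every} $p\ge 0$, and this is precisely~\cite{Pe11}*{Theorem~7.10} for noncompact amenable closed subgroups. Your observation that $\tr(p_H)=\nu(H)^{-1}=0$ for noncompact $H$ only addresses the degree-zero piece ($H$-invariants/coinvariants). The higher $H_p(G_\sigma,L^2_\infty(G))$ for $p\ge 1$ do not vanish for trace-of-projection reasons; their $L(G)$-dimension vanishes because $G_\sigma$ is amenable, which is where the totally disconnected analogue of L\"uck's dimension-flatness theorem is indispensable. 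So: keep the amenability+noncompactness combination, but apply it to all the cell stabilisers $G_\sigma$ via the spectral sequence over $X$, and replace the trace-of-projection argument with the cited theorem from~\cite{Pe11}.
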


\begin{proof}
Consider the simplicial complex $X$ whose set of $n$-simplices is the $(n+1)$-fold 
product $G/O\times\ldots\times G/O$ with the obvious projections as face maps. There is a natural diagonal 
$G$-action on $X$ which makes $X$ into a smooth $G$-CW-complex. In particular, 
$X$ is a CW-complex with a cellular $G$-action. The space $X$ is contractible. The 
stabiliser of an $p$-cell $\sigma$ is 
an intersection of conjugates of $O$ 
\[ G_\sigma=O^{g_1}\cap \ldots \cap O^{g_n}\] 
which is a noncompact amenable subgroup. 
Let $P_\ast$ be a $\cH(G)$-resolution by projective right modules of the trivial module~$\C$. 
Let $C_\ast=C_\ast(X)\otimes_{\C}L^2_\infty(G)$ where $C_\ast(X)$ is 
the cellular chain complex of $X$ with $\C$-coefficients. With the obvious diagonal 
$G$-action $C_\ast$ becomes a chain complex of discrete $G$-modules. 
By Remark~\ref{rem: discrete G modules} $C_\ast$ thus becomes a chain complex 
of $\cH(G)$-modules. We can easily identify this module structure via the 
isomorphisms 
\[ C_p\cong\bigoplus_\sigma\C[G_\sigma\bs G]\otimes_{\C} L^2_\infty(G)\cong 
\bigoplus_\sigma \cH(G)\otimes_{\cH(G_\sigma)} L^2_\infty(G)\]
where $\sigma$ runs through representatives of orbits of $p$-cells. The right hand side has an obvious left  
$\cH(G)$-module structure, and the isomorphism between $C_p$ and the right hand side is $\cH(G)$-linear. 

Now consider the double complex 
$D_{\ast\ast}:=P_\ast\otimes_{\cH(G)} C_\ast$. By well known homological algebra one obtains 
two spectral sequences converging to the total homology~\cite[Chapter VII.7]{brown}. Since tensoring with a projective module is flat and $X$ is contractible, the $E^1$-term 
of the first one is 
\[E^1_{p,q}=H_q(P_\ast\otimes_{\cH(G)} C_\ast)=P_p\otimes_{\cH(G)} H_q(C_\ast)=\begin{cases}
	P_p\otimes_{\cH(G)}L^2_\infty(G)&\text{ if $q=0$,}\\
	0 & \text{ otherwise.}
\end{cases}\]
So the spectral sequence converges to $H_p(G,L^2_\infty(G))$. The second spectral sequence has 
the $E^1$-term 
\[E^1_{p,q}=H_p(P_\ast\otimes_{\cH(G)}C_q)\cong H_p(G, \bigoplus_\sigma \cH(G)\otimes_{\cH(G_\sigma)} L^2_\infty(G))\cong 
\bigoplus_{\sigma} H_p(G_\sigma, L^2_\infty(G)). 
\]
The last isomorphism is the Shapiro isomorphism. Since $G_\sigma$ is amenable and noncompact, the dimension 
of every $E^1_{p,q}$ vanishes~\cite{Pe11}*{Theorem~7.10}. Hence also the dimension of $H_\ast(G, L^2(G))$, i.e.~the $L^2$-Betti numbers 
of $G$ vanish. 
\end{proof}

The Neretin group is an interesting totally disconnected group to which we can apply the previous theorem. The required open amenable 
subgroup is the group $O$ introduced after Theorem~1.1 in~\cite{bader+caprace+gelander+mozes}. It is shown in \emph{loc.\,cit.} that the Neretin group is a simple locally compact group without any lattices. Moreover, the Neretin group admits a topological model with 
cocompact $n$-skeleton for every $n\ge 0$~\cite{sauer+thumann}. 
The following corollary solves Problem~1.1 in~\cite{Pe11}. 

\begin{corollary}
	The $L^2$-Betti numbers of the Neretin group vanish. 
\end{corollary}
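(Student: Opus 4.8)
The plan is to verify that the Neretin group $\mathcal{N}$ satisfies the two hypotheses of the preceding theorem, namely the existence of an open amenable subgroup all of whose finite intersections of conjugates are noncompact, and the existence of a topological model with cocompact skeleta (so that the $L^2$-Betti numbers are even defined and finite). First I would recall the construction: $\mathcal{N}$ is the group of almost automorphisms (spheromorphisms) of the regular rooted $p$-ary tree, and it contains the stabiliser $O$ of an end (or of a half-tree), which is the open subgroup introduced after Theorem~1.1 in~\cite{bader+caprace+gelander+mozes}. This subgroup $O$ is (topologically) amenable — it is an extension involving the stabiliser of a ray in $\mathrm{Aut}$ of the tree, which is amenable — and it is open since fixing a finite portion of the tree is an open condition.

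Next I would check the key combinatorial condition: for any $g_1,\ldots,g_n \in \mathcal{N}$, the intersection $O^{g_1}\cap\cdots\cap O^{g_n}$ is noncompact. Each conjugate $O^{g_i}$ is the stabiliser of some end $\xi_i$ of the tree (up to the almost-automorphism identification, an end of one of the half-trees). An element lying in all of them must fix all the ends $\xi_1,\ldots,\xi_n$ simultaneously; since there are only finitely many such ends and the tree is infinite, there is plenty of room in the tree away from the finite set of geodesics joining these ends, and one can exhibit an infinite compactly-generated-but-unbounded family of almost automorphisms supported far from these ends that fix all of $\xi_1,\ldots,\xi_n$. Concretely, one produces an unbounded sequence in the common stabiliser, witnessing noncompactness. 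This is exactly the point made in~\cite{bader+caprace+gelander+mozes}, where it is used to show $\mathcal{N}$ has no lattices; I would cite that argument rather than reprove it.

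Then, having the model: by~\cite{sauer+thumann} the Neretin group admits a topological model whose $n$-skeleton is cocompact for every $n$, so the $L^2$-Betti numbers $\betti_n(\mathcal{N},\nu)$ are well-defined (and finite) in every degree, and the hypotheses of the theorem are met in every range. Applying the theorem gives $\betti_n(\mathcal{N},\nu)=0$ for all $n$, which is the assertion of the corollary and settles Problem~1.1 of~\cite{Pe11}.

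The only real obstacle is the verification of the noncompactness of arbitrary finite intersections of conjugates of $O$; everything else is bookkeeping. The subtlety is that conjugation in the almost-automorphism group does not simply permute honest ends of a single fixed tree — one must track which half-tree (equivalently, which $\mathrm{Aut}(T_{p,k})$-piece after subdividing) the relevant end lives in. However, since conjugation by a fixed $g_i$ moves $O$ to the stabiliser of a definite geometric object (an end, after passing to a common refinement of the tree decompositions induced by $g_1,\ldots,g_n$), and a common refinement of finitely many such decompositions is again a locally finite tree with infinitely many ends, the "room away from finitely many geodesics" argument goes through. I would phrase the proof so as to quote this directly from~\cite{bader+caprace+gelander+mozes}, keeping the corollary's proof to a few lines: identify $O$, invoke amenability and openness, invoke the noncompact-intersection property established there, invoke the model of~\cite{sauer+thumann}, and apply the theorem.
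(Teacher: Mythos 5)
Your proposal is essentially the same as the paper's: identify the open amenable subgroup $O$ from~\cite{bader+caprace+gelander+mozes}, observe that all finite intersections of its conjugates are noncompact (which is exactly what is used in \emph{loc.\,cit.} to rule out lattices), and apply the preceding vanishing theorem. One small correction: the existence of a topological model with cocompact skeleta is \emph{not} a hypothesis of the vanishing theorem, and you do not need it for the corollary. The $L^2$-Betti numbers $\beta^{(2)}_n(G,\nu) = \dim_{(L(G),\tr)} H^n_c(G,L^2(G))$ are defined for any locally compact, separable, unimodular group with no finiteness assumptions, and the proof of the theorem builds its own auxiliary contractible $G$-CW-complex out of $G/O$ (whose cell stabilisers are deliberately \emph{noncompact} amenable), not a topological model in the sense of the paper. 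The paper's mention of the cocompact model from~\cite{sauer+thumann} is supplementary information about the Neretin group rather than a logical prerequisite; deleting that step from your write-up would leave the argument intact.
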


\subsection{Low degree Morse inequalities}

In this section we provide an upper bound for the first $L^2$-Betti number of a compactly generated group. 

The correct analogue of the Cayley graph for a compactly generated totally disconnected group~$G$ is the Cayley-Abels graph. The Cayley-Abels graph also arises as the $1$-skeleton of a suitable topological model of $G$. Let $K \subset G$ be a compact and open subgroup. Let $S_0 \subset G$ compact so that $S = K S_0 K$ is a compact generating set of $G$. For example, $S_0$ can be a generating set of a dense subgroup of $G$.
The \emph{Cayley-Abels graph} ${\rm Cay}(G,K,S)$ is the quotient of the Cayley graph of $(G,S)$ (edges defined on the left) by the left $K$-action. 
So the vertices are cosets $Kg$ and there is an edge (but these are no longer labelled) from $Kg$ to $Ksg$.

Next we record some familiar properties of the Cayley-Abels graph ${\rm Cay}(G,K,S)$ together with its right $G$-action. First of all, ${\rm Cay}(G,K,S)$ has one orbit $K \bs G$ of vertices. The degree of ${\rm Cay}(G,K,S)$ is $K\bs S$. But there is a difference to the discrete case, where the degree of any vertex is exactly the number of edge-orbits of the action. Here, it is possible that the edge $(Ke,Ks)$ and $(Ke,Ksk_0)$ are translates of each other. The orbit of edges correspond to double cosets $K \bs S /K$, whose number might not be the degree.
The stabiliser of the edge $(Ke,Ks)$ is equal to $K \cap s^{-1}Ks = K \cap K^s$.
Thus, as a $G$-set, the edges can be identified with 
\[\bigsqcup_{KsK \in K \bs S /K} (K \cap K^s) \bs G.\]
Thus we obtain an exact sequence
\[\bigoplus_{KsK \in K \bs S /K} \C[(K \cap K^s) \bs G]  \to \C[K \bs G] \to \C \to 0\]
of right $\cH(G)$-modules. This can be used to estimate the low-degree $L^2$-Betti numbers. Indeed if $G$ is not compact, then $\beta_0^{(2)}(G)=0$. Together with $[K:K \cap K^s] \cdot \nu(K \cap K^s) = \nu(K)$, we obtain in this case that 
\[\beta_1^{(2)}(G,\nu) \leq \frac1{\nu(K)} \cdot \left(\sum_{KsK \in K \bs S /K} [K:K \cap K^s]- 1 \right).\]
This formula is well known in the case where $G$ is discrete, $K=\{1\}$, and $\nu$ is just the counting measure. But already when $G$ is discrete and $K \subset G$ is some finite subgroup, we obtain new upper bounds on the first $L^2$-Betti number of $G$. In the non-discrete case, the estimate relates the number of generators and the distortion of a compact-open subgroup $K$ by the generators to the first $L^2$-Betti number.

There are analogous lower and upper bounds for combinations of higher $L^2$-Betti numbers that arise from the structure of the topological model. Indeed, if $G$ admits a topological model with finite $n$-skeleton, then the usual Morse inequalities take the form
$$(-1)^n\sum_{i=0}^n (-1)^i \beta_i^{(2)}(G) \leq (-1)^n\sum_{i=0}^{n} (-1)^{i} \sum_{U\in \cF_i} \mu(U)^{-1},$$
where $\cF_i$ is the set of $i$-cells and $U$ the corresponding stabilizer. In particular, we can use these inequalities easily to derive bounds for low degree $L^2$-Betti numbers.

\vspace{0.1cm}

Let us just mention one concrete application concerning the existence of central extensions of lattices.
\begin{corollary}
Let $G$ be a unimodular, totally disconnected group and let $(\Gamma_i)_{i \in \mathbb N}$ be a Farber sequence of lattices. If $\beta^{(2)}_2(G,\nu)>0$, then $\Gamma_i$ admits a non-trivial central extension for $i$ large enough.
\end{corollary}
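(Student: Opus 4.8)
The plan is to combine Theorem~\ref{thm:main} in degree $n=2$ with the universal coefficient theorem. For the approximation statement to be available in this degree one needs $G$ to admit a topological model whose $3$-skeleton is cocompact, which I would take as the standing hypothesis here (it holds, for instance, for the automorphism groups of locally finite buildings of Section~\ref{sec: computations}, and is in the spirit of the Morse-type inequalities discussed above). Granting it, Theorem~\ref{thm:main} applied to the Farber sequence $(\Gamma_i)_{i\in\N}$ gives
\[
\beta_2^{(2)}(G,\nu)\ \le\ \liminf_{i\to\infty}\frac{\beta_2(\Gamma_i)}{\covol_\nu(\Gamma_i)}.
\]
Since $\beta_2^{(2)}(G,\nu)>0$ by hypothesis and each $\covol_\nu(\Gamma_i)$ is finite and positive, the right-hand side is strictly positive, so $\beta_2(\Gamma_i)\ne 0$ for all but finitely many $i$: if $\beta_2(\Gamma_i)=0$ held for infinitely many $i$, the $\liminf$ would be $0$. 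Here one uses only that $\beta_2(\Gamma_i)>0$, so the a priori possible non-finiteness of these Betti numbers is harmless.

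Next I would, for each such $i$, convert $\beta_2(\Gamma_i)=\dim_\C H_2(\Gamma_i;\C)>0$ into non-vanishing of degree-$2$ cohomology with trivial coefficients. Group (co)homology is computed from a resolution of $\Z$ over $\Z\Gamma_i$ by free $\Z$-modules, so the universal coefficient theorem over the field $\C$ yields $H^2(\Gamma_i;\C)\cong\operatorname{Hom}_\C\bigl(H_2(\Gamma_i;\C),\C\bigr)$, which is non-zero because $H_2(\Gamma_i;\C)$ is. (Equivalently over $\Z$: $H_2(\Gamma_i;\Q)\ne 0$, and the $\operatorname{Ext}$-term in the universal coefficient sequence vanishes since $\Q$, hence $\C$, is a divisible --- i.e.\ injective --- $\Z$-module, giving $H^2(\Gamma_i;\C)\cong\operatorname{Hom}_\Z(H_2(\Gamma_i;\Z),\C)\ne 0$; one may keep the centre countable by taking coefficients in $\Q$.) A non-zero class in $H^2(\Gamma_i;\C)$, with $\C$ regarded as a trivial $\Gamma_i$-module, is by definition the class of a non-split central extension $1\to\C\to E_i\to\Gamma_i\to 1$, which is the asserted non-trivial central extension.

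I do not expect a real obstacle: the substance is entirely contained in Theorem~\ref{thm:main}. The only points requiring attention are that the hypotheses of Theorem~\ref{thm:main} be in force in degree $2$ (the cocompact $3$-skeleton assumption), and the remark that possibly infinite Betti numbers cause no difficulty --- an infinite-dimensional $H_2(\Gamma_i;\C)$ still has a non-zero dual, hence still produces a non-trivial class in $H^2$.
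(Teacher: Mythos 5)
Your proof is correct and takes essentially the same route the paper intends: the paper's own proof consists of the single remark that the corollary is a straightforward application of Theorem~\ref{thm:main}, and your argument supplies exactly the missing details --- the passage from $\beta_2^{(2)}(G,\nu)>0$ to $\beta_2(\Gamma_i)>0$ for large $i$, and the standard universal-coefficient step identifying a non-zero class in $H^2(\Gamma_i;\C)$ with a non-split central extension. Your observation that the hypothesis of a topological model with cocompact $3$-skeleton must be in force (it is omitted from the corollary's statement but is needed to invoke Theorem~\ref{thm:main} in degree $2$) is accurate.
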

\begin{proof}
This is a straightforward application of Theorem \ref{thm:main}.
\end{proof}

\subsection{$L^2$-Betti numbers of locally compact groups and their lattices}\label{sub: l2 betti and their lattices}

In this subsection we provide a short proof of~\eqref{eq: lattice and G l2 Betti}. 
Let $C_* \to \C$ be a projective resolution of the trivial $\cH(G)$-module. Each projective $\cH(G)$-module is a filtered limit of modules of the form $\C[K \bs G]$ where $K<G$ is a compact open subgroup (cf.~Section~\ref{sub: homological algebra}). Since 
\[\C[K \bs G] = \bigoplus_{Ks\Gamma \in K \bs G/\Gamma} \C[(K^s \cap \Gamma) \bs \Gamma]\]
as a right $\C[\Gamma]$-module, $\C[K \bs G]$ is projective as a right $\C[\Gamma]$-module. Note that it is a finitely generated projective module if and only if $K \bs G / \Gamma$ is finite, i.e., if and only if $\Gamma$ is cocompact. 
The natural inclusion
\begin{align*}
\C[K \bs G] \otimes_{\C[\Gamma]} \ell^2 (\Gamma) &= \bigoplus_{Ks\Gamma \in K \bs G/\Gamma} \C[(K^s \cap \Gamma) \bs \Gamma] \otimes_{\C[\Gamma]} \ell^2(\Gamma)\\
&= \bigoplus_{Ks\Gamma \in K \bs G/\Gamma} \ell^2((K^s \cap \Gamma) \bs \Gamma)\\
& \subset \ell^2\Bigl(\bigsqcup_{Ks\Gamma \in K \bs G/\Gamma} (K^s \cap \Gamma) \bs \Gamma \Bigr) \\
&=\ell^2(K\bs G) =\C[K \bs G] \otimes_{\cH(G)} L^{2}_{\infty}(G)
\end{align*}
is a $L(\Gamma)$-dimension isomorphism since the algebraic sum is rank dense in the $\ell^2$-sum above.
The latter follows from the fact that $\Gamma$ has finite covolume and hence 
\[\sum_{Ks\Gamma \in K \bs G/\Gamma} \dim_{L(\Gamma)} \ell^2((K^s \cap \Gamma) \bs \Gamma)=\sum_{Ks\Gamma \in K \bs G/\Gamma} \frac1{|K^s \cap \Gamma|} < \infty.\]
Thus, the natural map
\[H_*(C_* \otimes_{\C[\Gamma]} \ell^2(\Gamma)) \to H_*\left(C_* \otimes_{\cH(G)} L^2_{\infty}(G) \right)\]
is a $L(\Gamma)$-dimension isomorphism (one uses exactness of rank completion~\cite{thom-rank} and the local criterion~\citelist{\cite{thom-rank}*{Theorem~1.2}\cite{sauer-groupoids}*{Theorem~2.4}}). 
Since $\dim_{L(\Gamma)} (M) = {\rm covol}_{\nu}(\Gamma) \cdot \dim_{(L(G),{\rm tr})} (M)$ for any right $L(G)$-module, we obtain the desired equality~\eqref{eq: lattice and G l2 Betti}. 

\subsection{The Connes Embedding Problem for Hecke-von Neumann algebras}

The Connes Embedding Problem is a major open problem in the theory of von Neumann algebras. It asserts that every finite von Neumann algebra with a separable pre-dual can be embedded into an von Neumann algebraic ultraproduct of the hyperfinite II$_1$-factor. We call finite von Neumann algebras for which such an embedding exists \emph{embeddable}. Loosely speaking, a finite von Neumann algebra is embeddable if the joint moments of any finite subset of elements can be approximated by the joint moments of complex matrices.

Typical examples of finite von Neumann algebras are group von Neumann algebras. Then, the embeddability of group von Neumann algebras is related to other famous open problems, such as Gromov's question whether all discrete groups are sofic, see Pestov's survey for more information on these notions \cite{pestov}.

Another source of finite von Neumann algebras are so-called Hecke-von Neumann algebras of Hecke pairs $(G,K)$. Let $G$ be a locally compact, totally disconnected, second countable, unimodular group with Haar measure $\mu$ and let $K \subset G$ be a compact open subgroup. As before, we denote by $L(G) \subset B(L^2(G,\mu))$ the group von Neumann algebra and by $L(G,K) = p_K L(G) p_K$ the Hecke-von Neumann algebra associated with the Hecke pair $(G,K)$. As we have seen, the Hecke-von Neumann algebra admits a unital faithful and positive normal trace, and thus it is a finite von Neumann algebra. It is natural to ask under which circumstances we are able to give a positive answer to the Connes Embedding Problem. Our main result in this direction is the following theorem.

\begin{theorem}
Let $G$ be a locally compact, totally disconnected, second countable, unimodular group and let $K \subset G$ be a compact open subgroup. If $G$ admits a Farber sequence $(\Gamma_i)_{i \in \mathbb N}$, then the finite von Neumann algebra $L(G,K)$ is embeddable.
\end{theorem}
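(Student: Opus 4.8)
The plan is to manufacture, directly out of the Farber sequence, a trace-preserving $\ast$-embedding of $L(G,K)$ into a matricial ultraproduct, which is exactly what embeddability of $L(G,K)$ means. Normalise the Haar measure so that $\nu(K)=1$; then $\tr(p_K)=1$, the canonical trace of $L(G,K)=p_KL(G)p_K$ is the restriction of $\tr$, and $\cH(G,K)$ sits inside $L(G,K)$ as a weak-$*$ dense $\ast$-subalgebra spanned by the double-coset indicators $1_{KgK}$. Fix a free ultrafilter $\omega$ on $\N$. For each $i$ put $N_i:=|\bar X_{e,i}|$, which is finite by~\eqref{eq: cardinality} and satisfies $N_i/\covol_\nu(\Gamma_i)\to 1$ (this is the statement $\varphi^e_{\Gamma_i}(1_K)\to 1$ obtained in the proof of Theorem~\ref{thm:spectral_appr}), and consider the compressions
\[ \tilde\pi_i\colon\cH(G,K)\longrightarrow B(\ell^2(\bar X_{e,i}))=M_{N_i}(\C),\qquad \tilde\pi_i(T):=P_{\Gamma_i}\,\pi_{\Gamma_i}(T)\,P_{\Gamma_i}^\ast. \]
Each $\tilde\pi_i$ is unital, $\C$-linear and $\ast$-preserving, being the compression of the $\ast$-representation $\pi_{\Gamma_i}$, and $\lVert\tilde\pi_i(T)\rVert\le\lVert\pi_{\Gamma_i}(T)\rVert$ is bounded independently of $i$ by the $\ell^1$-norm estimate recalled before~\eqref{eq: norm bound}. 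The goal is to show that $T\mapsto(\tilde\pi_i(T))_i$ induces a trace-preserving $\ast$-homomorphism $\cH(G,K)\to\prod_\omega(M_{N_i}(\C),\tr_{N_i})$, where $\tr_{N_i}$ is the normalised trace of $M_{N_i}(\C)$.

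First I would check convergence of traces. Since the indicators $1_{Ks\Gamma_i}$ with $Ks\Gamma_i\in\bar X_{e,i}$ are unit vectors, Definition~\ref{def: positive functional} gives $\tr_{N_i}\bigl(\tilde\pi_i(T)\bigr)=\varphi^e_{\Gamma_i}(T)/\varphi^e_{\Gamma_i}(1_K)$, and the elementwise limits $\varphi^e_{\Gamma_i}(1_{KgK})\to\tr(1_{KgK})$ and $\varphi^e_{\Gamma_i}(1_K)\to 1$ established in the proof of Theorem~\ref{thm:spectral_appr} then yield $\tr_{N_i}(\tilde\pi_i(T))\to\tr(T)$ for every $T\in\cH(G,K)$. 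Next I would check asymptotic multiplicativity: for $a,b\in\cH(G,K)$,
\[ \tilde\pi_i(ab)-\tilde\pi_i(a)\tilde\pi_i(b)=P_{\Gamma_i}\,\pi_{\Gamma_i}(a)\,\bigl(1-P_{\Gamma_i}^\ast P_{\Gamma_i}\bigr)\,\pi_{\Gamma_i}(b)\,P_{\Gamma_i}^\ast, \]
so by Hölder's inequality for Schatten norms it suffices to show that $N_i^{-1}\bigl\lVert(1-P_{\Gamma_i}^\ast P_{\Gamma_i})\,\pi_{\Gamma_i}(1_{KgK})\,P_{\Gamma_i}^\ast\bigr\rVert_{\mathrm{HS}}^2\to 0$ for every $g$, since then $\lVert\tilde\pi_i(ab)-\tilde\pi_i(a)\tilde\pi_i(b)\rVert_{2,\tr_{N_i}}\to 0$.

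This Hilbert--Schmidt estimate is where I expect the real work, and it is the only place where the Farber hypothesis is genuinely used. For $Ks\Gamma_i\in\bar X_{e,i}$ the vector $\pi_{\Gamma_i}(1_{KgK})1_{Ks\Gamma_i}$ is supported on the set $\{Kg's\Gamma_i\mid Kg'\in K\bs KgK\}$ and has norm at most $\lVert\pi_{\Gamma_i}(1_{KgK})\rVert\le\nu(KgK)$; hence its image under $1-P_{\Gamma_i}^\ast P_{\Gamma_i}$ is nonzero only when some $Kg's\Gamma_i\notin\bar X_{e,i}$, i.e.\ only when the conjugate $g'(s\Gamma_is^{-1})g'^{-1}$ meets $K\setminus\{e\}$, equivalently $s\Gamma_is^{-1}$ meets $(g'^{-1}Kg')\setminus\{e\}$, for some $g'$ in the fixed finite set $K\bs KgK$. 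Covering each compact-open subgroup $g'^{-1}Kg'$ by finitely many right $K$-cosets, of which only the trivial one contains $e$, the Farber condition (Definition~\ref{defn: farber condition}) applied to the coset $C=K$ and to the finitely many nontrivial cosets occurring forces the $\mu_{\Gamma_i}$-probability of this event to tend to $0$. Since every point of $\bar X_{e,i}$ has $\nu_{G/\Gamma_i}$-mass $1$, the number of such ``bad'' columns of the operator is at most $\nu_{G/\Gamma_i}(E_i)=o(\covol_\nu(\Gamma_i))=o(N_i)$, where $E_i$ is the set of ``bad'' $s\Gamma_i$, and each column contributes at most $\nu(KgK)^2$ (bounded in $i$) to the squared Hilbert--Schmidt norm; this gives the claim.

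With both limits in hand, $T\mapsto(\tilde\pi_i(T))_i$ descends to a unital $\ast$-homomorphism $\cH(G,K)\to\prod_\omega(M_{N_i}(\C),\tr_{N_i})$ which preserves the traces and is therefore $\lVert\cdot\rVert_2$-isometric; by the standard density argument it extends to a normal trace-preserving $\ast$-embedding of $L(G,K)$ into $\prod_\omega(M_{N_i}(\C),\tr_{N_i})$, and the latter embeds trace-preservingly into the ultrapower $R^\omega$ of the hyperfinite $\mathrm{II}_1$-factor via the block-diagonal inclusions $M_{N_i}(\C)\hookrightarrow R$. Hence $L(G,K)$ is embeddable. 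The main obstacle, as indicated, is controlling the defect of the compressions $\tilde\pi_i$ on the small double cosets — precisely the difficulty that the renormalisation of Subsection~\ref{sub: positive functionals} and the spectral-approximation Theorem~\ref{thm:spectral_appr} were designed to handle.
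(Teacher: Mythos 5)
Your argument is correct and follows the same route as the paper: compress the representations $\pi_{\Gamma_i}$ to $\ell^2(\bar X_{e,i})$, pass to a tracial ultraproduct of matrix algebras, and use the Farber condition to control the defect of multiplicativity coming from the small double cosets. The paper declares the convergence of joint moments ``obvious from the constructions in the proof of Theorem~\ref{thm:spectral_appr}''; your decomposition into trace convergence plus the Hilbert--Schmidt estimate on $(1-P_{\Gamma_i}^\ast P_{\Gamma_i})\pi_{\Gamma_i}(1_{KgK})P_{\Gamma_i}^\ast$ supplies precisely the detail that this remark suppresses.
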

\begin{proof}
The proof is a side-product of the techniques that were developed in order to prove Theorem \ref{thm:spectral_appr}. Indeed, for any $T \in \cH(G,K) \subset L(G,K)$, we consider the sequence of matrices $P_{\Gamma_i}\pi_i(T)P_{\Gamma_i} \in B(\ell^2(\bar X_{e,i}))$. Note that the operator norm of $P_{\Gamma_i}\pi_i(T)P_{\Gamma_i}$ is bounded independent of $i \in \mathbb N$. It remains to show that for any finite list $T_1,\dots,T_k \in \cH(G,K)$ the joint moments of $P_{\Gamma_i}\pi_i(T_1)P_{\Gamma_i},\dots,P_{\Gamma_i}\pi_i(T_k)P_{\Gamma_i}$ converge to the joint moments of $T_1,\dots,T_k$ as the parameter $i$ tends to infinity. However, this is obvious from the constructions in the proof of Theorem \ref{thm:spectral_appr}. 
Now, this implies that
$$\mathcal H(G,K) \ni T \mapsto (P_{\Gamma_i}\pi_i(T)P_{\Gamma_i})_{i \in \mathbb N} \in \prod_{i \in \mathbb N}B\left(\ell^2(\bar X_{e,i})\right)$$
induces a trace-preserving embedding of $\mathcal H(G,K)$ into an von Neumann algebraic ultra-product of matrix algebras. This finishes the proof. 
\end{proof}

\appendix

\section{Invariant random subgroups}\label{sec: chabauty}

The set $\sub_G$ of closed subgroups of a locally compact group $G$ carries the \emph{Chabauty topology} which is generated by two types of sets, namely  
    \begin{align*}
       O_1(C)&=\{H\in\sub_G\mid H\cap C=\emptyset\},~~\text{$C\subset G$ compact,}\\
	   O_2(U)&=\{H\in\sub_G\mid H\cap U\ne\emptyset\},~~\text{$U\subset G$ open.} 
	\end{align*}

Let us now justify the results that we claimed in the introduction. Proposition~\ref{prop: on Farber vs IRS} in the introduction is a consequence of the following lemma. 

\begin{lemma}
	A sequence $(\Gamma_i)_{i \in \mathbb N}$ of lattices in $G$ converges as invariant random subgroups to the trivial invariant random subgroup $\delta_e$ if and only if for every compact open subgroup $K<G$ and for every right coset $C\ne K$ of $K$ the probability that a conjugate of $g\Gamma_ig^{-1}$ meets $C$ tends to zero for $i\to\infty$. 
\end{lemma}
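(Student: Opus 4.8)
The plan is to translate the weak convergence $\mu_{\Gamma_i}\to\delta_e$ in $\irs_G$ into a statement tested on a convenient neighbourhood basis of the trivial subgroup $\{e\}\in\sub_G$, and then to match that basis against the coset condition in the two directions.

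\emph{Step 1: a neighbourhood basis of $\{e\}$ and a convergence criterion.} A subbasic Chabauty-open set is of the form $O_1(C)$ ($C\subset G$ compact) or $O_2(U)$ ($U\subset G$ open), so a basic open set is $O_1(C)\cap O_2(U_1)\cap\cdots\cap O_2(U_k)$. For such a set to contain $\{e\}$ one needs $e\notin C$ and $e\in U_j$ for all $j$; but then every $H\in O_1(C)$ automatically lies in each $O_2(U_j)$ because $e\in H\cap U_j$, so $\{e\}\in O_1(C)\subseteq O_1(C)\cap\bigcap_j O_2(U_j)$. Hence $\{\,O_1(C)\mid C\subset G\ \text{compact},\ e\notin C\,\}$ is a neighbourhood basis of $\{e\}$. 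Since $\sub_G$ is compact metrisable and $\delta_e(O_1(C))=1$ whenever $e\notin C$, the Portmanteau theorem shows that $\mu_{\Gamma_i}\to\delta_e$ is \emph{equivalent} to $\mu_{\Gamma_i}(O_1(C))\to 1$ for every compact $C$ with $e\notin C$.

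\emph{Step 2: convergence $\Rightarrow$ coset condition.} Let $K<G$ be compact open and let $C=Kg\ne K$ be a right coset, so $g\notin K$, whence $e\notin C$, and $C$ is compact (a translate of $K$). The set $F:=\{H\in\sub_G\mid H\cap C\ne\emptyset\}=\sub_G\setminus O_1(C)$ is Chabauty-closed and $\{e\}\notin F$. The probability that a conjugate $g\Gamma_ig^{-1}$ meets $C$ is exactly $\mu_{\Gamma_i}(F)$, and Portmanteau gives $\limsup_i\mu_{\Gamma_i}(F)\le\delta_e(F)=0$, as desired.

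\emph{Step 3: coset condition $\Rightarrow$ convergence.} Fix a compact $C$ with $e\notin C$. Since $G$ is totally disconnected and $C$ is closed with $e\notin C$, choose (van Dantzig) a compact open subgroup $K<G$ with $K\cap C=\emptyset$. Then $C$ is covered by finitely many right cosets $Kg_1,\dots,Kg_n$ of $K$, and none of them equals $K$ since a coset meeting $C$ cannot be $K$. Thus $O_1(C)\supseteq\bigcap_{j=1}^n O_1(Kg_j)$, so
\[
1-\mu_{\Gamma_i}\bigl(O_1(C)\bigr)\ \le\ \mu_{\Gamma_i}\Bigl(\textstyle\bigcup_{j=1}^{n}\{H\in\sub_G\mid H\cap Kg_j\ne\emptyset\}\Bigr)\ \le\ \sum_{j=1}^{n}\mu_{\Gamma_i}\bigl(\{H\in\sub_G\mid H\cap Kg_j\ne\emptyset\}\bigr),
\]
and each summand is the probability that a conjugate of $\Gamma_i$ meets the coset $Kg_j\ne K$, which tends to $0$ by hypothesis. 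Hence $\mu_{\Gamma_i}(O_1(C))\to 1$ for every such $C$, which by Step 1 is precisely $\mu_{\Gamma_i}\to\delta_e$.

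I expect the only genuine subtlety to be Step 1 — verifying that the sets $O_1(C)$ with $e\notin C$ really form a neighbourhood basis of $\{e\}$ (so that the $O_2$-parts of basic open sets are harmless) and that this basis is enough to test weak convergence to $\delta_e$. The remaining work is routine topological bookkeeping: cosets of a compact open subgroup are compact open, compact subgroups are closed, and the finite covering of $C$ by cosets. Everything else follows by Portmanteau.
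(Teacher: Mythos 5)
Your proof is correct and follows essentially the same route as the paper: Portmanteau reduces weak convergence to $\delta_e$ to a test on a neighbourhood basis of $\{e\}$, one direction uses that the cosets give rise to clopen subsets of $\sub_G$, and the other covers a compact set by finitely many $K$-cosets. Your Step~1, observing that $O_2(U)=\sub_G$ whenever $e\in U$ (so the $O_1(C)$ with $e\notin C$ already form a neighbourhood basis), is a tidy way to package what the paper does by treating the two subbasic cases separately, and your explicit use of van Dantzig to arrange $K\cap C=\emptyset$ in Step~3 handles a small imprecision in the paper's phrasing, but neither changes the substance of the argument.
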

	
\begin{proof}
    First we show the $\Leftarrow$-statement. 
	By the Portmanteau theorem weak convergence $\mu_{\Gamma_i}\to \delta_{\{e\}}$ 
	is equivalent to $\limsup_{i\to\infty}\mu_{\Gamma_i}(F)\le \delta_{\{e\}}(F)$ for 
	every closed $F\subset\sub_G$, and equivalent to $\liminf_{i\to\infty}\mu_{\Gamma_i}(V)\ge \delta_{\{e\}}(V)$ for 
	every open $V\subset \sub_G$. Let $K<G$ be a compact open subgroup. 
	It suffices to show that 
	\begin{equation}\label{eq: inequality in weak convergence}
	\liminf_{i\to\infty}\mu_{\Gamma_i}(V)\ge \delta_{\{e\}}(V) 
	\end{equation}
	for the elements $V$ of a subbasis of the Chabauty topology. To show~\eqref{eq: inequality in weak convergence} we may assume that $\{e\}\in V$. Consider first the case $V=O_2(U)$ for some 
	open $U\subset G$. Then $\{e\}\in V$ means 
	$e\in U$ and hence $\mu_{\Gamma_i}(V)=1$ for all $i\in\N$. Now let $V=O_1(C)$ for some compact $C\subset G$. 
	Then $\{e\}\in V$ means $e\not\in C$. 
	We can find a finite number of cosets $h_kK\ne K$ such that $C$ is contained in their union. By the assumption, 
		\[\lim_{i\to\infty} \mu_{\Gamma_i}(O_1(h_kK))=1\] 
	for every $k$. Hence 
		\[\lim_{i\to\infty} \mu_{\Gamma_i}\bigl(\bigcap_k O_1(h_kK)\bigr)=1\] 
	for the finite intersection. Since $\bigcap_k O_1(h_kK)$ is contained in $V$ we obtain~\eqref{eq: inequality in weak convergence} in this case. 
		
	Next we show the $\Rightarrow$-direction. Assume that $\mu_{\Gamma_i}\to \delta_{\{e\}}$ in $\irs_G$. 
	Let $K<G$ be a compact and open subgroup. For every coset $hK\ne K$ 
	the subset $O_2(hK)$ is closed and open in the Chabauty topology. 
	Let $h\not\in K$. From $O_2(hK)$ being closed and weak convergence we deduce that 
	\[ \limsup_{i\to\infty}\mu_{\Gamma_i}\bigl(\{H\mid H\cap hK\ne\emptyset\}\bigr)=\limsup_{i\to\infty} \mu_{\Gamma_i}(O_2(hK))\le \delta_{\{e\}}(O_2(hK))=0.\qedhere\]
\end{proof}

\begin{proof}[Proof of Proposition~\ref{prop: on Farber vs IRS}]
Suppose that the sequence $(\Gamma_i)$ is weakly uniformly discrete and 
$\mu_{\Gamma_i}\to \delta_{\{e\}}$. By the previous lemma it suffices to show that 
for a compact-open subgroup $K<G$ we have 
$\mu_{\Gamma_i}(\{H\mid H\cap K\ne \{e\}\})\to 0$ for $i\to \infty$. Suppose this is not true. Then there is $\epsilon>0$ such that upon passing to a subsequence of lattices 
\[ \mu_{\Gamma_i}(\{H\mid H\cap K\ne \{e\}\})>\epsilon\]
for all $i\in\N$. By weak uniform discreteness there is a compact-open subgroup 
$K_1<G$ such that  
\[ \mu_{\Gamma_i}(\{H\mid H\cap K_1\ne \{e\}\})<\epsilon/2\]
for all $i\in\N$. This implies that 
\[ \mu_{\Gamma_i}(\{H\mid H\cap K\cap (G\backslash K_1)\ne \emptyset\})\ge \epsilon/2\]
for all $i\in\N$ which contradicts the assumption that $\mu_{\Gamma_i}\to \delta_{\{e\}}$ since $K\cap (G\backslash K_1)$ is closed and $\delta_{\{e\}}(K\cap (G\backslash K_1))=0$. 

Next suppose that $(\Gamma_i)$ is a Farber sequence. The previous lemma immediately implies that $\mu_{\Gamma_i}\to \delta_{\{e\}}$. It suffices to show that 
$(\Gamma_i)$ is weakly uniformly discrete. Let $\epsilon>0$. Let $K<G$ be a compact-open subgroup. Then there is $i_0\in\N$ such that 
\[\mu_{\Gamma_i}(\{H\mid H\cap K\ne \{e\}\})<\epsilon\]
for $i>i_0$. By discreteness of each $\Gamma_i$ there is a compact-open subgroup 
$K_1<G$ such that $\mu_{\Gamma_i}(\{H\mid H\cap K_1\ne \{e\}\})<\epsilon$ for 
every $i\in\{1,\dots, i_0\}$. So the compact-open subgroup $K\cap K_1$ satisfies $\mu_{\Gamma_i}(\{H\mid H\cap (K\cap K_1)\ne \{e\}\})<\epsilon$ for every $i\in\N$. 
\end{proof}

The following lemma clarifies why our main result is a generalisation of L\"uck's approximation theorem.

\begin{lemma}\label{lem: residual chain is farber}
Let $\Gamma$ be a lattice in $G$ and $(\Gamma_i)_{i \in \mathbb N}$ be a chain of finite-index normal subgroups of $\Gamma$ such that $\bigcap_{i \in \mathbb N} \Gamma_i = \{e\}$. Then, $(\Gamma_i)_{i \in \mathbb N}$ is a Farber sequence, seen as a sequence of lattices in $G$.
\end{lemma}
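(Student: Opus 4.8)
The plan is to verify the Farber condition of Definition~\ref{defn: farber condition} directly, by transporting the probabilities appearing in it from the varying spaces $G/\Gamma_i$ down to the single finite-measure space $G/\Gamma$, where the hypothesis $\bigcap_i\Gamma_i=\{e\}$ can be exploited through an elementary compactness argument. As is implicit in the word ``chain'', I assume $\Gamma_1\supseteq\Gamma_2\supseteq\cdots$; since each $\Gamma_i$ has finite index in the lattice~$\Gamma$, it is itself a lattice in~$G$. Fix a compact open subgroup $K<G$ and a right coset $C=Kt$ of~$K$, and let $m_i$, $m$ denote the Haar probability measures on $G/\Gamma_i$, $G/\Gamma$. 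We must show $m_i\bigl(\{g\Gamma_i\mid g\Gamma_ig^{-1}\cap(C\setminus\{e\})\neq\emptyset\}\bigr)\to 0$.

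The first step is a descent to $G/\Gamma$. For $\delta\in\Gamma$ one has $(g\delta)\Gamma_i(g\delta)^{-1}=g(\delta\Gamma_i\delta^{-1})g^{-1}=g\Gamma_ig^{-1}$, using that $\Gamma_i$ is normal in~$\Gamma$; hence $g\Gamma_ig^{-1}\in\sub_G$ depends only on the coset $g\Gamma$, so the orbit map descends to a continuous map $G/\Gamma\to\sub_G$, and the event ``$g\Gamma_ig^{-1}\cap(C\setminus\{e\})\neq\emptyset$'' is constant along the fibres of the finite covering $p_i\colon G/\Gamma_i\to G/\Gamma$. Since $p_i$ pushes $m_i$ forward to $m$, the probability above equals $m(A_i)$, where
\[
A_i:=\bigl\{\,g\Gamma\in G/\Gamma \ \big|\ \text{there is }\gamma\in\Gamma_i\setminus\{e\}\text{ with }g\gamma g^{-1}\in C\,\bigr\}.
\]
This $A_i$ is Borel, being the preimage under the above map $G/\Gamma\to\sub_G$ of the set $\{H\mid H\cap(C\setminus\{e\})\neq\emptyset\}$ (which is open if $e\in C$, since then $C=K$ and $C\setminus\{e\}$ is open, and closed if $e\notin C$, since then $C\setminus\{e\}=C$ is compact).

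It remains to show $m(A_i)\to 0$, and here I would use continuity from above of the finite measure~$m$. The sets $A_i$ decrease with $i$ because $\Gamma_{i+1}\subseteq\Gamma_i$, so it suffices to prove $\bigcap_iA_i=\emptyset$. If $g\Gamma$ lay in every $A_i$, then for each $i$ there would be $\gamma_i\in\Gamma_i\setminus\{e\}$ with $g\gamma_ig^{-1}\in C$, and all these elements lie in $C\cap g\Gamma g^{-1}$, which is finite, being the intersection of the compact set $C=Kt$ with the discrete set $g\Gamma g^{-1}$. By the pigeonhole principle some fixed $\gamma\in\Gamma\setminus\{e\}$ would satisfy $g\gamma g^{-1}\in C$ and $\gamma=\gamma_i\in\Gamma_i$ for infinitely many~$i$; as the $\Gamma_i$ decrease this forces $\gamma\in\bigcap_i\Gamma_i=\{e\}$, a contradiction. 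Hence $m(A_i)\to m(\emptyset)=0$, and since $K$ and $C$ were arbitrary, $(\Gamma_i)_{i\in\N}$ is a Farber sequence.

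The step needing care — and the one in which normality of the $\Gamma_i$ is genuinely used — is the descent of the second paragraph: without normality the fibre of $p_i$ over $g\Gamma$ carries the distinct conjugates $g\delta\Gamma_i\delta^{-1}g^{-1}$ for $\delta$ ranging over $\Gamma/\Gamma_i$, the event is no longer fibrewise constant, and one is thrown back onto the genuinely harder non-normal Farber phenomenon. The remaining ingredients — that $p_i$ is measure-preserving after normalisation, and continuity from above — are routine.
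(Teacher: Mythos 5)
Your proof is correct and follows essentially the same route as the paper: use normality of $\Gamma_i$ in $\Gamma$ to descend the conjugation map to $G/\Gamma$, then a monotone convergence argument on a sequence of measurable sets $A_i$, with the pointwise limit secured by compactness of $C$, discreteness of $g\Gamma g^{-1}$, and the hypothesis $\bigcap_i\Gamma_i=\{e\}$. The one difference worth noting is that your version is slightly more careful: you verify the Farber condition for an arbitrary right coset $C=Kt$, whereas the paper's proof only writes out the case $C=K$ (defining $A_i=\{g\Gamma\mid g\Gamma_ig^{-1}\cap K=\{e\}\}$ and using continuity from below); the general coset requires the finite-intersection/pigeonhole step you supplied, since there is no longer a single ``good'' element $e$ to land on. Your use of continuity from above on a decreasing sequence with empty intersection is, of course, equivalent to the paper's continuity from below on the complementary increasing sequence when $C=K$.
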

\begin{proof} Let $K$ be a compact open subgroup of $G$. Note that since $\Gamma_i \subseteq \Gamma$ is normal, conjugation of $\Gamma_i$ by $g\Gamma \in G/\Gamma$ is well-defined and allows to understand the behaviour of random conjugates of $\Gamma_i$.
Consider the sequence of measurable subsets $A_i \subseteq  G/\Gamma$, defined as
$$A_i := \{g\Gamma \in G/\Gamma \mid g\Gamma_i g^{-1} \cap K = \{e\} \}.$$ Then, $A_i$ is a monotone increasing sequence of sets and by our assumption, the union of the $A_i$'s is equal to $G/\Gamma$. Now, for every $\varepsilon>0$, by $\sigma$-additivity of the Haar measure, there exists some $n \in \mathbb N$ such that for all $i \geq n$ we have $\nu(A_i) \geq (1- \varepsilon)\covol_{\nu}(\Gamma)$. Thus, $(\Gamma_i)_{i \in \mathbb N}$ is a Farber sequence. This finishes the proof.
\end{proof}

%

For semisimple algebraic groups there might be no difference between 
convergence as invariant random subgroups and the property of being a Farber sequence. See Remark~\ref{rem: weak uniform discreteness} in connection 
with Proposition~\ref{prop: on Farber vs IRS}. However, for arbitrary totally disconnected groups there is a difference: There are easy examples of a sequence of lattices $(\Gamma_i)_{i \in \mathbb N}$ with increasing covolumes, which converges to the trivial subgroup as a sequence of invariant random subgroups but do not form a Farber sequence. Indeed, just take $G= \prod_{n \in \mathbb N} (\mathbb Z/2 \mathbb Z)$ and $\Gamma_i = \prod_{n=i}^{2i} (\mathbb Z/2 \mathbb Z)$.

\section*{Acknowledgments}
We thank the referee for a simplification in the proof of Theorem~\ref{thm: computation for lattices}. 

First results were obtained during a visit of H.P. in Leipzig in 2012 -- he thanks University of Leipzig for its hospitality. The authors presented some of the results of this paper at a Spring School in 2015 in Sde Boker -- we greatly enjoyed this meeting and thank the organisers for their encouragement. R.S. thanks Uri Bader and Adrien le Boudec for discussions on Section~\ref{sub: neretin} and Arie Levit for discussions on Section~\ref{sec: chabauty}. We thank Jean Raimbault and Damien Gaboriau for helpful comments on a previous version of this manuscript. A.~Thom was supported by ERC Starting Grant No.\ 277728 and ERC Consolidator Grant No.\ 681207, and R.~Sauer was supported by DFG Grant 1661/4. 

\begin{bibdiv} 
\begin{biblist}

\bib{7sam_announce}{article}{
   author={Ab\'ert, Miklos},
   author={Bergeron, Nicolas},
   author={Biringer, Ian},
   author={Gelander, Tsachik},
   author={Nikolov, Nikolay},
   author={Raimbault, Jean},
   author={Samet, Iddo},
   title={On the growth of Betti numbers of locally symmetric spaces},
   language={English, with English and French summaries},
   journal={C. R. Math. Acad. Sci. Paris},
   volume={349},
   date={2011},
   number={15-16},
   pages={831--835},
}

\bib{7sam}{article}{
   author={Ab\'ert, Miklos},
   author={Bergeron, Nicolas},
   author={Biringer, Ian},
   author={Gelander, Tsachik},
   author={Nikolov, Nikolay},
   author={Raimbault, Jean},
   author={Samet, Iddo},
   title={On the growth of $L^2$-invariants for sequences of lattices in Lie groups},
   date={2012},
   note={ArXiv~1210.2961}	
 }
 \bib{bader+caprace+gelander+mozes}{article}{
   author={Bader, Uri},
   author={Caprace, Pierre-Emmanuel},
   author={Gelander, Tsachik},
   author={Mozes, Shahar},
   title={Simple groups without lattices},
   journal={Bull. Lond. Math. Soc.},
   volume={44},
   date={2012},
   number={1},
   pages={55--67},
}
\bib{bader+furman+sauer}{article}{
   author={Bader, Uri},
   author={Furman, Alex},
   author={Sauer, Roman},
   title={Weak notions of normality and vanishing up to rank in $L^2$-cohomology},
   journal={Int. Math. Res. Not. IMRN},
   date={2014},
   number={12},
   pages={3177--3189},
}

\bib{MR2059438}{article}{
   author={Bergeron, N.},
   author={Gaboriau, D.},
   title={Asymptotique des nombres de Betti, invariants $l^2$ et
   laminations},
   language={French, with English summary},
   journal={Comment. Math. Helv.},
   volume={79},
   date={2004},
   number={2},
   pages={362--395},
   issn={0010-2571},
}
\bib{bjoerner}{book}{
   author={Bj{\"o}rner, Anders},
   author={Brenti, Francesco},
   title={Combinatorics of Coxeter groups},
   series={Graduate Texts in Mathematics},
   volume={231},
   publisher={Springer, New York},
   date={2005},
   pages={xiv+363},
}

\bib{borel+wallach}{book}{
    AUTHOR = {Borel, Armand},
    author={Wallach, Nolan R.},
     TITLE = {Continuous cohomology, discrete subgroups, and representations
              of reductive groups},
    SERIES = {Annals of Mathematics Studies},
    VOLUME = {94},
 PUBLISHER = {Princeton University Press},
   ADDRESS = {Princeton, N.J.},
      YEAR = {1980},
     PAGES = {xvii+388},
      }

\bib{bott}{incollection}{
   author={Bott, Raoul},
   title={The geometry and representation theory of compact Lie groups},
   series={London Mathematical Society Lecture Note Series},
   volume={34},
   booktitle={Proceedings of the SRC/LMS Research Symposium held in Oxford,
   June 28--July 15, 1977},
   editor={Luke, G. L.},
   publisher={Cambridge University Press, Cambridge-New York},
   date={1979},
}

\bib{brown}{book}{
   author={Brown, Kenneth S.},
   title={Cohomology of groups},
   series={Graduate Texts in Mathematics},
   volume={87},
   note={Corrected reprint of the 1982 original},
   publisher={Springer-Verlag, New York},
   date={1994},
   pages={x+306},
  }
 \bib{bux}{article}{
   author={Bux, Kai-Uwe},
   author={Wortman, Kevin},
   title={Finiteness properties of arithmetic groups over function fields},
   journal={Invent. Math.},
   volume={167},
   date={2007},
   number={2},
   pages={355--378},
}

\bib{carter}{book}{
   author={Carter, Roger W.},
   title={Simple groups of Lie type},
   series={Wiley Classics Library},
   note={Reprint of the 1972 original;
   A Wiley-Interscience Publication},
   publisher={John Wiley \& Sons, Inc., New York},
   date={1989},
   pages={x+335},
}

\bib{davisetal}{article}{
   author={Davis, Michael W.},
   author={Dymara, Jan},
   author={Januszkiewicz, Tadeusz},
   author={Okun, Boris},
   title={Weighted $L^2$-cohomology of Coxeter groups},
   journal={Geom. Topol.},
   volume={11},
   date={2007},
   pages={47--138},
}
\bib{davis-book}{book}{
   author={Davis, Michael W.},
   title={The geometry and topology of Coxeter groups},
   series={London Mathematical Society Monographs Series},
   volume={32},
   publisher={Princeton University Press, Princeton, NJ},
   date={2008},
   pages={xvi+584},
}

\bib{dymara}{article}{
   author={Dymara, Jan},
   title={Thin buildings},
   journal={Geom. Topol.},
   volume={10},
   date={2006},
   pages={667--694},
}
\bib{farber}{article}{
   author={Farber, Michael},
   title={Geometry of growth: approximation theorems for $L^2$
   invariants},
   journal={Math. Ann.},
   volume={311},
   date={1998},
   number={2},
   pages={335--375},
}

\bib{MR1953191}{article}{
   author={Gaboriau, D.},
   title={Invariants $l^2$ de relations d'\'equivalence et de groupes},
   language={French},
   journal={Publ. Math. Inst. Hautes \'Etudes Sci.},
   number={95},
   date={2002},
   pages={93--150},
   issn={0073-8301},
}

\bib{MR2221157}{article}{
   author={Gaboriau, D.},
   title={Invariant percolation and harmonic Dirichlet functions},
   journal={Geom. Funct. Anal.},
   volume={15},
   date={2005},
   number={5},
   pages={1004--1051},
   issn={1016-443X},
}

\bib{gelander}{article}{
	author={Gelander, T.}, 
	title={Kazhdan-Margulis theorem for invariant random subgroups},
	date={2016},
	note={Preprint, arXiv:1510.05423}
}

\bib{gelander+levit}{article}{
	author={Gelander, T.}, 
	author={Levit, A.}, 
	title={Invariant random subgroups in totally algebraic groups over non-Archimedean local fields},
	date={2015},
}
\bib{harder}{article}{
   author={Harder, G.},
   title={Die Kohomologie $S$-arithmetischer Gruppen \"uber Funktionenk\"orpern},
   language={German},
   journal={Invent. Math.},
   volume={42},
   date={1977},
   pages={135--175},
}
				
\bib{kyed+petersen+vaes}{article}{
   author={Kyed, David},
   author={Petersen, Henrik Densing},
   author={Vaes, Stefaan},
   title={$L^2$-Betti numbers of locally compact groups and their cross
   section equivalence relations},
   journal={Trans. Amer. Math. Soc.},
   volume={367},
   date={2015},
   number={7},
   pages={4917--4956},
}

\bib{Lu94}{article}{
   author={L{\"u}ck, Wolfgang},
   title={Approximating $L^2$-invariants by their finite-dimensional
   analogues},
   journal={Geom. Funct. Anal.},
   volume={4},
   date={1994},
   number={4},
   pages={455--481},
}
\bib{lueck-survey}{article}{
   author={L{\"u}ck, Wolfgang},
   title={Survey on classifying spaces for families of subgroups},
   conference={
      title={Infinite groups: geometric, combinatorial and dynamical
      aspects},
   },
   book={
      series={Progr. Math.},
      volume={248},
      publisher={Birkh\"auser, Basel},
   },
   date={2005},
   pages={269--322}
}
\bib{lueck-book}{book}{
   author={L{\"u}ck, Wolfgang},
   title={$L^2$-invariants: theory and applications to geometry and
   $K$-theory},
   series={Ergebnisse der Mathematik und ihrer Grenzgebiete. 3. Folge. A
   Series of Modern Surveys in Mathematics [Results in Mathematics and
   Related Areas. 3rd Series. A Series of Modern Surveys in Mathematics]},
   volume={44},
   publisher={Springer-Verlag, Berlin},
   date={2002},
   pages={xvi+595},
}

\bib{pestov}{article}{
   author={Pestov, Vladimir G.},
   title={Hyperlinear and sofic groups: a brief guide},
   journal={Bull. Symbolic Logic},
   volume={14},
   date={2008},
   number={4},
   pages={449--480},
   issn={1079-8986},
}

\bib{Pe11}{thesis}{
   author={Petersen, Henrik Densing},
   title={$L^2$-Betti numbers of locally compact groups},
   status={Ph.D. thesis, Department of Mathematical Sciences, Faculty of Science, University of Copenhagen, 2012. 139 p.},
}

\bib{MR123}{article}{
   author={Petersen, Henrik Densing},
   title={$L^2$-Betti numbers of locally compact groups},
   language={English, with English and French summaries},
   journal={C. R. Math. Acad. Sci. Paris},
   volume={351},
   date={2013},
   number={9-10},
   pages={339--342},
}

\bib{MR3158720}{article}{
   author={Petersen, Henrik Densing},
   author={Valette, Alain},
   title={$L^2$-Betti numbers and Plancherel measure},
   journal={J. Funct. Anal.},
   volume={266},
   date={2014},
   number={5},
   pages={3156--3169},
}

\bib{MR2827095}{article}{
   author={Peterson, Jesse},
   author={Thom, Andreas},
   title={Group cocycles and the ring of affiliated operators},
   journal={Invent. Math.},
   volume={185},
   date={2011},
   number={3},
   pages={561--592},
}
\bib{prasad}{article}{
   author={Prasad, Gopal},
   title={Volumes of $S$-arithmetic quotients of semi-simple groups},
   note={With an appendix by Moshe Jarden and the author},
   journal={Inst. Hautes \'Etudes Sci. Publ. Math.},
   number={69},
   date={1989},
   pages={91--117},
}
\bib{golsefidy}{article}{
   author={Salehi Golsefidy, Alireza},
   title={Lattices of minimum covolume in Chevalley groups over local fields
   of positive characteristic},
   journal={Duke Math. J.},
   volume={146},
   date={2009},
   number={2},
   pages={227--251},
}

\bib{sauer-groupoids}{article}{
   author={Sauer, Roman},
   title={$L^2$-Betti numbers of discrete measured groupoids},
   journal={Internat. J. Algebra Comput.},
   volume={15},
   date={2005},
   number={5-6},
   pages={1169--1188},
}

\bib{sauer-survey}{article}{
   author={Sauer, Roman},
   title={$\ell^2$-Betti numbers of discrete and non-discrete groups},
   book={
          title={New directions in locally compact groups},
          series={London Mathematical Society Lecture Note Series},
          volume={447},
          publisher={Cambridge University Press},
          date={2018}
         },
   pages={205--226},
}

\bib{MR2572246}{article}{
   author={Sauer, Roman},
   title={Amenable covers, volume and $L^2$-Betti numbers of aspherical
   manifolds},
   journal={J. Reine Angew. Math.},
   volume={636},
   date={2009},
   pages={47--92},
}

\bib{sauer+schroedl}{article}{
   author={Sauer, Roman},
   author={Schr\"odl, Michael},
   title={Vanishing of $\ell^2$-Betti numbers of locally compact groups as an invariant of coarse equivalence},
   note={Preprint, arXiv:1702.01685}
   date={2017},
}

\bib{MR2650795}{article}{
   author={Sauer, Roman},
   author={Thom, Andreas},
   title={A spectral sequence to compute $L^2$-Betti numbers of groups
   and groupoids},
   journal={J. Lond. Math. Soc. (2)},
   volume={81},
   date={2010},
   number={3},
   pages={747--773},
}

\bib{sauer+thumann}{article}{
   author={Sauer, Roman},
   author={Thumann, Werner},
   title = {Topological Models of Finite Type for Tree Almost Automorphism Groups},
   journal = {International Mathematics Research Notices},  
   date = {2016}, 
   doi = {10.1093/imrn/rnw232}, 
}
%
%
	\bib{takesaki}{book}{
   author={Takesaki, M.},
   title={Theory of operator algebras. I},
   series={Encyclopaedia of Mathematical Sciences},
   volume={124},
   note={Reprint of the first (1979) edition;
   Operator Algebras and Non-commutative Geometry, 5},
   publisher={Springer-Verlag, Berlin},
   date={2002},
   pages={xx+415},
}
\bib{thom-rank}{article}{
   author={Thom, Andreas},
   title={$L^2$-invariants and rank metric},
   conference={
      title={$C^\ast$-algebras and elliptic theory II},
   },
   book={
      series={Trends Math.},
      publisher={Birkh\"auser, Basel},
   },
   date={2008},
   pages={267--280},
}
\bib{MR2399103}{article}{
   author={Thom, Andreas},
   title={$L^2$-cohomology for von Neumann algebras},
   journal={Geom. Funct. Anal.},
   volume={18},
   date={2008},
   number={1},
   pages={251--270},
}

\bib{MR2486803}{article}{
   author={Thom, Andreas},
   title={Low degree bounded cohomology and $L^2$-invariants for
   negatively curved groups},
   journal={Groups Geom. Dyn.},
   volume={3},
   date={2009},
   number={2},
   pages={343--358},
}
\bib{tomdieck}{book}{
      author={tom Dieck, Tammo},
       title={Transformation groups},
      series={de Gruyter Studies in Mathematics},
   publisher={Walter de Gruyter \& Co., Berlin},
        date={1987},
      volume={8},
      
}

\end{biblist}
\end{bibdiv}
\end{document}